\documentclass[12pt]{amsart}
\usepackage{amsmath,amsthm,amssymb,amsfonts,enumerate,color}
\usepackage{amsmath, amsthm, amsfonts, amssymb, color}
\usepackage{mathrsfs}
\usepackage{amsmath, amsthm, amsfonts, amssymb, color}
\usepackage{mathrsfs}
\usepackage{amsfonts, amsmath}
\usepackage{amsmath,amstext,amsthm,amssymb,amsxtra}
\usepackage{txfonts} 
\allowdisplaybreaks
 \usepackage{pgf,tikz}
\usepackage{stmaryrd}
 \textwidth =165mm
 \textheight =230mm
 \oddsidemargin 2mm
 \evensidemargin 2mm
\setlength{\topmargin}{-0.2cm}

\begin{document}




\newcommand{\norm}[1]{\left\Vert#1\right\Vert}
\newcommand{\abs}[1]{\left\vert#1\right\vert}
\newcommand{\set}[1]{\left\{#1\right\}}
\newcommand{\Real}{\mathbb{R}}
\newcommand{\RR}{\mathbb{R}^n}
\newcommand{\supp}{\operatorname{supp}}
\newcommand{\card}{\operatorname{card}}
\renewcommand{\L}{\mathcal{L}}
\renewcommand{\P}{\mathcal{P}}
\newcommand{\T}{\mathcal{T}}
\newcommand{\A}{\mathbb{A}}
\newcommand{\K}{\mathcal{K}}
\renewcommand{\S}{\mathcal{S}}
\newcommand{\blue}[1]{\textcolor{blue}{#1}}
\newcommand{\red}[1]{\textcolor{red}{#1}}
\newcommand{\Id}{\operatorname{I}}

\newtheorem{thm}{Theorem}[section]
\newtheorem{prop}[thm]{Proposition}
\newtheorem{cor}[thm]{Corollary}
\newtheorem{lem}[thm]{Lemma}
\newtheorem{lemma}[thm]{Lemma}
\newtheorem{exams}[thm]{Examples}
\theoremstyle{definition}
\newtheorem{defn}[thm]{Definition}
\newtheorem{rem}[thm]{Remark}

\numberwithin{equation}{section}

\title[Temperature  of Schr\"odinger operators with initial data in  BMO spaces]
{Characterization of temperatures associated to Schr\"odinger operators with initial data in BMO spaces}

 \author[M.H. Yang and C. Zhang]{Minghua Yang\  and\ Chao Zhang}

 \address{Department of Mathematics\\Jiangxi University of Finance and Economics \\
         Nanchang 330032, PR China}
 \email{ymh20062007@163.com}

 \address{School of Statistics and Mathematics \\
             Zhejiang Gongshang University \\
             Hangzhou 310018, PR China}
 \email{zaoyangzhangchao@163.com}

 \subjclass[2010]{42B35, 42B37, 35J10,  47F05}
\keywords{heat equation,    Schr\"odinger operators,    BMO space, Carleson measure,
 reverse H\"older inequality,     Dirichlet problem.}

\begin{abstract}   Let $\L$ be a Schr\"odinger operator of the form $\L=-\Delta+V$ acting on $L^2(\mathbb R^n)$ where the nonnegative
potential $V$ belongs to  the reverse H\"older class  $B_q$ for some $q\ge n$. Let ${\rm BMO}_{{\mathcal{L}}}(\RR)$ denote the BMO space on $\RR$ associated to the Schr\"odinger operator $\L$.
In this article we will
show that a function  $f\in {\rm BMO}_{{\mathcal{L}}}(\RR)$ is the trace of the solution of
${\mathbb L}u:=u_{t}+\L u=0,  u(x,0)= f(x),$
 where $u$ satisfies  a Carleson-type condition
\begin{eqnarray*}
 \sup_{x_B, r_B}r_B^{-n}\int_0^{r_B^2}\int_{B(x_B, r_B)} \left\{ t|\partial_t   u(x,t)|^2+|\nabla_x   u(x,t)|^2 \right\}{dx dt } \leq C <\infty.
\end{eqnarray*}
Conversely, this Carleson-type  condition characterizes  all the ${\mathbb L}$-carolic functions whose traces belong to
the space ${\rm BMO}_{{\mathcal{L}}}(\RR)$.
This result extends the  analogous characterization found by Fabes and Neri in \cite{FN1}
for   the classical BMO space  of John and Nirenberg.
 \end{abstract}

\maketitle

\section{Introduction and statement of the main result}
Consider the Schr\"odinger  operator
\begin{equation}\label{e1.2}
\L  =-\Delta  +V(x) \ \ \ {\rm on} \ \ L^2(\RR), \ \ \ \  n\geq 3.
\end{equation}
Associated to the nonnegative  potential $V$, we   assume that it is not identically zero and that
 $
V\in B_q$  for  some $q> n/2$,
which by definition means that   $V\in L^{q}_{\rm loc}(\RR), V\geq 0$, and
there exists a  constant $C>0$ such that   the reverse H\"older inequality
\begin{equation}\label{e1.3}
\left(\frac{1}{\abs{B}}\int_BV(y)^q~dy\right)^{1/q}\leq\frac{C}{\abs{B}}\int_BV(y)~dy,
\end{equation}
holds for all   balls $B$ in $\RR.$ The operator   $\L$ is a self-adjoint
operator on $L^2(\RR)$. Hence $\L$ generates the $\L$-heat semigroup $$e^{-t{\L}}f(x)=\int_{\Real^n}\K_t(x,y)f(y)dy,\ f\in L^2(\RR),\ t>0.$$
From the Feynman-Kac formula, it is well-known that  the semigroup kernels ${\mathcal K}_t(x,y)$  of the operators $e^{-t{\L}}$ satisfies
\begin{eqnarray*}
0\leq {\mathcal K}_t(x,y)\leq h_t(x-y)
\end{eqnarray*}
for all $x,y\in\RR$ and $t>0$, where
$$
h_t(x)=(4\pi t)^{-\frac{n}{2}}e^{-\frac{|x|^2}{4t}}
$$
is the kernel of the classical heat semigroup
$\set{{T}_t}_{t>0}=\{e^{t\Delta}\}_{t>0}$ on $\Real^n$. For the classical heat semigroup associated with Laplacian, see \cite{St1970}.

It is well known that the BMO space, i.e. the space
of functions of bounded mean oscillation, is natural substitution to study singular integral at the end-point space $L^{ \infty}(\RR)$.
A celebrated   theorem of  Fefferman and Stein \cite{FS}   states    that
 a BMO function   is the trace of the solution of
 $\partial_{tt}u +\Delta u=0,  u(x,0)= f(x),$
 whenever $u$ satisfies
\begin{eqnarray}\label{ee1.1}
 \sup_{x_B, r_B} r_B^{-n}\int_0^{r_B}\int_{B(x_B, r_B)}|t\nabla  u(x,t)|^2 {dx dt\over t }  \leq C<\infty,
\end{eqnarray}
where { $\Delta=\sum_{i=1}^n\partial_{x_i}^2$ is the Laplace operator and } $\nabla=(\nabla_x, \partial_t)=(\partial_{1},...,\partial_{n}, \partial_t).$  Conversely,
   Fabes, Johnson and Neri \cite{FJN} showed that  condition  above
   characterizes  all the harmonic functions whose traces are in ${\rm BMO}(\RR)$ in 1976.
  The study of this topic has been widely
extended to more general operators such as elliptic operators and Schr\"odinger operators (instead of the Laplacian),  for more general initial data spaces such as Morrey spaces
and for domains other than $\mathbb R^n$ such as Lipschitz domains. For these generalizations,   see \cite{DKP, DYZ, FN1, FN,  HMM, Song}.

In \cite{FN1}, Fabes and Neri further generalized  the above characterization to caloric functions (temperature), that is the authors proved that a BMO function $f$ is the trace of the solution of

\begin{equation*}
\left\{
\begin{aligned}
\partial_{t}u-\Delta u=0&,\ \ \ \  \ \ \ \  x\in \RR, \ t>0, \\
u(x,0)= f(x)&, \ \ \ \ \ \ \ \ x\in \RR,\\
\end{aligned}
\right.
\end{equation*}
 whenever $u$ satisfies
\begin{eqnarray} \label{e1.1}
 \sup_{x_B, r_B} r_B^{-n}\int_0^{r_B^2}\int_{B(x_B, r_B)}  |\nabla_x  u(x,t)|^2 {dx dt }  \leq C<\infty,
\end{eqnarray}
 and, conversely,  the  condition \eqref{e1.1}
   characterizes  all the carolic functions whose traces are in ${\rm BMO}(\RR)$.
The authors in \cite{JX} explored more informations, related to harmonic functions and carolic functions, about this subject.

The main aim of this article is to study a similar characterization to \eqref{e1.1}  for the Schr\"odinger operator with some conditions on its potentials.
In this article, we consider the parabolic Schr\"odinger differential operators
$${\mathbb L}=\partial_{t}+{\L} ,$$
$t>0, x\in\RR$;  see,
for instance, \cite{GJ,TH} and references therein.
For $f\in  L^p(\RR)$, $1\leq p<  \infty,$
it is well known that $u(x,t)=e^{-t{\L}}f(x), t>0, x\in\RR$, is a solution to the heat equation
\begin{eqnarray}\label{el.4}
{\mathbb L}u=\partial_{t}u+{\L} u =0\ \ \ {\rm in }\ {\mathbb R}^{n+1}_+
\end{eqnarray}
with the boundary data $f\in  L^p(\RR)$, $1\leq p<  \infty.$
 The equation  ${\mathbb L}u =0$ is interpreted in the weak sense via a sesquilinear form, that is,
  $u\in {W}^{1, 2}_{{\rm loc}} ( {\mathbb R}^{n+1}_+) $ is a weak solution of ${\mathbb L}u =0$   if it satisfies
$$\int_{{\mathbb R}^{n+1}_+}
{\nabla_x}u(x,t)\cdot {\nabla_x}\psi(x,t)\,dxdt-\int_{{\mathbb R}^{n+1}_+} u(x,t) \partial_t\psi(x,t)dxdt+
 \int_{{\mathbb R}^{n+1}_+} V u\psi \,dxdt=0,\ \ \ \ \forall \psi\in C_0^{1}({\mathbb R}^{n+1}_+).
 $$
 In the sequel,   we call such a function $u$  an ${\mathbb L}$-carolic function associated to the operator ${\mathbb L}$.

As mentioned above,  we  are interested in  deriving the characterization of the solution
 to the equation $ {\mathbb L}u  =0$ in ${\mathbb R}^{n+1}_+$ having boundary values with BMO data.
Following \cite{DGMTZ},   a locally integrable function $f$ belongs to  BMO$_{{\mathcal{L}}}({\mathbb R}^n)$
whenever there is constant $C\geq 0$ so that
\begin{equation} \label{e1.5}
  {1\over |B|}\int_{B}|f(y)- f_{B}|dy\leq C
  \end{equation}
  for every ball $B=B(x, r)$, and
\begin{equation}\label{e1.6}
      {1\over |B|}\int_{B}|f(y)|dy\leq C
\end{equation}
 for every  ball   $B=B(x,r)$ with  $r\geq\rho(x)$.   Here $f_B=|B|^{-1}\int_B f(x) dx$ and
the critical radii above are determined by the function $\rho(x; V)=\rho(x)$ which takes the explicit form
\begin{equation}\label{e1.7}
 \rho(x)=\sup \Big{\{} r>0: \ {1\over r^{n-2}} \int_{B(x, r)} V(y)dy \leq 1 \Big{\}}.
\end{equation}
We define $\|f\|_{{\rm BMO}_{\mathcal{L}}(\RR)}$ to be the smallest
 $C$  in the right hand sides of \eqref{e1.5} and \eqref{e1.6}.
Because of \eqref{e1.6}, this ${\rm BMO}_{{\mathcal{L}}}(\RR)$ space is in fact a proper subspace of  the classical BMO space of John and Nirenberg,
 and it  turns out to be a suitable space in studying the case of the end-point estimates for $p=\infty$ concerning  the boundedness of
 some classical operators associated to $\L$
 such as the Littlewood-Paley square functions, fractional integrals   and Riesz transforms
   (see \cite{BHS2008, BHS2009, DY1, DY2, DGMTZ, HLMMY, MSTZ, MSTZ2}).

Let us introduce a new  function class on the upper half plane $\mathbb{R}_+^{n+1}$.
\begin{defn}[Temperature Mean Oscillation for $\L$]\label{def:1.1}
A $ C^1(\Real_+^{n+1})$-functions $u(x,t)$  belongs to the class  ${\rm TMO_\L}(\Real_+^{n+1})$, if  $u(x,t)$ is
the solution of ${\mathbb L}u=0$  in $\Real_+^{n+1} $ such that
\begin{eqnarray}\label{e1.8}
\|u\|^2_{{\rm TMO_\L}(\Real_+^{n+1})}= \sup_{x_B, r_B}r_B^{-n}\int_0^{r_B^2}\int_{B(x_B, r_B)} \left\{ t|\partial_t   u(x,t)|^2+|\nabla_x   u(x,t)|^2 \right\}{dx dt } <\infty.
\end{eqnarray}
\end{defn}

 The following theorem is the main result of this article.

 \begin{thm}\label{th1.1}
 Suppose $V\in B_q$ for some $q\ge n,$
then we have
 \begin{itemize}
\item[(1)] if $f\in {\rm BMO_\L}(\RR)$, then   the function $u=e^{-t \L}f\in {\rm TMO_\L}(\Real_+^{n+1})$
 with
 $$
 \|u\|_{{\rm TMO_\L}(\Real_+^{n+1})}\le  \|f\|_{{\rm BMO_\L}(\RR)}.$$

\item[(2)]    if $u\in {\rm TMO_\L}(\Real_+^{n+1})$, then    there exists some $f\in {\rm BMO_\L}(\Real^{n})$ such that $u=e^{-t \L}f$,
and
$$
\|f\|_{{\rm BMO_\L}(\RR)}\leq C\|u\|_{{\rm TMO_\L}(\Real_+^{n+1})}
$$
with some constant $C>0$ independent of $u$ and $f$.

 \end{itemize}
\end{thm}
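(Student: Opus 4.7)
I propose to prove the two implications of Theorem~\ref{th1.1} separately, adapting the strategy of Fabes and Neri \cite{FN1} for the classical caloric case and accommodating the potential through the auxiliary function $\rho$ and the reverse H\"older hypothesis $V\in B_q$ with $q\geq n$.

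For direction (1), fix a ball $B=B(x_B, r_B)$ and set $B^{*}=B(x_B, 2 r_B)$. I decompose
$$f = (f-f_{B^{*}})\chi_{B^{*}} + (f-f_{B^{*}})\chi_{(B^{*})^c} + f_{B^{*}} =: f_1 + f_2 + f_3$$
and bound the Carleson-box integral of $|\nabla e^{-t\L} f_j|^2$ separately. For $f_1$, I would use the $L^2$ square function estimate $\int_0^\infty \|\sqrt{t}\,\nabla e^{-t\L} g\|_{L^2(\RR)}^2 \,dt/t \lesssim \|g\|_{L^2(\RR)}^2$ combined with the John--Nirenberg bound $\|f_1\|_{L^2}^2 \lesssim r_B^n \|f\|_{{\rm BMO}_\L}^2$. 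For $f_2$, I would invoke the pointwise gradient kernel bound $|\nabla_x \K_t(x,y)|\lesssim t^{-(n+1)/2} e^{-c|x-y|^2/t}$ (available because the semigroup kernel is dominated by the classical heat kernel and $V\in B_q$ supplies the needed interior gradient estimate) and sum over dyadic shells outside $B^{*}$ against the standard BMO growth of $|f-f_{B^{*}}|$. For the constant piece $f_3=f_{B^{*}}$, I would exploit the identity $e^{-t\L}1-1=-\int_0^t e^{-s\L} V\, ds$ to extract smallness of $\nabla e^{-t\L}1$ from $V$; this must compensate for the possibly large size $|f_{B^{*}}|\lesssim(1+\log_+(\rho(x_B)/r_B))\|f\|_{{\rm BMO}_\L}$, and it is precisely here that the assumption $q\geq n$ is used.

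For direction (2), I would first construct the boundary trace $f$ by showing that $u(\cdot, t)$ is Cauchy as $t\to 0^+$ in an appropriate weak sense (for instance, distributionally on compacts), using the Carleson bound together with the identity $u(\cdot, t)-u(\cdot, s)=-\int_s^t \L u(\cdot, \sigma)\, d\sigma$. Weak uniqueness for $\partial_t u+\L u=0$ then yields $u(\cdot, t)=e^{-t\L} f$. Second, to verify $f\in {\rm BMO}_\L$, I would represent the oscillation of $f$ on $B$ in terms of $\nabla u$ on the Carleson box $B\times(0, r_B^2)$ via
$$f(y)-u(y, r_B^2) = -\int_0^{r_B^2} \partial_t u(y, s)\,ds = \int_0^{r_B^2} (\Delta u - V u)(y, s)\,ds,$$
then pair against a test function on $B$ and integrate by parts in $x$ and $t$ to recast everything in terms of integrals of $|\nabla u|^2$ bounded by $\|u\|_{{\rm TMO}_\L}^2$. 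The additional BMO$_\L$ condition \eqref{e1.6} for balls with $r_B\geq \rho(x_B)$ requires a separate estimate at the scale $t\sim \rho(x_B)^2$, again exploiting the $B_q$ hypothesis.

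I expect the main obstacle to be the treatment of $f_3$ in direction~(1) when $r_B\ll \rho(x_B)$: the logarithmic growth of $|f_{B^{*}}|$ must be balanced by a correspondingly sharp smallness estimate on $\nabla e^{-t\L} 1$ integrated over the Carleson box. This balance is what the stronger hypothesis $q\geq n$ (as opposed to the more common $q\geq n/2$) delivers, via an improved gradient estimate for the $\L$-heat kernel acting on constants. A secondary but nontrivial issue in direction~(2) is the rigorous construction of the boundary trace and the verification of \eqref{e1.6}, which has no analogue in the original Fabes--Neri setting and forces genuine use of the critical radius function $\rho$.
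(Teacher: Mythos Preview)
Your plan for part~(1) is essentially the paper's proof: the same three-term decomposition $f=f_1+f_2+f_3$, the $L^2$ square function for $f_1$, the gradient kernel bound plus dyadic annuli for $f_2$, and the smallness of $\sqrt{t}\,\nabla_x e^{-t\L}1$ for $f_3$. You have also correctly identified the role of $q\ge n$: it yields $|\sqrt{t}\,\nabla_x e^{-t\L}1(x)|\lesssim (\sqrt{t}/\rho(x))^{\delta}$ with $\delta>1$, which kills the $\log(\rho(x_B)/r_B)$ growth of $|f_{B^*}|$.

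For part~(2), however, your scheme has a real gap. Pairing
\[
f(y)-u(y,r_B^2)=\int_0^{r_B^2}(\Delta u-Vu)(y,s)\,ds
\]
against a test function $g$ on $B$ and integrating by parts in $x$ produces terms $\int_0^{r_B^2}\!\int_B \nabla_x u\cdot\nabla_x g$ and $\int_0^{r_B^2}\!\int_B Vu\,g$, plus boundary contributions on $\partial B$. None of these is controlled by $\int_0^{r_B^2}\!\int_B |\nabla u|^2$: after Cauchy--Schwarz the first term is bounded by $r_B^{n/2+1}\|u\|_{{\rm TMO}_\L}\|\nabla g\|_{L^2}$, which involves $\|\nabla g\|$ rather than $\|g\|$ and therefore does not dualize to a BMO bound on $f$; the $Vu\,g$ term has no reason to be dominated by the Carleson quantity at all. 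This loss of a derivative is exactly why the direct argument fails in the BMO case (it does work in the Lipschitz regime $\alpha>0$, where $|\partial_t u|\lesssim t^{(\alpha-1)/2}$ is integrable).

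The paper circumvents this by a different route. It sets $f_k=u(\cdot,1/k)$, proves the semigroup identity $u(\cdot,t+1/k)=e^{-t\L}f_k$ via a parabolic Liouville theorem (Lemma~\ref{le2.7}), and then shows the $f_k$ are \emph{uniformly} bounded in ${\rm BMO}_\L$ using the equivalent norm
\[
\sup_B \Big(|B|^{-1}\!\int_B |f-e^{-r_B^2\L}f|^2\Big)^{1/2}
\]
together with a Carleson-type duality estimate (Lemmas~\ref{le3.6}--\ref{le3.7}): one writes $\int f_k\,(I-e^{-r_B^2\L})g$ as a quadratic pairing $\int_{\mathbb R^{n+1}_+} t\partial_t e^{-t\L}f_k\cdot t\partial_t e^{-t\L}(I-e^{-r_B^2\L})g\,\frac{dx\,dt}{t}$ and bounds it by $|B|^{1/2}\interleave\mu_{\nabla_t,f_k}\interleave_{2car}\|g\|_{L^{2n/(n+2)}}$. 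The boundary datum $f$ is then obtained as a weak-$*$ limit in ${\rm BMO}_\L=(H^1_\L)^*$, and one more application of the Liouville theorem gives $u=e^{-t\L}f$. What this buys over your proposal is that the ``integration by parts'' happens entirely through the semigroup calculus, so no stray $\nabla g$ or $Vu$ terms appear; the price is reliance on the $H^1_\L$--${\rm BMO}_\L$ duality and the alternative BMO$_\L$ characterization \eqref{e3.9}, which you would need to invoke as well.
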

We should mention that for the Schr\"odinger operator $\L$ in \eqref{e1.2},
an important property of the $B_q$ class, proved in \cite[Lemma 3]{Ge}, assures that the condition $V\in B_q$ also implies $V\in B_{q+\epsilon}$
for some $\epsilon>0$
 and that the $B_{q+\epsilon}$ constant of $V$ is controlled in terms of the one of $B_q$ membership. This in particular implies
 $V\in L^q_{\rm loc}(\RR)$ for some $q$ strictly greater than $n/2.$ However,  in general the potential $V$ can be unbounded and does not
 belong to $L^p(\RR)$ for any $1 \le p \le \infty .$  As a model example, we could take $V(x)=|x|^2$.
 Moreover, as
  noted in \cite{Shen}, if $V$ is any nonnegative
 polynomial, then $V$ satisfies the stronger condition
 \begin{eqnarray*}
\max_{x\in B} V(x)\leq\frac{C}{\abs{B}}\int_BV(y)~dy,
\end{eqnarray*}
which implies $V\in B_q$ for every $q\in (1, \infty)$ with a uniform constant.

This article is organized as follows. In Section 2, we recall some preliminary results including
the  kernel estimates of the heat,  
  the $H^1_{\L}(\RR)$ and ${\rm BMO}_{\L}(\RR)$ spaces associated to the Schr\"odinger operators
and certain properties of ${\mathbb L}$-carolic functions.
In  Section 3, we will prove our main result,   Theorem~\ref{th1.1}.
In Section 4,   we will extend the method    for the space ${\rm BMO}_{\L}(\RR)$ in Section 3
 to obtain some generalizations to Lipschitz-type spaces ${\Lambda}_{\L}^{\alpha}(\RR)$
for $\alpha\in (0, 1)$.

Throughout the article, the letters ``$c$ " and ``$C$ " will  denote (possibly different) constants
which are independent of the essential variables.

\vskip 1cm

\section{Basic properties of the heat  semigroups of Schr\"odinger operators}
\setcounter{equation}{0}


In this section, we begin by recalling some basic properties of the critical radii function $\rho(x)$ under the assumption \eqref{e1.3} on $V$
(see Section 2, \cite{DGMTZ}).

\begin{lem}\label{le2.1} Suppose $V\in B_q$ for some $q> n/2.$
There exist $C>0$ and $k_0\geq1$ such that for all $x,y\in\Real^n$
\begin{equation}\label{e2.1}
C^{-1}\rho(x)\left(1+\frac{\abs{x-y}}{\rho(x)}\right)^{-k_0}\leq\rho(y)\leq C\rho(x)
\left(1+\frac{\abs{x-y}}{\rho(x)}\right)^{\frac{k_0}{k_0+1}}.
\end{equation}
In particular, $\rho(x)\sim \rho(y)$ when $y\in B(x, r)$ and $r\leq c\rho(x)$.
\end{lem}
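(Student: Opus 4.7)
The plan is to control $\rho(y)$ via a single auxiliary quantity, namely
$$\psi(x, r) := r^{2-n} \int_{B(x,r)} V(z)\, dz,$$
which by continuity in $r$ and the hypothesis $V \not\equiv 0$ satisfies $\psi(x, \rho(x)) = 1$, with $\psi(x, r) < 1$ for $r < \rho(x)$ and $\psi(x, r) > 1$ for $r > \rho(x)$. First I would establish the Shen-type scaling estimate
$$\psi(x, r) \leq C \left(\tfrac{r}{R}\right)^{2-n/q} \psi(x, R), \qquad 0 < r \leq R,$$
obtained by combining H\"older's inequality on $B(x, R)$ with the reverse H\"older condition \eqref{e1.3}. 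Since $q > n/2$ (actually $q\ge n$ in our setting), the exponent $\alpha := 2 - n/q$ is strictly positive. Specializing $R = \rho(x)$ gives the upper bound $\psi(x, r) \leq C(r/\rho(x))^\alpha$ for $r \leq \rho(x)$; specializing $r = \rho(x)$ gives the complementary lower bound $\psi(x, R) \geq c(R/\rho(x))^\alpha$ for $R \geq \rho(x)$.

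With these monotonicity-type controls in hand, I would prove the upper bound on $\rho(y)$ first. Set $R := |x-y|$ and $r_2 := \rho(y)$. The case $r_2 \leq R$ gives $r_2 \leq \rho(x)(1+R/\rho(x))$ trivially. Otherwise $B(y, r_2) \supset B(x, r_2 - R)$, so the identity $\psi(y, r_2) = 1$ and monotonicity of the integral yield
$$\psi(x, r_2 - R) \leq \left(\frac{r_2}{r_2 - R}\right)^{n-2}.$$
If in addition $r_2 - R \geq \rho(x)$, the lower bound from the previous step gives $c\bigl((r_2-R)/\rho(x)\bigr)^\alpha \leq \bigl(r_2/(r_2-R)\bigr)^{n-2}$; solving this algebraic inequality for $r_2$ produces a bound of the form $r_2 \leq C \rho(x)(1 + R/\rho(x))^\beta$ with an explicit $\beta < 1$, and one sets $k_0$ so that $\beta = k_0/(k_0+1)$. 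If instead $r_2 - R < \rho(x)$, then $r_2 < R + \rho(x) \leq \rho(x)(1 + R/\rho(x))$, which is strictly stronger than what we need.

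The lower bound on $\rho(y)$ then follows from the upper bound by exchanging the roles of $x$ and $y$: we obtain $\rho(x) \leq C \rho(y)\bigl(1 + R/\rho(y)\bigr)^{k_0/(k_0+1)}$, and a routine algebraic rearrangement, splitting into the regimes $R \lessgtr \rho(y)$ and absorbing the $\rho(y)$ hidden inside $1 + R/\rho(y)$, yields the stated inequality with the worse exponent $k_0$ (the deterioration is due to having to invert a sublinear map). The concluding local comparability $\rho(x) \sim \rho(y)$ for $|x-y| \leq c\rho(x)$ is then immediate, since both factors $\bigl(1+|x-y|/\rho(x)\bigr)^{\pm}$ are bounded by a universal constant in that regime. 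The main technical hurdle is the Shen-type scaling estimate for $\psi$ in the first step, which genuinely uses the reverse H\"older inequality; once that is established, the remainder of the argument is careful bookkeeping with ball inclusions and exponent arithmetic.
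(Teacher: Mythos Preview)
The paper does not prove this lemma; it is stated as a known fact with a reference to \cite{DGMTZ} (and ultimately to Shen \cite{Shen}, Lemma~1.4). Your proposal reproduces Shen's original argument: control the auxiliary function $\psi(x,r)=r^{2-n}\int_{B(x,r)}V$ via the reverse H\"older scaling estimate, then use ball inclusions $B(x,r_2-R)\subset B(y,r_2)$ together with $\psi(y,\rho(y))=1$ to compare $\rho(y)$ with $\rho(x)$, and finally obtain the lower bound by symmetry. This is correct and is exactly the standard route.

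Two small remarks. First, your parenthetical ``actually $q\geq n$ in our setting'' is misplaced here: Lemma~\ref{le2.1} is stated and used under the weaker hypothesis $q>n/2$, and your argument only needs $\alpha=2-n/q>0$, so you should not invoke $q\geq n$. Second, the assertion $\psi(x,\rho(x))=1$ (in particular $\rho(x)<\infty$) implicitly uses that $V$, being a nontrivial $B_q$ weight, is an $A_\infty$ weight and hence positive a.e.; this is standard but worth a one-line citation if you write the proof out in full.
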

It follows from Lemmas 1.2 and 1.8 in \cite{Shen}  that there is a constant $C_0$ such that
for a nonnegative Schwartz class function $\varphi$ there exists a constant $C$ such that
\begin{equation}\label{e2.2}
\int_{\RR} \varphi_t(x-y)V(y)dy\leq
\left\{
\begin{array}{lll}
Ct^{-1}\left({\sqrt{t}\over \rho(x)}\right)^{\delta}\ \ \ &{\rm for}\ t\leq \rho(x)^2,\\
C\left({\sqrt{t}\over \rho(x)}\right)^{C_0+2-n}\ \ \ &{\rm for}\ t> \rho(x)^2,
\end{array}
\right.
\end{equation}
where $\varphi_t(x)=t^{-n/2}\varphi(x/\sqrt{t}),$   and
$\displaystyle
\delta =2-\frac{n}{q}>0.
$

For the heat kernel ${\mathcal K}_t(x,y)$
of the semigroup
$e^{-t\L}$, we have the following estimates.

\begin{lem}[See {\cite[Proposition 4]{DGMTZ}}] \label{le2.2} Suppose $V\in B_q$ for some $q> n/2.$
For every $N>0$, there exist the constants $C_N$ and $c$ such that for $ x,y\in\Real^n, t >0$, such that
 \begin{itemize}
\item[(i)]
\begin{equation*}
0\leq {\mathcal K}_t(x,y)\leq C_Nt^{-n/2}e^{-\frac{\abs{x-y}^2}{ct}}\left(1+\frac{\sqrt{t}}{\rho(x)}
+\frac{\sqrt{t}}{\rho(y)}\right)^{-N},
\end{equation*}

\item[(ii)] 
\begin{equation*}
 \abs{\partial_t{\mathcal K}_t(x,y)}\leq C_Nt^{-\frac{n+2}{2}}e^{- \frac{\abs{x-y}^2}{ct}}
 \left(1+\frac{\sqrt t}{\rho(x)}+\frac{\sqrt t}{\rho(y)}\right)^{-N}  \ {\rm and}
\end{equation*}
\item[(iii)]
\begin{equation*}
\abs{t\partial_t e^{-t\L}1(x)}\le C_N {\left({\sqrt t/ \rho(x)}\right)^{2-n/q} \over \left(1+{\sqrt t/ \rho(x)}\right)^N }.
\end{equation*}
 \end{itemize}
\end{lem}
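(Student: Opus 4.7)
\emph{Plan for (i).} Positivity and the Gaussian domination $\mathcal K_t(x,y)\le h_t(x-y)$ are consequences of the Feynman--Kac formula and are already recorded in the introduction; the real content is the polynomial decay factor $(1+\sqrt t/\rho(x)+\sqrt t/\rho(y))^{-N}$. I would extract it by iterating the Duhamel perturbation identity
$$h_t(x-y)-\mathcal K_t(x,y)=\int_0^t\!\!\int_{\mathbb R^n}\mathcal K_{t-s}(x,z)\,V(z)\,h_s(z-y)\,dz\,ds,$$
which follows from differentiating $s\mapsto e^{-(t-s)\mathcal L}e^{s\Delta}$. Substituting the a priori bound $\mathcal K_{t-s}\le h_{t-s}$ into the right-hand side, splitting the time integral at $t/2$, and reshuffling the Gaussians via $|x-y|^2\le 2|x-z|^2+2|z-y|^2$ reduce matters to integrals of the form $\int\psi_\tau(x-z)V(z)\,dz$ for Schwartz weights $\psi_\tau$; these are exactly what the Shen-type estimate~\eqref{e2.2} controls. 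Each pass gains a factor $(\sqrt t/\rho(x))^{\delta}$ (in the regime $\sqrt t\le\rho(x)$) or $(\sqrt t/\rho(x))^{C_0+2-n}$ (in the regime $\sqrt t>\rho(x)$), and after at most $\lceil N/\delta\rceil$ iterations the desired decay in $\sqrt t/\rho(x)$ is reached. The symmetric role of $x$ and $y$ (since $\mathcal K_t$ is symmetric) yields the $\sqrt t/\rho(y)$ factor, and the two combine via $1+a+b\le(1+a)(1+b)$, together with Lemma~\ref{le2.1}, to give the stated form.

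\emph{Main obstacle.} The delicate bookkeeping issue is maintaining the Gaussian envelope $e^{-|x-y|^2/(ct)}$ uniformly through the iteration, since the constant $c$ degrades at each step. The $(t-s)^{-n/2}$ factor produces an integrable singularity at $s=t$ only once the $V$-integral supplies the redeeming $(t-s)^{-1}(\sqrt{t-s}/\rho)^{\delta}$ power from~\eqref{e2.2}, which effectively limits how much polynomial decay one can extract per iteration; choosing the number of iterations in terms of $N$, $\delta$ and the Gaussian splitting constant is the main quantitative point. One must also treat the two regimes of~\eqref{e2.2} (small vs.\ large $\sqrt t/\rho(x)$) separately and match the exponents $\delta$ and $C_0+2-n$ when passing between them.

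\emph{Plan for (ii).} Once (i) is proved, the time-derivative estimate follows from analyticity of the semigroup. Because $\mathcal L=-\Delta+V$ is self-adjoint and nonnegative on $L^2(\mathbb R^n)$, $e^{-z\mathcal L}$ extends holomorphically to a sector $|\arg z|<\theta_0$. A rerun of the Duhamel argument with complex $z$, using $|h_z(x-y)|\le C|z|^{-n/2}e^{-c|x-y|^2/|z|}$ on a fixed subsector, shows that the estimate of~(i) persists for complex times, with $t$ replaced by $|z|$ and constants uniform in $\arg z$. Cauchy's integral formula on the circle $|w-t|=t/2$,
$$\partial_t\mathcal K_t(x,y)=\frac{1}{2\pi i}\oint_{|w-t|=t/2}\frac{\mathcal K_w(x,y)}{(w-t)^2}\,dw,$$
then yields $|\partial_t\mathcal K_t(x,y)|\lesssim t^{-1}\sup_{|w|\sim t}|\mathcal K_w(x,y)|$, which by~(i) is exactly the claimed $t^{-(n+2)/2}$ Gaussian-with-polynomial-decay bound.
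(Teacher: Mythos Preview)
The paper does not actually prove this lemma: it is simply quoted from \cite{DGMTZ} (see also Kurata \cite{Kurata}), and the only additional remark is that the same computation as in \cite[Proposition~4]{DGMTZ} yields the higher-order bound~\eqref{eq1}. So there is no ``paper's own proof'' to compare against beyond the cited reference.

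Your outline is correct and is, in substance, the argument carried out in \cite{DGMTZ} and \cite{Kurata}. For~(i), the Duhamel (Kato--Trotter) iteration combined with the Shen-type bound~\eqref{e2.2} is exactly how the extra polynomial factor $(1+\sqrt{t}/\rho(x)+\sqrt{t}/\rho(y))^{-N}$ is obtained there; your identification of the Gaussian-constant degradation and the $\lceil N/\delta\rceil$ iteration count matches the quantitative core of those proofs. For~(ii), your analytic-extension plus Cauchy-integral route is a legitimate and standard way to pass from pointwise kernel bounds to time-derivative bounds; \cite{DGMTZ} phrases the argument somewhat differently (working directly with $t\mathcal L e^{-t\mathcal L}$ and re-running the perturbation estimate), but the two approaches are equivalent in difficulty and both deliver the stated $t^{-(n+2)/2}$ Gaussian-with-decay bound. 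In short: nothing is missing, and your plan reproduces the cited proof up to cosmetic differences.
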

In fact, with the same computation as in the proof of \cite[Proposition 4]{DGMTZ}, we have
\begin{equation}\label{eq1}
 \abs{t^m\partial_t^m{\mathcal K}_t(x,y)}\leq C t^{-\frac{n}{2}}e^{- \frac{\abs{x-y}^2}{ct}}
 \left(1+\frac{\sqrt t}{\rho(x)}+\frac{\sqrt t}{\rho(y)}\right)^{-N} .
\end{equation}
Kato-Trotter formula (see for instance   \cite{DZ2002}) asserts that
\begin{align*}
h_t(x-y)- {\mathcal K}_t(x,y)
=  \int_0^t\int_{\Real^n} h_s(x-z)V(z){\mathcal K}_{t-s}(z, y)dzds.
\end{align*}
Then we have the following result.

\begin{lem}[See {\cite[Proposition 4.11]{DZ2002}}]\label{le2.3} Suppose $V\in B_q$ for some $q> n/2.$
There exists a nonnegative Schwartz  function $\varphi$ on
$\Real^n$ such that
\begin{equation*}
\abs{h_t(x-y)-{\mathcal K}_t(x,y)}\leq\left(\frac{\sqrt{t}}{\rho(x)}\right)^{2-n/q}\varphi_t(x-y),\quad x,y\in\Real^n,~t>0,
\end{equation*}
where $\varphi_t(x)=t^{-n/2}\varphi\left(x/\sqrt{t}\right)$.

\end{lem}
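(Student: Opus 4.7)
The plan is to prove the two directions of Theorem~\ref{th1.1} separately, with the converse being the delicate one.

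For $f\in{\rm BMO}_\L \Rightarrow u=e^{-t\L}f\in{\rm TMO}_\L$, fix a ball $B=B(x_B,r_B)$ and split $f=f_1+f_2+f_3$ adapted to $B$ and to the critical radius $\rho(x_B)$, with $f_1=(f-c_B)\chi_{4B}$, $f_2=(f-c_B)\chi_{(4B)^c}$, and $f_3=c_B$. The constant is $c_B=f_{4B}$ when $r_B<\rho(x_B)$; in the complementary regime $r_B\geq \rho(x_B)$ I take $c_B=0$ and use condition \eqref{e1.6} in the definition of ${\rm BMO}_\L$ to treat $|f_{4B}|$ as $\leq C\|f\|_{{\rm BMO}_\L}$. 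The $f_1$-piece is controlled by the $L^2$-boundedness of $\sqrt{t}\,\nabla e^{-t\L}$ (from spectral calculus for the self-adjoint $\L\geq 0$) together with $\|f_1\|_2^2\leq C|B|\,\|f\|_{{\rm BMO}_\L}^2$. The $f_2$-piece is estimated pointwise on $B\times(0,r_B^2)$ via the Gaussian kernel bounds of Lemma~\ref{le2.2}, the $t$-derivative bound \eqref{eq1}, and the companion full $\nabla_x$-kernel estimate (valid under $V\in B_q$, $q\geq n$), summed over dyadic annuli $2^{j+1}B\setminus 2^jB$ by means of the telescoping estimate $\|f-f_{4B}\|_{L^1(2^jB)}\leq C\,2^{jn}|B|\,\|f\|_{{\rm BMO}_\L}$. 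For $f_3$, Lemma~\ref{le2.3} and \eqref{e2.2} produce a small factor $(\sqrt{t}/\rho(x_B))^\delta$ that absorbs the constant in the small-ball regime, while in the large-ball regime the decay $(1+\sqrt{t}/\rho)^{-N}$ in Lemma~\ref{le2.2} suffices.

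For $u\in{\rm TMO}_\L\Rightarrow u=e^{-t\L}f$ with $f\in{\rm BMO}_\L$, the route is weak-$\ast$ compactness plus duality. Since ${\rm BMO}_\L(\RR)$ is the dual of $H^1_\L(\RR)$, it suffices to prove a uniform bound $\|u(\cdot,\epsilon)\|_{{\rm BMO}_\L}\leq C\,\|u\|_{{\rm TMO}_\L}$ for all $\epsilon>0$. Alaoglu's theorem then produces $f\in{\rm BMO}_\L$ as a weak-$\ast$ limit of $u(\cdot,\epsilon_k)$ along some subsequence; the semigroup identity $u(\cdot,t+\epsilon_k)=e^{-t\L}u(\cdot,\epsilon_k)$ combined with the uniform bound lets me pass to the limit and conclude $u=e^{-t\L}f$ on $\Real^{n+1}_+$. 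To prove the uniform ${\rm BMO}_\L$ bound I pair $u(\cdot,\epsilon)$ with an arbitrary $H^1_\L$-atom $a$ supported in $B=B(x_B,r_B)$, integrate ${\mathbb L}u=0$ against a Green-type test function for $\L$ on the Carleson box $B\times(0,r_B^2)$ (or a suitable enlargement in the large-ball regime), and rewrite the pairing as a space-time integral of $\nabla u$ weighted by a function controlled by $a$; Cauchy--Schwarz together with the ${\rm TMO}_\L$ hypothesis then yields the bound. The equivalence $\|u\|_{{\rm TMO}_\L}\approx \|f\|_{{\rm BMO}_\L}$ asserted in part (1) follows by combining its direct half with the converse applied to $u=e^{-t\L}f$.

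The main obstacle is the converse, and within it two technical points demand care. First, the integration by parts for ${\mathbb L}=\partial_t+\L$ must absorb the potential term $Vu$, so the test function one pairs against $a$ cannot be the classical Poisson kernel but must be built from the heat semigroup of $\L$ itself (typically $e^{-s\L}a$ or a relative); this is where specific properties of $H^1_\L$ enter. Second, controlling the $t\to\infty$ tail of the resulting identity forces the use of the full gradient bound on ${\mathcal K}_t(x,y)$ in both $x$ and $t$, which is precisely what the strengthened hypothesis $V\in B_q$ with $q\geq n$ buys beyond the weaker $q\geq n/2$ used throughout Section~2. Once the uniform ${\rm BMO}_\L$ estimate on $u(\cdot,\epsilon)$ is in hand, the extraction of $f$ and the verification $u=e^{-t\L}f$ are essentially formal.
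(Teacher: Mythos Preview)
Your proposal does not address the stated lemma at all. Lemma~\ref{le2.3} is the pointwise comparison estimate
\[
\abs{h_t(x-y)-\mathcal{K}_t(x,y)}\leq\left(\frac{\sqrt{t}}{\rho(x)}\right)^{\delta}\varphi_t(x-y),
\]
which the paper does not prove but simply quotes from \cite{DZ2002}. What you have written is instead a sketch of the proof of Theorem~\ref{th1.1}, the main result of the paper. Nothing in your proposal establishes the difference bound between the free heat kernel $h_t$ and the Schr\"odinger heat kernel $\mathcal{K}_t$; you only \emph{invoke} Lemma~\ref{le2.3} in your treatment of the constant piece $f_3$. So as a proof of the statement in question, the proposal is vacuous.

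If your intention was to prove Theorem~\ref{th1.1}, then your outline is broadly in line with the paper's, with one substantive divergence worth flagging. For the converse direction you propose to obtain the uniform bound $\|u(\cdot,\epsilon)\|_{{\rm BMO}_\L}\leq C\|u\|_{{\rm TMO}_\L}$ by pairing $u(\cdot,\epsilon)$ directly against an $H^1_\L$-atom $a$ and integrating by parts on a Carleson box. The paper takes a different route: it uses the equivalent characterization \eqref{e3.9} of ${\rm BMO}_\L$ in terms of $\|f-e^{-r_B^2\L}f\|_{L^2(B)}$, and then controls this by the reproducing-formula identity of Lemma~\ref{le3.7} together with the Carleson-type estimate of Lemma~\ref{le3.6}. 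This avoids your ``Green-type test function'' construction entirely and does not require the full $\nabla_x$-kernel bound for the converse step (that bound is used only in part~(1)). Your boxed integration-by-parts idea is plausible in spirit but you have not specified the test function, and the boundary and potential terms it generates are exactly where the argument can break down; the paper's approach sidesteps this by working through $L^2$-duality rather than $H^1_\L$--${\rm BMO}_\L$ duality at the level of the estimate. Also, the paper needs a separate Liouville-type argument (Lemma~\ref{le2.7}) to pass from the weak-$\ast$ limit to the identification $u=e^{-t\L}f$, which you describe as ``essentially formal'' but which in fact requires nontrivial growth control on $u$ and on $e^{-t\L}f$.
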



Recall that a  Hardy-type space associated to $\L$ was introduced by J. Dziuba\'nski et al.  in \cite{DGMTZ, DZ1999, DZ2002},  defined  by
\begin{equation}\label{e2.7}
 H^1_{\L}(\RR)=\big\{ f\in L^1(\RR): {\mathcal T}^{\ast}f(x)= \sup_{t>0}|e^{-t{\L}}f(x)|\in L^1(\RR) \big\}
\end{equation}
with
$$
\|f\|_{H^1_{\L}(\RR)}=\|{\mathcal T}^{\ast}f\|_{L^1(\RR)}.
$$
 For the above class of potentials,   $H^1_{\L}(\RR)$ admits an atomic characterization, where
cancellation conditions are only required for atoms with small supports.
It can be verified that for   every $m\in{\mathbb N}$, for fixed $t>0$ and  $x\in\RR,$
$\partial_t^m {\mathcal K}_t(x, \cdot)\in H^1_{\L}(\RR)$ with
 \begin{equation}\label{e2.8}
 \|\partial_t^m {\mathcal K}_t(x, \cdot)\|_{H^1_{\L}(\RR)}
 \leq Ct^{-m}.
 \end{equation}
 Indeed, by \eqref{eq1}  we have that for a fixed $y\in \RR,$
 \begin{align*}
&\sup_{s>0} \abs{e^{-s{\L}}\big(t^m\partial_t^m{\mathcal K}_t(\cdot, y)\big)(x)}= \sup_{s>0}\abs{\frac{t^m}{(t+s)^m}\big((t+s)^m\partial_{t+s}^m{\mathcal K}_{t+s}(\cdot, y)\big)(x)}\\
&\leq  C \sup_{s>0} {t^m\over (t+s)^{m+{n\over2}}} e^{- \frac{\abs{x-y}^2}{c(t+s)}}\le C t^m \sup_{s>0} \frac{1}{\abs{x-y}^{n+2m}}e^{- \frac{\abs{x-y}^2}{c(t+s)}} \in L^1(\RR, dx),
 \end{align*}
 which, in combination with the fact that $\partial_t^m {\mathcal K}_t(x, \cdot)=\partial_t^m {\mathcal K}_t(\cdot, x)$,
shows  estimate \eqref{e2.8}.

\begin{lem}\label{le2.5}
Suppose $V\in B_q$ for some $q> n/2.$ Then
 the dual space of $H^1_{\L}(\RR)$ is  ${{\rm BMO}_{\L}(\RR)}$, i.e.,
$$
(H^1_{\L}(\RR))^{\ast}={{\rm BMO}_{\L}(\RR)}.
$$
\end{lem}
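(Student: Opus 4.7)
The plan is to establish both inclusions of the duality by leveraging the atomic decomposition of $H^1_{\L}(\RR)$ developed in \cite{DGMTZ, DZ1999, DZ2002}. Recall that, under the hypothesis $V\in B_q$ with $q>n/2$, a function $f\in H^1_{\L}(\RR)$ admits an atomic decomposition $f=\sum_j \lambda_j a_j$ with $\sum_j |\lambda_j|\approx \|f\|_{H^1_{\L}(\RR)}$, where each $a_j$ is a localized atom supported in some ball $B_j=B(x_j,r_j)$ satisfying $\|a_j\|_{L^\infty}\leq |B_j|^{-1}$, and with the cancellation requirement $\int a_j=0$ imposed only when $r_j<\rho(x_j)$; atoms supported in balls with $r_j\geq\rho(x_j)$ are not required to have mean zero. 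This two-tier structure of atoms matches precisely the two-tier defining conditions \eqref{e1.5}--\eqref{e1.6} of $\mathrm{BMO}_{\L}(\RR)$, which is what makes the duality go through.

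For the easy direction $\mathrm{BMO}_{\L}(\RR)\subset (H^1_{\L}(\RR))^\ast$, given $\phi\in\mathrm{BMO}_{\L}(\RR)$ I would define, initially on finite linear combinations of atoms, the pairing $L_\phi(f)=\int_{\RR}f(x)\phi(x)\,dx$. For each small atom $a_j$ (with $r_j<\rho(x_j)$) the cancellation $\int a_j=0$ lets us subtract the average $\phi_{B_j}$ and bound $|\int a_j\phi|\leq \|a_j\|_\infty\int_{B_j}|\phi-\phi_{B_j}|\leq \|\phi\|_{\mathrm{BMO}_{\L}}$ via \eqref{e1.5}; for each large atom (with $r_j\geq \rho(x_j)$) cancellation is unavailable, but \eqref{e1.6} gives $|\int a_j\phi|\leq \|a_j\|_\infty\int_{B_j}|\phi|\leq\|\phi\|_{\mathrm{BMO}_{\L}}$ directly. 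Summing in $j$ yields $|L_\phi(f)|\leq C\|\phi\|_{\mathrm{BMO}_{\L}}\|f\|_{H^1_{\L}}$; a standard density argument (together with a check that the atomic series converges in $L^1$ so the pairing is independent of decomposition) extends $L_\phi$ to all of $H^1_{\L}(\RR)$.

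For the converse $(H^1_{\L}(\RR))^\ast\subset\mathrm{BMO}_{\L}(\RR)$, I would use the Hahn--Banach strategy tailored to localized atoms. Fix a ball $B=B(x_B,r_B)$ and consider the subspace $L^2_{0,B}\subset L^2(B)$ of zero-mean $L^2$-functions (when $r_B<\rho(x_B)$) or the full $L^2(B)$ (when $r_B\geq\rho(x_B)$); every such function, after normalization by $\|\cdot\|_{L^2(B)}|B|^{1/2}$, is a bounded multiple of an $H^1_{\L}$-atom, so the given functional $L$ restricts to a bounded linear functional on these $L^2$-spaces, with norm controlled by $|B|^{1/2}\|L\|_{(H^1_{\L})^\ast}$. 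Riesz representation produces, for each $B$, a function $\phi_B\in L^2(B)$ representing $L$; a standard patching / consistency argument across balls (using uniqueness modulo constants in the small-ball case, and genuine uniqueness in the large-ball case) yields a single locally integrable $\phi$ with $L(f)=\int f\phi$ for all atoms. Running the norm bounds backward on indicator-like test atoms gives exactly $\frac{1}{|B|}\int_B|\phi-\phi_B|\leq C\|L\|$ for $r_B<\rho(x_B)$ and $\frac{1}{|B|}\int_B|\phi|\leq C\|L\|$ for $r_B\geq\rho(x_B)$, which is \eqref{e1.5}--\eqref{e1.6}.

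The main obstacle, and the place where the proof genuinely differs from the classical Fefferman--Stein duality $(H^1)^\ast=\mathrm{BMO}$, is the careful bookkeeping around the \emph{large atoms} and the matching \emph{second condition} \eqref{e1.6}: one must verify that the atomic series converges in a sense allowing the pairing $\int f\phi$ to be unambiguous (in particular, that it does not depend on the atomic decomposition chosen), and that in the converse direction the representer $\phi$ produced by Riesz representation on different balls glues together globally without a hidden additive constant on large balls. Handling these gluing/uniqueness issues cleanly via the critical radius function $\rho$ and the covering properties encoded in Lemma~\ref{le2.1} is the delicate part; the rest is essentially the classical John--Nirenberg-type argument adapted atom by atom.
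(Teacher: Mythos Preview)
Your proposal is a reasonable and essentially correct outline of the proof of the duality $(H^1_{\L})^\ast=\mathrm{BMO}_{\L}$, following the Fefferman--Stein paradigm adapted to the two-tier atomic structure (small atoms with cancellation, large atoms without) and the matching two-tier definition \eqref{e1.5}--\eqref{e1.6}. This is indeed the strategy carried out in \cite[Theorem~4]{DGMTZ}.

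However, the paper itself does not give a proof of this lemma at all: its ``proof'' consists solely of the sentence ``For the proof, we refer to \cite[Theorem~4]{DGMTZ}. See also \cite{DY2, HLMMY}.'' So there is no in-paper argument to compare your proposal against; you have supplied a sketch where the authors simply cite the literature. If your goal is to match the paper, a one-line citation suffices; if your goal is to actually prove the result, what you have written is the right skeleton, and the details (consistency of the pairing across atomic decompositions, and the gluing of the local Riesz representers using the covering by critical balls from Lemma~\ref{le2.1}) are exactly what \cite{DGMTZ} works out.
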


 \begin{proof} For the proof, we refer to \cite[Theorem 4]{DGMTZ}. See also \cite{DY2,  HLMMY}.
 \end{proof}

  \smallskip

We now recall a local behavior of solutions to $\partial_{t}u+{\L} u =0$, which was proved in \cite[Lemma 3.3]{WY}, see it also in \cite[Lemma 3.2]{GJ}.
We define parabolic cubes of center $(x,t)$ and radius $r$ by $B_{r}(x,t):=\{(y,s)\in {\RR}\times\Real_+:\abs{y-x}<r,\, t-r^2<s\le t\}=B(x,r)\times(t-r^2,t]$.
And for every $(x,t), (y,s)\in \RR\times (0, \infty)$, we define the parabolic metric: $\abs{(x,t)-(y,s)}=\max\{\abs{x-y}, \abs{s-t}^{1/2}\}$.

\begin{lemma}\label{le2.6} Suppose $0\leq V\in L^q_{\rm loc}(\RR)$ for some $q> n/2.$
  Let $u$ be a weak solution of ${\mathbb L}u=0$
in the parabolic cube $B_{r_0}(x_0,t_0)$.
Then there exists a constant $C=C_n>0$ such that

\begin{eqnarray*}
\sup_{B_{r_0/4}(x_0,t_0)}| u(x,t)| \leq C\Big({1\over r_0^{n+2}}
\int_{B_{r_0/2}(x_0,\, t_0)}| u(x,t)|^2dxdt\Big)^{1/2}.
\end{eqnarray*}
\end{lemma}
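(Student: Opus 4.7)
The plan is to reduce to the classical heat equation by showing that $u^{2}$ is a non-negative weak subsolution of $\partial_{t}w-\Delta w\le 0$, and then to apply the standard Moser mean-value inequality for such subsolutions. The crucial point is that $V\ge 0$, so the $Vu^{2}$ term will contribute a favorable sign and can simply be dropped.

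First, I would test the weak formulation with $\psi=u\phi$ for a non-negative space-time cutoff $\phi\in C_{0}^{\infty}(B_{r_{0}}(x_{0},t_{0}))$. Since $u$ is only a priori in $W^{1,2}_{\mathrm{loc}}$, the function $u\phi$ lies in $W^{1,2}_{0}$ but not in $C_{0}^{1}$; one therefore truncates $u_{M}:=\max(-M,\min(u,M))$, applies the weak equation to $\psi_{M}=u_{M}\phi$ via a standard density/Steklov averaging argument, and passes to the limit as $M\to\infty$. Expanding the products and integrating by parts in $t$ via the identities $u\nabla u=\tfrac{1}{2}\nabla(u^{2})$ and $u\,\partial_{t}u=\tfrac{1}{2}\partial_{t}(u^{2})$, the weak equation reduces to
$$\tfrac{1}{2}\int\nabla(u^{2})\cdot\nabla\phi\,dxdt-\tfrac{1}{2}\int u^{2}\partial_{t}\phi\,dxdt=-\int|\nabla u|^{2}\phi\,dxdt-\int Vu^{2}\phi\,dxdt\le 0,$$
where the last inequality uses $\phi\ge 0$ and $V\ge 0$. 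Hence $w:=u^{2}$ is a non-negative weak subsolution of the classical heat equation in $B_{r_{0}}(x_{0},t_{0})$.

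Next, I would invoke the classical Moser mean-value inequality for non-negative parabolic subsolutions of the heat equation (obtained by the usual Moser iteration built on a Caccioppoli estimate and Sobolev embedding, as carried out, e.g., in the parabolic setting of \cite{WY,GJ}): on the nested parabolic cubes $B_{r_{0}/4}(x_{0},t_{0})\subset B_{r_{0}/2}(x_{0},t_{0})\subset B_{r_{0}}(x_{0},t_{0})$ one has
$$\sup_{B_{r_{0}/4}(x_{0},t_{0})}w\le\frac{C}{r_{0}^{n+2}}\int_{B_{r_{0}/2}(x_{0},t_{0})}w\,dxdt.$$
Specializing $w=u^{2}$ and taking square roots yields the stated estimate.

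The main technical subtlety is the test-function admissibility: justifying the substitution $\psi=u\phi$ in a weak formulation stated only for $C_{0}^{1}$ test functions. The truncation-and-limit scheme above is routine in parabolic regularity theory, and the hypothesis $V\in L^{q}_{\mathrm{loc}}$ for some $q>n/2$ is needed only to guarantee that $Vu^{2}\phi$ is integrable so that passage to the limit $M\to\infty$ is legitimate. Beyond contributing that favorable sign, $V$ plays no further role in the argument, and the statement reduces to a classical result for the heat operator.
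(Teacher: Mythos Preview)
Your proposal is correct and follows exactly the standard route: show that $u^{2}$ is a nonnegative weak subsolution of the heat equation (the $V\ge 0$ term contributing only a favorable sign), then apply Moser's parabolic mean-value inequality. The paper does not supply its own proof of this lemma but simply cites \cite[Lemma~3.3]{WY} and \cite[Lemma~3.2]{GJ}, whose arguments proceed along precisely these lines; the paper's manuscript even contains a commented-out sketch of the elliptic analogue (``it is enough to show that $u^{2}$ is a subharmonic function'') using the identical computation you wrote down.
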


 We can prove the following lemma as in \cite[Lemma 2.7]{DYZ},  which is  a similar result, but essentially not the same.

\begin{lem}\label{le2.7}
Suppose $ V\in B_q(\RR)$ for some $q\geq (n+1)/2.$
 Assume that  $u(x,t)\in W_{\rm loc}^{1,2}(\mathbb R^{n+1})$ is a weak solution of  ${\mathbb L}u =0$. Assume that
there is a $d>0$ such that
\begin{eqnarray}\label{e2.9}
\int_{\mathbb R^{n+1}}{|u(x,t)|^2\over 1+|(x,t)|^{n+d}} dxdt\leq C_{d}<\infty.
\end{eqnarray}
Then $u(x,t)=0$ in $\mathbb R^{n+1}$.
\end{lem}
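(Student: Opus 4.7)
The plan is to exploit the global-in-time nature of the equation by representing $u(\cdot,t)$ as the action of the semigroup $e^{-(t-s)\L}$ on $u(\cdot,s)$ for an arbitrarily remote past time $s$, and then to send $s\to-\infty$ using the rapid decay of the kernel $\K_{t-s}$ provided by Lemma \ref{le2.2}. This converts the weighted $L^2$-integrability hypothesis into pointwise polynomial growth control on $u$, after which the extra factor $(1+\sqrt{t-s}/\rho(x))^{-N}$ beats any polynomial.

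First I would use the hypothesis to deduce a pointwise polynomial growth bound on $u$. For every parabolic ball $B_R(X_0)\subset\mathbb R^{n+1}$,
\[
\int_{B_R(X_0)}|u|^2\,dX \,\le\, (1+|X_0|+R)^{n+d}\!\int_{\mathbb R^{n+1}}\frac{|u(X)|^2}{1+|X|^{n+d}}\,dX \,\le\, C(1+|X_0|+R)^{n+d},
\]
and applying the Moser-type estimate of Lemma \ref{le2.6} with $R$ of order one yields
\[
|u(x,t)|\,\le\, C(1+|x|+|t|^{1/2})^{M},\qquad M:=(n+d)/2.
\]

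Next I would establish, for every $s<t$, the semigroup representation
\[
u(x,t)\,=\, \int_{\mathbb R^{n}}\K_{t-s}(x,y)\,u(y,s)\,dy,
\]
the right-hand side being absolutely convergent thanks to the Gaussian decay of $\K_{t-s}$ (Lemma \ref{le2.2}) together with the polynomial bound on $u(\cdot,s)$. Setting $v(x,t):=u(x,t)-\int\K_{t-s}(x,y)u(y,s)\,dy$, I observe that $v$ is a weak solution of $\mathbb L v=0$ on $\mathbb R^n\times(s,\infty)$ with $v(\cdot,s)=0$ and polynomial growth; a Widder-type uniqueness result for $\mathbb L$, which reduces to the classical heat-equation case via the domination $0\le\K_t\le h_t$ (equivalently, via a parabolic maximum principle exploiting $V\ge 0$), then forces $v\equiv 0$.

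Finally I would send $s\to-\infty$ in the representation. By Lemma \ref{le2.2}(i), for any $N>0$,
\[
|u(x,t)|\,\le\, C_N\Bigl(1+\tfrac{\sqrt{t-s}}{\rho(x)}\Bigr)^{-N}\!\int_{\mathbb R^n}h_{t-s}(x-y)(1+|y|+|s|^{1/2})^M\,dy \,\le\, C_{N,x,t}(t-s)^{-N/2}|s|^{M/2},
\]
valid for $|s|$ sufficiently large. Taking $N>M$ and letting $s\to-\infty$ forces $u(x,t)=0$; since $(x,t)$ was arbitrary, this gives $u\equiv 0$.

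The main obstacle is the second step, namely rigorously justifying the semigroup representation for a global-in-time weak solution with only polynomial growth. This amounts to a Widder-type uniqueness statement for $\mathbb L v=0$ on $\mathbb R^n\times(s,\infty)$ with zero initial trace in the class of polynomially growing solutions. Thanks to $V\ge 0$ and the heat-kernel domination $0\le\K_t\le h_t$ from Lemma \ref{le2.2}, one can reduce it to the classical uniqueness theorem for the heat equation, but the reduction has to be performed with some care in order to handle the unbounded potential $V$ and the fact that $u$ is only a weak solution of $\mathbb L u=0$.
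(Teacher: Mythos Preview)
Your outline captures the correct heuristic---polynomial growth of $u$ combined with the super-polynomial decay factor $(1+\sqrt{t-s}/\rho(x))^{-N}$ in the kernel---but step~2 is a genuine gap, and it is essentially equivalent to the lemma you are trying to prove. The ``Widder-type uniqueness'' you invoke says: a weak solution of $\mathbb L v=0$ on $\mathbb R^n\times(s,\infty)$ with polynomial growth and vanishing trace at $t=s$ is identically zero. This is precisely a half-space version of Lemma~\ref{le2.7}; indeed, the paper later \emph{uses} Lemma~\ref{le2.7} to derive exactly such semigroup representations (see Lemma~\ref{le3.2}). Your proposed reduction via $0\le\mathcal K_t\le h_t$ does not work directly, since the difference $v=u-e^{-(t-s)\L}u(\cdot,s)$ need not have a sign, and a maximum-principle argument for weak solutions with unbounded $V\in B_q$ (not $L^\infty$) is itself nontrivial and would require work comparable to the lemma.

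The paper sidesteps this circularity with a cutoff. Instead of representing $u$ via the semigroup acting on a time-slice, it multiplies $u$ by a spacetime cutoff $\varphi$ supported in the parabolic ball $B_{3R/4}(0,0)$ and equal to $1$ on $B_{5R/8}(0,0)$. Since $u\varphi$ is compactly supported, the representation
\[
(u\varphi)(x,t)=\int_{\mathbb R^{n+1}}\Gamma_V(x,t;y,s)\,\bigl\{u\partial_s\varphi-2\nabla u\cdot\nabla\varphi-u\Delta\varphi\bigr\}\,dy\,ds
\]
via the fundamental solution $\Gamma_V$ is automatic---no uniqueness argument needed. For $(x,t)\in B_{R/2}(0,0)$ the integrand is supported on the annulus $5R/8\le|(y,s)|\le 3R/4$, and one then combines Kurata's pointwise upper bound on $\Gamma_V$ (the same $(1+|(x,t)-(y,s)|/\rho(x))^{-k}$ decay you wanted from $\mathcal K_{t-s}$), Caccioppoli's inequality to absorb $|\nabla u|$, and the weighted $L^2$ hypothesis to control $\int_{B_R}|u|^2$. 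Letting $R\to\infty$ kills $u$. Your steps~1 and~3 are morally present in this argument, but the cutoff replaces your step~2 with a one-line identity.
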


\begin{proof}
Fix an  $R\geq 10$, we let $\varphi\in C_0^{\infty}(B_{3R/4}(0,0))$ such that $0\leq \varphi\leq 1, \varphi=1$ on $B_{5R/8}(0,0)$,
and $|\nabla \varphi|\leq C/R, |\nabla^2 \varphi|\leq C/R^2.$ We have
$$
(\partial_t-\Delta +V)(u\varphi)=u\partial_t\varphi-2\nabla u\cdot \nabla\varphi -u\Delta\varphi \ \ \ {\rm in}\ \mathbb R^{n+1}.
$$
Then we have
$$
(u\varphi)(x,t)=\int_{\mathbb R^{n+1}}{\Gamma_V(x,t;y,s)}\{u\partial_s\varphi-2\nabla u\cdot \nabla\varphi -u\Delta\varphi\} dyds,
$$
where $\Gamma_V(x,t;y,s)$ denotes  the  fundamental solution of $\partial_t-\Delta+V$ in $\mathbb R^{n+1}$.
Hence,  for any $(x,t)\in B_{R/2}(0,0)$,
\begin{eqnarray}\label{e2.10}
|u(x,t)|&\leq& {C\over R}\int_{5R/8\leq |(y,s)|\leq 3R/4} | \Gamma_V(x,t;y,s)| \left(\abs{u(y,s)}+|2\nabla u(y,s)| +{|u(y,s)|\over R}\right) dyds  \nonumber\\
&\leq& {C\over R}\left(1+\frac{2}{R}\right)\left\{ \int_{5R/8\leq |(y,s)|\leq 3R/4}  |\Gamma_V(x,t;y,s) |^2 dyds\right\}^{1/2}
 \left\{ \int_{B_R(0, 0)}  |u(y,s) |^2 dyds\right\}^{1/2},
 \end{eqnarray}
 where we have used the H\"older inequality and Caccioppoli's inequality with parabolic type in \cite[Lemma 3.1]{GJ}.

From the upper bound of $\Gamma_V(x,t;y,s)$ in \cite{Kurata}, (see also in \cite{Shen1999, TH}), we have that for every $k>d/2$ and every  $(x,t)\in B_{R/2}(0, 0)$,
 \begin{eqnarray}\label{e2.11}
 \int_{5R/8\leq |(y,s)|\leq 3R/4}  | \Gamma_V(x,t;y,s)|^2 dyds  &\leq&   C_k\int_{5R/8\leq |y|\leq 3R/4}
 \abs{{1\over \left(1+ {|(x,t)-(y,s)|\over \rho(x)}\right)^k } {e^{-c\frac{\abs{x-y}^2}{t-s}}\over |t-s|^{n/2}} }^2 dyds\nonumber\\
 &\leq& C_k \rho(x)^{2k} R^{2-2k-n}.
 \end{eqnarray}
  Recall that the condition $B_{n/2}$ implies $V\in B_{q_0}$ for some $q_0>n/2$.
  By Lemma~\ref{le2.6}, we have
 $\displaystyle
 |u(y,s)|\leq C\left(\frac{1}{R^{n+2}}\int_{B_R(y,s)}|u(x,t)|^2dxdt\right)^{1/2}.$
 Then we get
 \begin{eqnarray*}
\int_{B_R(0, 0)}  |u(y,s) |^2 dyds   &\leq&  C\int_{B_R(0, 0)}  \left( {1\over R^{n+2}}\int_{B_R(y,s)} |u(x,t)|^2dxdt \right)dyds\\
&\leq&  CR^{n+d-1}\int_{\RR} {|u(x,t)|^2\over 1+|(x,t)|^{n+d}}dxdt
\leq  CC_d { R^{n+d-1}}.
 \end{eqnarray*}
 This, in combination with \eqref{e2.10} and \eqref{e2.11}, yields that for every $(x,t)\in B_{R/2}(0, 0)$,
\begin{eqnarray*}
|u(x,t)|&\leq& C\,  \rho(x)^k\left(1+{2\over R}\right) R^{\frac{d-1}{2}-k}.
 \end{eqnarray*}
Letting $R\to +\infty$, we obtain that $u(x,t)=0$ and therefore, $u=0$  in the whole $\mathbb R^{n+1}.$ The proof is complete.
\end{proof}

\begin{rem} Suppose $ V\in B_q(\RR)$ for some $q>n/2.$ For any $d\geq 0$,  one writes
\begin{eqnarray*}
{ {\mathcal H}_d(\L)}=\Big\{f\in W^{1,2}_{\rm loc}({\mathbb R}^n):
\L f =0 \ {\rm and}\   \ |f(x)|=O(|x|^d) \ \ {\rm as}\ |x|\rightarrow \infty\Big\}
\end{eqnarray*}
and
\begin{eqnarray*}
{ {\mathcal H}_{\L}}=\bigcup_{d:\ 0\leq d<\infty}
{ {\mathcal H}_d(\L)}.
\end{eqnarray*}
By  Lemma~\ref{le2.7}, it follows that
for any $d\geq 0$,
$$
{ {\mathcal H}_{\L}}={ {\mathcal H}_d(\L)}=\big\{0\big\}.
$$
See also   Proposition 6.5 of \cite{DY2}.
\end{rem}

 \medskip

\section{Proof of  the Main Theorem }
\setcounter{equation}{0}

 \bigskip

\subsection{The characterization of ${\rm BMO}_{\L}(\RR)$ in terms of Carleson-type measure}

To prove part (1) of Theorem~\ref{th1.1}, we need  the following lemma.

\begin{lem}\label{le3.8}\cite[Lemma 3.8]{DYZ}
 Suppose $V\in B_q$ for some $q> n.$  Let $\beta=1-{n\over q}$.
  For every $N>0$, there exist    constants $C=C_{N}>0$ and $c>0$ such that
  for all $x,y\in\RR$ and $t>0,$ the semigroup kernels ${\mathcal K}_t(x,y)$,   associated to $e^{-t{\L}}$,
   satisfy the following estimates:
   \begin{itemize}

\item[(i)]
\begin{eqnarray}\label{e3.11}
 | \nabla_x {\mathcal K}_t(x,y)| + | {t} \nabla_x \partial_t{\mathcal K}_t(x,y)|
 \leq C t^{-(n+1)/2}e^{-\frac{\abs{x-y}^2}{ct}}\left(1+\frac{\sqrt{t}}{\rho(x)}
+\frac{\sqrt{t}}{\rho(y)}\right)^{-N},
\end{eqnarray}
\item[(ii)] for $|h|<|x-y|/4,$
\begin{eqnarray*}\label{e3.12}
 | \nabla_x{\mathcal K}_t(x+h,y)- \nabla_x{\mathcal K}_t(x,y)|
 \leq C\left({|h|\over \sqrt{t}}\right)^{\beta}
t^{-(n+1)/2}e^{-\frac{\abs{x-y}^2}{ct}};
\end{eqnarray*}
\item[(iii)]  there is some $\delta>1$ such that
\begin{eqnarray*}\label{e3.13}
 \big|\sqrt t \nabla_x e^{-t{\L}}(1)(x) \big|\le C \min \left\{ \left(\frac{\sqrt t}{\rho(x)}\right)^\delta , \left(\frac{\sqrt t}{\rho(x)}\right)^{-N}\right\}.
\end{eqnarray*}
\end{itemize}
\end{lem}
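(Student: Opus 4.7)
The plan is to prove (i), (ii), (iii) by combining the Kato--Trotter perturbation formula with classical Gaussian heat-kernel gradient estimates, the $B_q$-integration bound \eqref{e2.2}, and the kernel estimates of Lemma~\ref{le2.2}. The hypothesis $q\ge n$ is used precisely to ensure $\delta=2-n/q\ge 1$ (so the $s$-integral of $|\nabla_x h_s|\cdot V$ converges near $s=0$) and $\beta=1-n/q\ge 0$ (so Shen-type $C^{1,\beta}$ regularity is available). Rapidly decreasing $\rho$-factors will then be installed by one further application of the semigroup identity $\K_t(x,y)=\int\K_{t/2}(x,z)\K_{t/2}(z,y)\,dz$ and Lemma~\ref{le2.2}.

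For part (i), I would start from the Duhamel identity
\[
\nabla_x\K_t(x,y)=\nabla_x h_t(x-y)-\int_0^t\!\!\int_{\RR}\nabla_x h_s(x-z)\,V(z)\,\K_{t-s}(z,y)\,dz\,ds.
\]
The leading term directly gives the Gaussian of order $t^{-(n+1)/2}$. For the error term, use $|\nabla_x h_s|\lesssim s^{-1/2}h_{cs}$ together with the Feynman--Kac domination $\K_{t-s}\le h_{c(t-s)}$, split the time integral at $s=t/2$, and in each half collapse the Gaussian convolution onto $V$ via the semigroup property of $h_s$; what remains is controlled by \eqref{e2.2}. To promote this to the claimed bound with $\rho$-decay, apply the semigroup split $\nabla_x\K_t(x,y)=\int\nabla_x\K_{t/2}(x,z)\K_{t/2}(z,y)\,dz$, using the just-obtained Gaussian bound on the first factor and Lemma~\ref{le2.2}(i) on the second to pick up the factor $(1+\sqrt t/\rho(x)+\sqrt t/\rho(y))^{-N}$. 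The estimate for $\sqrt{t}\nabla_x\partial_t\K_t$ is obtained identically, replacing one factor by $\partial_t\K_{t/2}$ and invoking \eqref{eq1}.

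For part (ii), the natural route is the same semigroup split
\[
\nabla_x\K_t(x+h,y)-\nabla_x\K_t(x,y)=\int_{\RR}\bigl[\nabla_x\K_{t/2}(x+h,z)-\nabla_x\K_{t/2}(x,z)\bigr]\K_{t/2}(z,y)\,dz,
\]
reducing matters to a $C^{1,\beta}_x$ estimate for $\K_{t/2}(\cdot,z)$ with exponent $\beta=1-n/q$, which is the parabolic counterpart of Shen's elliptic bound \cite{Shen} for $B_q$-potentials with $q\ge n$. The hypothesis $|h|<|x-y|/4$ is what preserves the Gaussian factor $e^{-|x-y|^2/(ct)}$ up to a harmless constant. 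For part (iii), the Kato--Trotter identity and $e^{t\Delta}(1)\equiv 1$ give
\[
1-e^{-t\L}(1)(x)=\int_0^t\!\!\int_{\RR}h_s(x-z)V(z)e^{-(t-s)\L}(1)(z)\,dz\,ds,
\]
so differentiating in $x$ and using $0\le e^{-(t-s)\L}(1)\le 1$ yields $|\nabla_x e^{-t\L}(1)(x)|\lesssim\int_0^t s^{-1/2}\!\int_{\RR}h_{cs}(x-z)V(z)\,dz\,ds$. For $t\le\rho(x)^2$, the first branch of \eqref{e2.2} gives $\sqrt{t}\,|\nabla_x e^{-t\L}(1)(x)|\lesssim(\sqrt{t}/\rho(x))^\delta$ with some $\delta>1$; for $t>\rho(x)^2$ one boosts the weak polynomial decay of the second branch of \eqref{e2.2} to arbitrary order $N$ via a further semigroup iteration that inserts the $(1+\sqrt{t}/\rho(x))^{-N}$ factor from Lemma~\ref{le2.2}.

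The bulk of the work lies in part (ii): (i) and (iii) are essentially perturbation bookkeeping once \eqref{e2.2} is available, whereas Hölder continuity of $\nabla_x\K_t$ requires genuine regularity theory for $-\Delta+V$ with $V\in B_q$, $q\ge n$, and tracking the exponent $\beta=1-n/q$ uniformly across scales --- on the level of the heat kernel itself rather than on stationary solutions --- is the delicate step.
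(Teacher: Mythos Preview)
The paper does not prove this lemma at all; it simply quotes it verbatim from \cite[Lemma 3.8]{DYZ}, so there is no in-paper argument to compare against. Your sketch is a faithful outline of how the proof in \cite{DYZ} actually goes, and the strategy is sound.

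A couple of minor remarks. In part (iii), once (i) is established the large-time bound is immediate: integrating \eqref{e3.11} in $y$ gives
\[
\big|\sqrt{t}\,\nabla_x e^{-t\L}(1)(x)\big|\le \int_{\RR}\sqrt{t}\,|\nabla_x\K_t(x,y)|\,dy
\lesssim \Big(1+\frac{\sqrt{t}}{\rho(x)}\Big)^{-N},
\]
so no additional ``semigroup iteration'' is needed there; the small-time case via \eqref{e2.2} is exactly as you describe, and the convergence of $\int_0^t s^{\delta/2-3/2}\,ds$ is precisely where $\delta=2-n/q>1$ (i.e.\ $q>n$) enters. Your diagnosis that (ii) carries the real content is correct: the $C^{1,\beta}$ regularity of $\K_t(\cdot,y)$ with $\beta=1-n/q$ is the parabolic counterpart of Shen's elliptic gradient estimates for $B_q$ potentials with $q\ge n$, and carrying this through uniformly in $t$ is the step that requires the machinery of \cite{DYZ} (or, ultimately, \cite{Shen,Kurata}) rather than just perturbation bookkeeping.
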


 \medskip

We  recall that the classical  Carleson
measure is closely related to  the   spaces ${\rm BMO}(\RR)$ and ${\rm BMO}_\L(\RR)$.
But, in this article, we should consider a similar Carleson measure, not the classical one. We say that a
measure $\mu$ defined on ${\mathbb R}^{n+1}_+$ is a $2-$Carleson measure  if there is a positive constant
$c$ such  that for each ball $B$, with radius $r_B$, in ${\mathbb R}^{n}$,
\begin{equation}\label{e3.6}
\mu({\widehat B})\leq c|B|,
\end{equation}
where $\displaystyle {\widehat B}=\{(x,t):x\in B, 0\le t\le r_B^2\}$ is the $2-$tent over $B$.
The smallest bound $c$  in (\ref{e3.6}) is defined to  be the norm of
$\mu$, and is denoted by
$
\interleave\mu\interleave_{2car}$. With a similar argument as in \cite{DGMTZ},  we  know that
 for every $f\in {\rm BMO}_\L(\RR)$,
\begin{equation}\label{e3.18}
 \mu_{\nabla_t, f}(x,t)=t| \partial_te^{-t{\L}}(f)(x)|^2 {dx dt}
\end{equation}
is a 2-Carleson measure with $\interleave \mu_{\nabla_t, f}\interleave_{2car} \leq C\|f\|_{{\rm BMO_\L}(\RR)}$.

Similarly, we know that
$$
 \mu'_{\nabla_t, f}(x,t)=\abs{t^2\frac{de^{-s{\L}}}{ds}\Big|_{s=t^2}(f)(x)}^2 {dx dt\over t}
$$
is a Carleson measure with $f\in \rm BMO_\L(\RR)$. The above measure $\mu'_{\nabla_t, f}(x,t)$ was appeared in \cite[(1.12)]{DGMTZ} and its properties were studied extensively at there.

Let us consider  the square functions
${\mathcal G}f$ and ${\mathcal S}f$ given  by
$$
{\mathcal G}(f)(x)=\Big(\int_0^{\infty}
|\partial_t e^{-t \L}f(x)|^2{dt}\Big)^{1/2}
$$
and
$$
{\mathcal S}(f)(x)=\Big(\int_0^{\infty}
|t\partial_t e^{-t \L}f(x)|^2{dt\over t}\Big)^{1/2}.
$$
By the spectral theory, we have the following identities:
\begin{equation}\label{eqsquare}
\norm{{\mathcal G}(f)}_{L^2(\RR)}=\frac{\sqrt 2}{2}\norm{\L^{1/2}f}_{L^2(\RR)},
\end{equation}
and
\begin{equation}\label{eqsquares}
\norm{{\mathcal S}(f)}_{L^2(\RR)}={1\over 2}\norm{f}_{L^2(\RR)}.
\end{equation}

\begin{proof}[Proof of part (1) of Theorem~\ref{th1.1}]
 Recall that the condition $V\in B_n$   implies $V\in B_{q_0}$ for some $q_0>n$.
From
Lemmas~\ref{le2.2} and  \ref{le3.8}, we see  that $u(x,t)=e^{-t{\L}}f(x)\in C^1({\mathbb R}^{n+1}_+)$.
Let us fix a ball $B=B(x_B, r_B)$. From \eqref{e3.18},
it suffices to  show that there exists a constant $C>0$ such that
\begin{align*}
 \int_0^{r_B^2}\int_B |\nabla_{x}e^{-t { \L}}f(x)|^2{dxdt}
&\le C|B|\norm{f}_{{\rm BMO_\L}(\RR)}^2.
\end{align*}
To do this, we split the function $f$ into local, global, and constant parts as follows
$$f=(f-f_{2B})\chi_{2B}+(f-f_{2B})\chi_{(2B)^c}+f_{2B}=f_1+f_2+f_3,
$$
where $2B=B(x_B, 2r_B)$.

Since the Riesz transform $\nabla \L^{-1/2}$ is bounded on $L^2(\RR)$, by \eqref{eqsquare},   we have
\begin{align*}
 \int_0^{r_B^2}\int_B \abs{\nabla_{x}e^{-t{ \L}}f_1(x) }^2{dxdt}
 &\le \int_0^{r_B^2}\int_{\Real^n} \abs{\nabla_{x}\L^{-{1/2}}\L^{{1/2}}e^{-t {\L}}f_1(x) }^2{dxdt}\\
&\le C \int_0^{\infty}\int_{\RR} \abs{\partial_t e^{-t{ \L}}\left(\L^{-1/2}f_1\right)(x) }^2{dxdt}\\
&\le C\norm{\L^{1/2}\L^{-1/2}f_1}_{L^2(\Real^n)}^2=C\int_{2B}\abs{f(x)-f_{2B}}^2dx\\
&\le C|B|\norm{f}_{{\rm BMO_\L}(\RR)}^2.
\end{align*}

To estimate the global term, we use \eqref{e3.11} in  Lemma~\ref{le3.8}  and then the standard argument as in Theorem 2 of \cite{DGMTZ}   shows that
for $x\in B$ and $t<r^2_B$,
\begin{align*}
&| \nabla_x e^{-t \L}f_2(x) |
 \le C\int_{(2B)^c} \abs{f(y)-f_{2B}}\frac{1}{\abs{x_B-y}^{n+1}}dy\\
&\le C\sum_{k=2}^\infty \frac{1}{(2^kr_B)^{n+1}}\left[\int_{2^kB\backslash 2^{k-1}B}
\abs{f(y)-f_{2^kB}}dy+(2^kr_B)^n\abs{f_{2^kB}-f_{2B}}\right]\\
&\le C\Big({1\over r_B}\Big)\sum_{k=2}^\infty 2^{-k}\left[\norm{f}_{{\rm BMO_\L}(\RR)}
+k\norm{f}_{{\rm BMO_\L}(\RR)}\right]\le C\Big({1\over r_B}\Big)\norm{f}_{{\rm BMO_\L}(\RR)},
\end{align*}
which yields
\begin{align*}
 \int_0^{r_B^2}\int_B \abs{\nabla_{x}e^{-t { \L}}f_2(x) }^2 dxdt\le C|B| r_B^{-2}\int_0^{r_B^2}
1{dt} \norm{f}^2_{{\rm BMO_\L}(\RR)}\leq  C|B|\norm{f}^2_{{\rm BMO_\L}(\RR)}.
\end{align*}
It remains to estimate the constant term $f_3=f_{2B}$, for which we make use of (i) and (iii) of Lemma~\ref{le3.8}.
  Assume first that $r_B\le \rho(x_B)$. By Lemma~\ref{le2.1}, $\rho(x)
\sim \rho(x_B)$ for $x\in B$, we have
 \begin{eqnarray}\label{e3.19}
 \int_0^{r_B^2}\int_B  |\nabla_{x}e^{-t{\L}}f_3(x)|^2{dxdt}
&\leq&
 {\abs{f_{2B}}^2}  \int_0^{r_B^2}\int_B  \big(\sqrt t/\rho(x)\big)^{2\delta} {dxdt\over t}\nonumber\\
&\leq&
 C|B|\abs{f_{2B}}^2   \big(r_B/\rho(x_B)\big)^{2\delta}\nonumber\\
 &\leq&
 C|B|\|f\|_{{\rm BMO_\L}(\RR)}^2\Big(1+\log {\rho(x_B)\over r_B}\Big)^2   \big(r_B/\rho(x_B)\big)^{2\delta}\nonumber\\
  &\leq&
 C|B|\|f\|_{{\rm BMO_\L}(\RR)}^2.
\end{eqnarray}
Suppose finally that $r_B> \rho(x_B)$,  we use an argument as in Theorem 2 of \cite{DGMTZ} to select a finite family
of critical balls $\set{Q_k}$ such that $B\subset \cup Q_k$ and $\sum\abs{Q_k}\le \abs{B}$.
 Then, using the fact that  $\abs{f_{2B}}\le \norm{f}_{\rm BMO_\L(\RR)},$
 we can bound the left hand side of \eqref{e3.19} by
\begin{align*}
&C{\norm{f}_{\rm BMO_\L}^2 }\sum_k\left(\int_0^{\rho(x_k)^2}\int_{Q_k}
\left({\sqrt t\over {\rho(x_k)}}\right)^{2\delta}{dxdt\over t}+\int_{\rho(x_k)^2}^\infty
\int_{Q_k}\left(\frac{\sqrt t}{\rho(x_k)}\right)^{-2N}{dxdt\over t}\right)\\
&\le  C{\norm{f}_{{\rm BMO_\L}(\RR)}^2 }\sum_k \abs{Q_k}\le C|B|\norm{f}_{{\rm BMO_\L}(\RR)}^2,
\end{align*}
which establishes the proof of part (1) of Theorem 1.1.
 \end{proof}

\bigskip

\subsection{Existence  of boundary values of ${\mathbb L}$-carolic functions}
In this section, we will give the proof of part (2) of Theorem \ref{th1.1}.

 First, we need some lemmas for  preparation.
\begin{lem}\label{le3.1} For every  $u\in {\rm TMO_\L}(\Real_+^{n+1})$ and
 for every $k\in{\mathbb N}$, there exists a constant $C_{k,n}>0$ such that
\begin{equation*} \label{dd}
\int_{\RR}{|u(x,{1/k})|^2\over (1+|x|)^{2n}}  dx\leq C_{k,n} <\infty,
\end{equation*}
hence $u(x, 1/k)\in L^2((1+|x|)^{-2n}dx)$. Therefore for  all $k\in{\mathbb N}$, $e^{-t{\L}}(u(\cdot, {1/k}))(x)$ exists
everywhere in ${\mathbb R}^{n+1}_+$.
\end{lem}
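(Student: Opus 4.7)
My plan is to combine the parabolic subsolution mean-value inequality of Lemma~\ref{le2.6} with the Carleson control on $|\nabla u|^2$ provided by the TMO seminorm. First, applying Lemma~\ref{le2.6} at the point $(x,1/k)$ with parabolic radius $r_0 = 1/\sqrt{2k}$ (so that $B_{r_0}(x,1/k) = B(x,1/\sqrt{2k})\times(1/(2k),1/k] \subset \RR\times(0,\infty)$) yields
\begin{equation*}
|u(x,1/k)|^2 \leq C\,k^{(n+2)/2}\int_{B_{r_0/2}(x,1/k)} |u(y,s)|^2\,dy\,ds,
\end{equation*}
where the cube on the right has spatial radius $1/(2\sqrt{2k})$ and time interval $(7/(8k), 1/k]$. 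Integrating against $(1+|x|)^{-2n}\,dx$, applying Fubini, and noting that $(1+|x|)\sim(1+|y|)$ (with a $k$-dependent constant) whenever $|x-y|\leq 1/(2\sqrt{2k})$, reduces matters to showing
\begin{equation*}
I := \int_{7/(8k)}^{1/k}\int_{\RR}\frac{|u(y,s)|^2}{(1+|y|)^{2n}}\,dy\,ds < \infty.
\end{equation*}

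To bound $I$, I would decompose $\RR = \bigcup_{j\geq 0}A_j$ with $A_0 = B(0,2)$, $A_j = B(0,2^{j+1})\setminus B(0,2^j)$ for $j\geq 1$, and $B_j := B(0,2^{j+1})$. On $A_j$ the weight is of order $2^{-2nj}$. Writing $u = (u - u_{B_j}(s)) + u_{B_j}(s)$ and applying the spatial Poincar\'e inequality on $B_j$ at each fixed time $s$, the fluctuation contribution is bounded by a constant times $2^{2j}\int_{B_j}|\nabla_x u(\cdot,s)|^2$, which in turn is controlled by the TMO seminorm applied to the ball of radius $R_j^{\ast} := \max(2^{j+1}, 1/\sqrt{k})$ (chosen large enough that $[0, (R_j^{\ast})^2] \supseteq [7/(8k), 1/k]$); this yields
\begin{equation*}
\int_{7/(8k)}^{1/k}\int_{B_j}|u(y,s) - u_{B_j}(s)|^2\,dy\,ds \leq C_k\,2^{(n+2)j}\,\|u\|^2_{\mathrm{TMO}_\L}.
\end{equation*}

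The main obstacle is the spatial average $u_{B_j}(s)$, because the TMO seminorm is invariant under adding constants to $u$. I would handle it by Poincar\'e-chaining across dyadic scales: telescoping $u_{B_j}(s) - u_{B_1}(s) = \sum_{i=1}^{j-1}(u_{B_{i+1}}(s) - u_{B_i}(s))$, bounding each step by Poincar\'e on $B_{i+1}$, and applying Cauchy--Schwarz on the sum yields
\begin{equation*}
|u_{B_j}(s) - u_{B_1}(s)|^2 \leq C j\sum_{i=1}^{j-1}\frac{R_{i+1}^2}{|B_i|}\int_{B_{i+1}}|\nabla_x u(y,s)|^2\,dy,
\end{equation*}
and integrating in $s$ over $[7/(8k), 1/k]$ and reapplying the TMO bound on each $B_{i+1}$ gives $\int_{7/(8k)}^{1/k}|u_{B_j}(s) - u_{B_1}(s)|^2\,ds \leq C_k\,j\,2^{2j}\,\|u\|^2_{\mathrm{TMO}_\L}$. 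The base integral $\int_{7/(8k)}^{1/k}|u_{B_1}(s)|^2\,ds$ is finite since $u \in W^{1,2}_{\mathrm{loc}}(\mathbb R^{n+1}_+)$ by the definition of weak solution of $\mathbb{L}u=0$ recalled in the excerpt. Putting everything together,
\begin{equation*}
\int_{7/(8k)}^{1/k}\int_{A_j}|u(y,s)|^2\,dy\,ds \leq C_k\,(1+j)\,2^{(n+2)j}\,\|u\|^2_{\mathrm{TMO}_\L} + C_u\,2^{nj},
\end{equation*}
and multiplying by $2^{-2nj}$ and summing over $j\geq 0$ produces a convergent series, since $n\geq 3$ makes both $\sum_j (1+j)\,2^{(2-n)j}$ and $\sum_j 2^{-nj}$ finite; this proves $I<\infty$ and hence the desired weighted $L^2$ estimate.

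Finally, for the assertion that $e^{-t\L}(u(\cdot,1/k))(x)$ exists everywhere in $\mathbb R^{n+1}_+$: the Gaussian upper bound in Lemma~\ref{le2.2}(i) gives $\int_{\RR}|\mathcal K_t(x,y)|^2(1+|y|)^{2n}\,dy<\infty$ for every fixed $x\in\RR$ and $t>0$, so Cauchy--Schwarz combined with the weighted $L^2$ bound on $u(\cdot,1/k)$ just established shows that $\int_{\RR}\mathcal K_t(x,y)\,u(y,1/k)\,dy$ converges absolutely.
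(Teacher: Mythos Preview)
Your proof is correct, but it takes a genuinely different route from the paper's.

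The paper's argument is path-based: it subtracts the bounded quantity $u(x/|x|,1/k)$ (using $u\in C^1$), then telescopes along the spacetime path $(x,1/k)\to (x,|x|)\to (x/|x|,|x|)\to (x/|x|,1/k)$. The two time-direction legs are controlled by the pointwise estimate $|\partial_t u(x,t)|\le Ct^{-1/2}\|u\|_{{\rm TMO}_\L}$, obtained by applying Lemma~\ref{le2.6} to $\partial_t u$ (which is again an $\mathbb L$-caloric function since $V$ is time-independent); the radial leg at height $|x|$ is handled by a direct change to polar coordinates and the Carleson bound on a dyadic decomposition.

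Your argument is average-based: you apply Lemma~\ref{le2.6} to $u$ itself (not to $\partial_t u$) merely to trade the single time slice $t=1/k$ for an integral over $[7/(8k),1/k]$, and then run a purely spatial Poincar\'e-plus-chaining argument on dyadic balls at each fixed time $s$, summing with the weight $2^{-2nj}$.

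Both approaches ultimately feed the Carleson/TMO control into Lemma~\ref{le2.6}, but in opposite ways. The paper's method exploits the parabolic structure (that $\partial_t u$ is again a solution) to get a clean pointwise derivative bound, and its constant is explicit in $\|u\|_{{\rm TMO}_\L}$ and $\sup_{|\omega|=1}|u(\omega,1/k)|$. Your method avoids ever differentiating $u$ and would adapt to settings where $\partial_t u$ is not known to be a solution; the price is a less explicit $u$-dependent constant coming from the base term $\int_{7/(8k)}^{1/k}|u_{B_1}(s)|^2\,ds$. The final Cauchy--Schwarz justification for the existence of $e^{-t\L}(u(\cdot,1/k))$ agrees with the paper's (which simply cites the fast decay of $\mathcal K_t(x,\cdot)$ from Lemma~\ref{le2.2}).
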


\begin{proof}  Since $u\in C^{1}({\mathbb R}^{n+1}_+)$, it  reduces to show that for every $k\in{\mathbb N},$
\begin{eqnarray}\label{e3.1}
\int_{|x|\geq 1} {|u(x,{1/k})- u(x/|x|, 1/k) |^2 \over (1+|x|)^{2n}} dx\leq C_k\|u\|^2_{{\rm TMO_\L}(\Real_+^{n+1})}<\infty.
 \end{eqnarray}
To do this, we write
\begin{align*}
&\hspace{-0.3cm}u(x, 1/k)- u(x/|x|, 1/k)\\&=\big[u(x, 1/k)- u(x, |x|)\big]
 +\big[u(x, |x|)-u(x/|x|, |x|)\big] +\big[u(x/|x|, |x|)-u(x/|x|, 1/k)\big].
 \end{align*}
Let
  \begin{eqnarray*}
 I=  \int_{|x|\geq 1 } {|u(x, 1/k)- u(x, |x|) |^2 \over (1+|x|)^{2n}} dx,
 \end{eqnarray*}
  \begin{eqnarray*}
 II=  \int_{|x|\geq 1 } {|u(x, |x|)-u(x/|x|, |x|) |^2 \over (1+|x|)^{2n}} dx,
 \end{eqnarray*}and
  \begin{eqnarray*}
 III=  \int_{|x|\geq 1 } {|u(x/|x|, |x|)-u(x/|x|, 1/k) |^2 \over (1+|x|)^{2n}} dx.
 \end{eqnarray*}

For $\abs{x}\ge 1$ and $t>0$, let $r^2=t/4$. We use  Lemma~\ref{le2.6} for $\partial_t u$ and Schwarz's inequality to obtain
 \begin{eqnarray}\label{e3.2}
 \big| \partial_t u(x, t)\big| &\leq& C\Big({1\over r^{n+2}} \int_{t-r^2}^t\int_{B(x, r)}
 | \partial_s u(y, s) |^2 {dyds } \Big)^{1/2}\nonumber\\
  &\leq& C\Big({1\over t^{\frac{n}{2}+2}} \int_{t-r^2}^t\int_{B(x, \sqrt t/2)}
 s| \partial_s u(y, s) |^2 {dyds } \Big)^{1/2}\nonumber\\
  &\leq& C t^{-1}\Big({1\over \abs{B(x, \sqrt t)}} \int_{0}^{t}\int_{B(x,\sqrt t)}
  |  s\partial_s u(y, s) |^2 {dyds\over s } \Big)^{1/2}
  \nonumber\\
  &\leq&  C  t^{-1}\|u\|_{{\rm TMO_\L}(\Real_+^{n+1})},
  \end{eqnarray}
which gives
 \begin{eqnarray}\label{e3.3}
 | u(x, 1/k)-u(x, |x|) |  =  \Big| \int_{1/k}^{|x|}   \partial_t u(x, t) dt \ \Big| \leq C{\log(k|x|)} \|u\|_{{\rm TMO_\L}(\Real_+^{n+1})}.
 \end{eqnarray}
It follows that
   \begin{eqnarray*}
 I+III
 &\leq& C\|u\|^2_{{\rm TMO_\L}(\Real_+^{n+1})} \int_{|x|\geq 1 } {1\over (1+|x|)^{2n}} \log^2(k |x|)   dx  \\
 &\leq& C  \|u\|^2_{{\rm TMO_\L}(\Real_+^{n+1})}.
  \end{eqnarray*}

For the term $II,$  we  have that for any $x\in \RR,$
 $$
  u(x, |x|)- u(x/|x|, |x|)  =\int_1^{|x|}  D_r u(r\omega, |x|)  dr, \ \ \ \ x=|x| \omega.
 $$
 Let $B=B(0, 1)$  and  $2^mB=B(0, 2^m)$.
Note that for every $m\in{\mathbb N}$,  we have
  \begin{align*}
   \int_{2^mB\backslash 2^{m-1}B} \left| \int_1^{|x|}\left|    D_r  u(r\omega, |x|) \right|   dr \right|^2 dx
  &=     \int_{2^{m-1}}^{2^{m}} \int_{|\omega|=1} \left| \int_1^{\rho}     D_r u(r\omega, \rho)    dr \right|^2 \rho^{n-1}  d\omega d\rho   \nonumber\\
      &\leq   2^{mn-m} \int_{2^{m-1}}^{2^{m}} \int_{|\omega|=1}\int_1^{ 2^{m}}    |  D_r u(r\omega, \rho)  |^2 dr d\omega d\rho   \nonumber\\
	  &\leq    2^{mn-m} \int_{2^{m-1}}^{2^{m}} \int_{2^mB\backslash B}   |  \nabla_y u(y, t)  |^2 |y|^{1-n} dy dt\nonumber\\
	  &\leq    2^{mn-m} \int_{2^{m-1}}^{2^{m}} \int_{2^mB}   |  \nabla_y u(y, t)  |^2  dy dt,
  \end{align*}
  which gives
  \begin{align*}
 \int_{2^mB\backslash 2^{m-1}B}   | u(x, |x|) - u(x/|x|, |x|) |^2    dx
    &\leq     C2^{2mn-m} \left( {1\over |2^mB|}  \int_{0}^{2^{2m}} \int_{2^mB }    |  \nabla_y u(y, t)  |^2
	{dydt}   \right) \nonumber
	 \\
    &\leq  C  2^{2mn-m}\|u\|^2_{{\rm TMO_\L}(\Real_+^{n+1})}.
  \end{align*}
Therefore,
 \begin{align*}
II
  &\leq C \sum_{m=1}^{\infty}  {1\over 2^{2mn}}
   \int_{2^mB\backslash 2^{m-1}B}    | u(x, |x|) - u(x/|x|, |x|) |^2    dx\leq C\|u\|^2_{{\rm TMO_\L}(\Real_+^{n+1})}.
  \end{align*}
  Combining estimates of $I, II$ and $III$, we have obtained \eqref{e3.1}.

  Note that by Lemma~\ref{le2.2},  if
  $V\in B_q$ for some $q> n/2$, then  the semigroup kernels ${\mathcal K}_t(x,y)$, associated to $e^{-t{\L}}$,
  decay
faster than any power of $1/|x-y|$.
Hence,    for  all $k\in{\mathbb N}$, $e^{-t{\L}}(u(\cdot, {1/k}))(x)$ exists
everywhere in ${\mathbb R}^{n+1}_+$.
This completes the proof.
  \end{proof}

\begin{lem}\label{le3.2} For every  $u\in {\rm TMO_\L}(\Real_+^{n+1})$,
we have that for every $k\in{\mathbb N}$,
\begin{equation*}
u(x, t+{1/k})=e^{-t{\L}}\big(u(\cdot, {1/k})\big)(x), \ \ \ \ x\in\RR, \  t>0.
\end{equation*}
\end{lem}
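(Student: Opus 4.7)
The plan is to establish the identity by a uniqueness argument applied to the difference
$$w(x,t) := u(x,t+1/k) - e^{-t\L}\big(u(\cdot,1/k)\big)(x), \qquad (x,t)\in\Real_+^{n+1}.$$
By Lemma~\ref{le3.1} we have $u(\cdot,1/k)\in L^2((1+|x|)^{-2n}dx)$, and the Gaussian-type estimates of Lemma~\ref{le2.2} make $e^{-t\L}(u(\cdot,1/k))(x)$ well defined on $\Real_+^{n+1}$ and a weak solution of $\mathbb{L}v=0$; consequently $w$ is a weak solution of $\mathbb{L}w=0$ on $\Real_+^{n+1}$ and the lemma amounts to showing $w\equiv 0$. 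Continuity of $u$ gives $u(x,t+1/k)\to u(x,1/k)$ as $t\to 0^+$, while Lemma~\ref{le2.3} (controlling $\K_t-h_t$) combined with the standard approximate-identity property of the classical heat semigroup on continuous functions gives $e^{-t\L}(u(\cdot,1/k))(x)\to u(x,1/k)$; hence $w(x,0)=0$.

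Having the vanishing trace at $t=0$, I would zero-extend $w$ across $t=0$,
$$\overline{w}(x,t):=\begin{cases} w(x,t), & t\ge 0,\\ 0, & t<0,\end{cases}$$
and verify that $\overline{w}\in W^{1,2}_{\mathrm{loc}}(\Real^{n+1})$ with $\mathbb{L}\overline{w}=0$ holding weakly on all of $\Real^{n+1}$. The routine check proceeds by testing against $\psi\cdot\eta_\epsilon$ for $\psi\in C_0^1(\Real^{n+1})$ and $\eta_\epsilon$ a cutoff killing $t\le\epsilon$; in the limit $\epsilon\to 0^+$ the only boundary contribution produced by integration by parts is $-\int_{\RR} w(x,0)\psi(x,0)\,dx$, which vanishes by the previous step.

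The main obstacle will be verifying the growth hypothesis \eqref{e2.9} of Lemma~\ref{le2.7} for $\overline{w}$. The semigroup piece $e^{-t\L}(u(\cdot,1/k))(x)$ is easy: combine the Gaussian decay of $\K_t$ in Lemma~\ref{le2.2} with the weighted $L^2$ bound from Lemma~\ref{le3.1}. The delicate term is $u(x,t+1/k)$ itself, since the ${\rm TMO_\L}(\Real_+^{n+1})$ norm only controls the $L^2$-oscillation of $\nabla u$, not $u$ pointwise. My plan is to first upgrade Lemma~\ref{le3.1} from the slice $t=1/k$ to general time slices: using the pointwise estimate $|\partial_s u(x,s)|\lesssim s^{-1/2}\|u\|_{{\rm TMO_\L}(\Real_+^{n+1})}$ from \eqref{e3.2}, I can integrate in $s$ to transfer the weighted bound, obtaining $\|u(\cdot,s)\|_{L^2((1+|x|)^{-2n}dx)}\lesssim 1+\sqrt{s}$. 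Applying Lemma~\ref{le2.6} (parabolic Moser) on unit parabolic balls then converts this $L^2$ control into a pointwise polynomial bound of the form $|u(x,s)|\lesssim (1+|x|)^n(1+\sqrt s)\|u\|_{{\rm TMO_\L}(\Real_+^{n+1})}$. Choosing $d$ large enough makes \eqref{e2.9} finite. Since $V\in B_q$ with $q\ge n\ge(n+1)/2$, Lemma~\ref{le2.7} applies to $\overline{w}$ and forces $\overline{w}\equiv 0$, yielding $w\equiv 0$ on $\Real_+^{n+1}$ and thereby the stated identity.
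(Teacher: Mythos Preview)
Your proposal follows essentially the same strategy as the paper: set $w(x,t)=u(x,t+1/k)-e^{-t\L}(u(\cdot,1/k))(x)$, show $w(x,0)=0$ via Lemma~\ref{le2.3} and the approximate-identity property of the heat kernel, extend $w$ across $t=0$ to a global weak solution on $\mathbb{R}^{n+1}$, verify the growth hypothesis \eqref{e2.9}, and invoke Lemma~\ref{le2.7}.

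Two minor differences are worth flagging. First, the paper extends $w$ by \emph{odd reflection} $\overline{w}(x,t)=-w(x,-t)$ for $t<0$, whereas you extend by zero; for a parabolic operator the zero extension is in fact the cleaner choice (odd reflection sends a forward caloric function to a backward caloric one, which is why the paper has to speak vaguely of an ``extension operator $\overline{\mathbb{L}}$''), and your cutoff argument correctly shows that $w(x,0)=0$ kills the boundary term so that $\mathbb{L}\overline{w}=0$ holds weakly on all of $\mathbb{R}^{n+1}$. Second, for the growth bound on $u(x,t+1/k)$ you pass through Moser's estimate (Lemma~\ref{le2.6}) to get a pointwise polynomial bound; the paper takes a slightly more direct route by writing $u(x,t+1/k)=u(x,1/k)+\int_{1/k}^{t+1/k}\partial_s u(x,s)\,ds$ and using $|\partial_s u(x,s)|\le C s^{-1/2}\|u\|_{{\rm TMO_\L}}$ together with Lemma~\ref{le3.1}, avoiding the extra Moser step. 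Either route yields \eqref{e2.9} for some finite $d$, so both arguments close.
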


 \begin{proof}
 Since $u(x, \cdot)$ is continuous on $\Real_+$, we have that $\lim_{t\to 0^+}  u(x,t+{1}/{k})= u(x,{1}/{k}).$
Let us first show that for every $k\in{\mathbb N}$,
 \begin{equation}\label{e3.4}
 \lim_{t\to 0^+}e^{-t \L}(u(\cdot, 1/k))(x)=u(x,1/k),  \ \   x\in \Real^n.
\end{equation}
One writes
\begin{eqnarray*}
e^{-t \L}(u(\cdot, 1/k) )(x)&=&
  e^{-t \L}  (u(\cdot, 1/k)1\!\!1_{|x-\cdot|> 1})(x)\\[1pt]
 & &+
\big(e^{-t \L}- e^{t {\Delta}}\big) (u(\cdot, 1/k)1\!\!1_{|x-\cdot|\leq 1})(x)+
 e^{t {\Delta}}  (u(\cdot, 1/k)1\!\!1_{|x-\cdot|\leq 1})(x) \\[1pt]
&= &
  I(x,t)+II(x,t)+III(x,t).
\end{eqnarray*}
 By Lemma~\ref{le3.1}, we have that $u(x, 1/k)\in L^1((1+|x|)^{-2n}dx)$.
From Lemma~\ref{le2.2}, estimates of  the   semigroup kernels ${\mathcal K}_t(x,y)$, associated to $e^{{-t\L}}$,   show  that
\begin{eqnarray*}
\abs{I(x,t)}&\leq&  C_n\int_{|x-y|> 1}t^{-n/2}e^{-\frac{\abs{x-y}^2}{ct}} \left(1+\frac{\sqrt t}{\rho(x)}+\frac{\sqrt t}{\rho(x)}\right)^{-n}|u(y, 1/k)|dy\\
 &\le& C_n e^{-\frac{1}{ct}}\rho(x)^n\int_{|x-y|> 1} \frac{1}{1+| x-y|^{2n}}  \abs{u(y, 1/k)}  dy\\
 &\le& C_ne^{-\frac{1}{ct}}\rho(x)^n\int_{\RR} \frac{1}{1+| y|^{2n}}  \abs{u(y, 1/k)}  dy,
\end{eqnarray*}
hence $
 \lim_{t\to 0^+}I(x,t)=0.$

For the term $II(x,t)$, it follows from Lemma~\ref{le2.3}  that  there exists a nonnegative Schwartz  function $\varphi$
 such that
 \begin{eqnarray*}
\abs{II(x,t)} &\leq&  C  \int_{|x-y|\leq 1} \abs{{\mathcal K}_t(x,y)-h_t(x-y)}  |u(y, 1/k)| dy \\
&  \leq& C \| u(\cdot, 1/k)  \|_{L^{\infty}( B(x, 1))}\left({\sqrt{t}\over \rho(x)}\right)^{2-n/q} \int_{\RR}
 \varphi_t(x-y)    dy  \\
&\leq&C \left({\sqrt{t}\over \rho(x)}\right)^{2-n/q}\| u(\cdot, 1/k)  \|_{L^{\infty}( B(x, 1))},
\end{eqnarray*}
which implies that
$
 \lim_{t\to 0^+}II(x,t)=0.$
  Finally, we can follow a standard  argument as in the proof of Theorem 1.25, Chapter 1 of \cite{SW} to
 show  that for every  $x\in \Real^n,$
$
\lim_{t\to 0^+}e^{t {\Delta}}  (u(\cdot, 1/k)1\!\!1_{|x-\cdot|\leq 1})(x)= u(x,{1}/{k}),
$
and so \eqref{e3.4} holds.

In order to prove our lemma, we can use Lemma~\ref{le2.7}. Set $w(x,t)=e^{-t \L}(u(\cdot, 1/k))(x)-u(x,t+1/k)$.  The function  $w$ satisfies ${\mathbb L}w =0, w(x,0)=0$.
Define,
 \begin{align*}
{\overline w}(x,t)=\left\{
 \begin{array}{rrl}
  w(x,t),&\ \ \ &t\ge 0,\\[6pt]
  w(x,-t),& \ \ &t<0.
 \end{array}
 \right.
 \end{align*}
Then ${\overline w}$ satisfies
 $$
{\overline {\mathbb L}} {\overline w}(x,t)=0, \ \ \ \ \ \ (x,t)\in  {\mathbb R}^{n+1},
 $$
where  ${\overline {\mathbb L}}$ is an extension operator of ${ {\mathbb L}}$ on $\Real^{n+1}$.
Observe that if $V(x)\in B_q(\Real^{n})$ with $q> n/2,$ then it can be verified
that  $V(x,t)=V(x)\in B_q$ on $\Real^{n+1}$.
Next, let us verify \eqref{e2.9}. One writes
\begin{eqnarray*}
\int_{{\mathbb R}^{n+1}} {|{\overline w}(x,t)|^2\over 1+ |(x,t)|^{6(n+1)}}dxdt
& \leq& C\Big(\int_{{\mathbb R}^{n+1}_+} {|e^{-t \L}(u(\cdot, 1/k))(x)|^2\over 1+ (|x|^2+t)^{3(n+1)}}dxdt \\
&& +\int_{{\mathbb R}^{n+1}_+} {|u(x, 1/k)|^2\over 1+ (|x|^2+t)^{3(n+1)}}dxdt \\
&& +\int_{{\mathbb R}^{n+1}_+} {|u(x, t+1/k)-u(x, 1/k)|^2\over 1+ (|x|^2+t)^{3(n+1)}}dxdt\Big)=C(IV+V+VI). \\
\end{eqnarray*}
Observe that   if $t\geq 1,$ then  by Minkowski inequality and
  Lemma~\ref{le2.2},
\begin{eqnarray*}
 &&\left(\int_{\RR} {|e^{-t \L}(u(\cdot, 1/k))(x)|^2\over (1 + |x|^2)^{\frac {3n+1}{2}}}   dx\right)^{1/2}\\
 & =&  \left(\int_{\RR} {\abs{\int_{\RR}\K_t(x,y)u(y, 1/ k)dy}^2 }   \frac{dx}{ (1 + |x|^2)^{\frac {3n+1}{2}}}\right)^{1/2}\\
 &\le &\int_{\RR} \left({\int_{\RR}\abs{\K_t(x,y) }^2  \frac{dx}{ (1 + |x|^2)^{\frac {3n+1}{2}}}}\right)^{1/2}\abs{u(y, 1/ k)} dy\\
 &\leq& C\int_{\RR} \left(\int_{\RR}{1\over (1+ |x|)^{3n+1}}
 t^{-n}\left(1+\frac{\abs{x-y}}{\sqrt t}\right)^{-4n} dx \right)^{1/2} |u(y,1/k)|dy    \\
 &\leq& C\int_{\RR} \left(\int_{\RR}{1\over (1+ |x|)^{3n+1}}
 {t^{n} \over   (1+|x-y|)^{4n}} dx \right)^{1/2} |u(y,1/k)|dy    \\
&\leq& C t^{n/2} \int_{\RR} {|u(y,1/k)|\over  1+ |y|^{2n} } dy\leq   C_kt^{n/2},
\end{eqnarray*}
which gives
\begin{eqnarray*}
IV \leq  C\int_0^{\infty} {1\over (1+t)^{(3n+1)/2}} \left(  \int_{\RR} {|e^{-t \L}(u(\cdot, 1/k))(x)|^2\over (1 + |x|^2)^{\frac {3n+1}{2}}}   dx\right) dt
\leq C_k\int_0^{\infty} {{t^{n}}\over (1+t)^{(3n+1)/2}} dt \leq   C'_k.
\end{eqnarray*}
If $t\leq  1,$ then $IV \leq C_k$ can be verified easier  by using condition $u\in C^1({\mathbb R}^{n+1}_+)$
and Lemma~\ref{le2.2}.
By Lemma~\ref{le3.1}, we have that $V\leq C_k$.  For term $VI$, we use  \eqref{e3.2} to obtain that $  | \partial_t u(x, t) |
  \leq   C /\sqrt t,
$ and then
 \begin{eqnarray*}
 | u(x, t+1/k)-u(x, 1/k) |  =  \Big| \int_{1/k}^{t+1/k}   \partial_s u(x, s) ds \ \Big| \leq C(\sqrt {t+1/k}-\sqrt {1/k}),
 \end{eqnarray*}
which gives
 \begin{eqnarray*}
VI
& \leq& C \int_{{\mathbb R}^{n+1}_+} { {t+1/k} \over 1+ (|x|^2+t)^{3(n+1)}}dxdt
\leq C_k.
\end{eqnarray*}
  Estimate \eqref{e2.9} then follows readily with $d=5n+6$.

By Lemma \ref{le2.7}, we have that $\overline w\equiv0$, and then $w=0$, that is,
 $$u(x,t+1/k)=e^{-t \L}(u(\cdot, 1/k))(x), \ \ \  x\in\RR, \ t>0.$$
The proof is complete.
\end{proof}

From now on, for any $k\in{\mathbb N}$, we set
$$
u_k(x,t)= u(x, t+{1/k} ).
$$
Following an argument as in \cite[Lemma 1.4]{FJN}, we have

\begin{lem}\label{le3.4}For every  $u\in {\rm TMO_\L}(\Real_+^{n+1})$,
 there exists a constant $C>0$ (depending only on $n$) such that for all  $k\in{\mathbb N},$
\begin{equation}\label{e3.5}
\sup_{x_B, r_B}   r_B^{-n}\int_0^{r_B^2}\int_{B(x_B, r_B)}  | t\partial_t u_k(x,t)|^2 {dx dt\over t}
  \leq C\|u\|^2_{{\rm TMO_\L}(\Real_+^{n+1})}<\infty.
\end{equation}
\end{lem}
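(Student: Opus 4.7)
The plan is to reduce the bound to the ${\rm TMO_\L}$ norm of $u$ via the substitution $s=t+1/k$, which turns the integral in \eqref{e3.5} into
\[
r_B^{-n}\int_{1/k}^{r_B^2+1/k}\int_{B(x_B,r_B)}|\partial_s u(x,s)|^2\,dx\,ds,
\]
and then to split into two regimes depending on whether $r_B^2\ge 1/k$ or $r_B^2<1/k$. If $r_B^2\ge 1/k$, the shift is harmless: $r_B^2+1/k\le 2r_B^2$, so the integral is dominated by the ${\rm TMO_\L}$ Carleson integral on the slightly enlarged ball $B(x_B,\sqrt{2}\,r_B)$, which immediately yields $C\|u\|^2_{{\rm TMO_\L}(\Real_+^{n+1})}$.

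The second regime $r_B^2<1/k$ is the main obstacle: the time variable is bounded away from zero on the scale $1/k$, and a naive enlargement of the ball loses a factor $(r_B\sqrt{k})^{-n}$ that blows up as $k\to\infty$. The idea is to exploit interior regularity. Since $V=V(x)$ is time-independent, a standard difference-quotient argument shows that $\partial_\tau u$ is itself a weak solution of $\mathbb{L} v=0$, so Lemma~\ref{le2.6} applies to it. I then apply the mean-value estimate on the parabolic cube of radius $r_0=\sqrt{s}$ centered at $(x,s+s/16)$, which lies inside $\Real_+^{n+1}$ because $s\ge 1/k>0$. This produces the pointwise bound
\[
|\partial_s u(x,s)|^2 \le C\,s^{-(n+2)/2} \int_{B(x,\sqrt{s}/2)\times(13s/16,\,17s/16]} |\partial_\tau u(y,\tau)|^2\,dy\,d\tau,
\]
whose prefactor is $O(k^{(n+2)/2})$ and whose region lies inside $B(x,1/\sqrt{2k})\times(0,C/k]$ for $s\in[1/k,2/k]$.

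Substituting this bound into the integral over $(x,s)\in B(x_B,r_B)\times[1/k,\,r_B^2+1/k]$ and exchanging the $x$- and $y$-integrations by Fubini, the $x$-integral contributes a factor $|B(x_B,r_B)|\sim r_B^n$ (using $r_B<1/\sqrt{k}$ to keep the $y$-support inside $B(x_B,2/\sqrt{k})$), while the remaining $(y,\tau)$-integral is bounded by the ${\rm TMO_\L}$ norm on the ball of radius $2/\sqrt{k}$, giving $Ck^{-n/2}\|u\|_{{\rm TMO_\L}(\Real_+^{n+1})}^2$. Collecting the factors $r_B^{-n}\cdot k^{(n+2)/2}\cdot (1/k)\cdot r_B^n\cdot k^{-n/2}$, all dependence on $k$ and $r_B$ cancels and the desired uniform bound follows. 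The crux is that the $k$-powers from the mean-value blow-up $s^{-(n+2)/2}$, the length $1/k$ of the $s$-integration, and the natural scaling $k^{-n/2}$ of the ${\rm TMO_\L}$ norm on the ball of radius $1/\sqrt{k}$ cancel exactly.
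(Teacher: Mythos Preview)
Your argument is correct and follows the same strategy as the paper: split according to whether $r_B^2\ge 1/k$ or $r_B^2<1/k$, handle the first case by the shift $s=t+1/k$ and ball enlargement, and in the second case apply Lemma~\ref{le2.6} to $\partial_t u$. The paper packages the second case more compactly by first recording the pointwise bound $|\partial_t u(x,t)|\le Ct^{-1/2}\|u\|_{{\rm TMO_\L}(\Real_+^{n+1})}$ (this is \eqref{e3.2}), so that the integral reduces at once to $\int_0^{r_B^2}(t+1/k)^{-1}\,dt\le kr_B^2<1$; your explicit Fubini counting is the same computation unpacked. (One cosmetic fix: with your center $(x,s+s/16)$ and $r_0=\sqrt{s}$, the point $(x,s)$ lies on the boundary of $B_{r_0/4}$ rather than in its interior; taking center $(x,s)$ with $r_0=\sqrt{s}/2$, as in \eqref{e3.2}, avoids this.)
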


 \begin{proof}
Let $B=B(x_B, r_B)$. If $r_B^2\geq 1/k$, then letting $s=t+1/k$, it follows that
 \begin{align*}
 |B|^{-1}\int_0^{r_B^2}\int_B  |  t \partial_t u (x,t+1/k)|^2 {dx dt \over t}
 &\leq  C |B|^{-1} \int_0^{(2r_B)^2}\int_{2B} s| \partial_s u (x,s)|^2 {dx ds }\\
 &\leq C \|u\|^2_{{\rm TMO_\L}(\Real_+^{n+1})}<\infty .
\end{align*}
If $r_B^2< 1/k$, then it follows from Lemma~\ref{le2.6} for   $\partial_{t}u(x, t+{1/k})$ and
 a similar argument as in \eqref{e3.2}   that
 \begin{equation*}
 \big| \partial_{t}u(x, t+{1/k})\big| \leq C \big(t+k^{-1}\big)^{-1/2} \|u\|_{{\rm TMO_\L}(\Real_+^{n+1})}.
  \end{equation*}
 Therefore,
  \begin{align*}
 |B|^{-1}\int_0^{r_B^2}\int_B | t\partial_t u (x,t+1/k)|^2 {dx dt\over t }
 &\leq C  |B|^{-1}\|u\|^2_{{\rm TMO_\L}(\Real_+^{n+1})}   \int_0^{r_B^2}\int_{B}  t\big(t+k^{-1}\big)^{-2} {dx dt}\\
  &\leq C  \|u\|^2_{{\rm TMO_\L}(\Real_+^{n+1})} \,  \Big(k^2 \int_0^{r_B^2}  t { dt}\Big)\\
 &\leq C \|u\|^2_{{\rm TMO_\L}(\Real_+^{n+1})}<\infty
\end{align*}
since
$r_B^2< 1/k.$

By taking the supremum over all balls $B\subset\Real^n,$ we complete the proof of \eqref{e3.5}.
\end{proof}

Letting $f_k(x)=u(x, 1/k), k\in{\mathbb N}$, it follows from Lemma \ref{le3.2}     that
$$
u_k(x,t)=e^{-t{\L}}(f_k)(x), \ \ \ x\in\RR, \ t>0.
$$
And it follows from
Lemma~\ref{le3.4}   that
$$
 \mu_{\nabla_t, f_k}(x,t)=| t\partial_te^{-t{\L}}(f_k)(x)|^2 {dx dt\over t}
$$
is a $2-$Carleson measure with $||| \mu_{\nabla_t, f_k}|||_{2car} \leq C\|u\|^2_{{\rm TMO_\L}(\Real_+^{n+1})}$.

\begin{lem} \label{le3.5} For every  $u\in {\rm TMO_\L}(\Real_+^{n+1})$,
there exists a constant $C>0$ independent of $k$ such that
\begin{equation*}
\|f_k\|_{\rm BMO_\L(\RR)}\leq C<\infty,\ \  \hbox{ for any } k\in \mathbb N.
\end{equation*}
Hence for  all $k\in{\mathbb N}$, $f_k$ is uniformly bounded in  ${\rm BMO_\L(\Real^{n})}$.
\end{lem}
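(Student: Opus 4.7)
The strategy is to establish both defining conditions \eqref{e1.5} and \eqref{e1.6} of $\mathrm{BMO}_\L(\RR)$ for $f_k = u(\cdot, 1/k)$ with constants independent of $k$. The key inputs are the uniform 2-Carleson bound from Lemma~\ref{le3.4}, the semigroup identity $u_k(x,t) = e^{-t\L}f_k(x)$ from Lemma~\ref{le3.2}, and the duality $(H^1_\L)^* = \mathrm{BMO}_\L$ from Lemma~\ref{le2.5}. A useful auxiliary fact is that $\partial_t u_k$ is also $\mathbb L$-caloric (since $\L$ and $\partial_t$ commute), so Lemma~\ref{le2.6} yields pointwise bounds on $\partial_t u_k$ from $L^2$-averages over parabolic balls.

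For the oscillation condition \eqref{e1.5}, I would proceed by duality, showing $|\langle f_k, a\rangle| \le C\|a\|_{H^1_\L}$ uniformly over $H^1_\L$-atoms $a$. Inserting the spectral Calder\'on reproducing identity $I = 4\int_0^\infty (t\L e^{-t\L})^2\, dt/t$ yields the bilinear pairing
\begin{align*}
\langle f_k, a\rangle = 4\int_0^\infty \int_{\RR} t\, \partial_t u_k(x,t)\, \partial_t\bigl[e^{-t\L}a\bigr](x)\, dxdt.
\end{align*}
By the parabolic tent-space Cauchy-Schwarz inequality, this is bounded by the 2-Carleson norm of $|\partial_t u_k|^2 dxdt$ (uniformly controlled via Lemma~\ref{le3.4}) times a parabolic area-integral norm of $t\partial_t(e^{-t\L}a)$, and the latter is bounded by $\|a\|_{H^1_\L}$ via the square-function characterization of $H^1_\L$ adapted to the heat semigroup.

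For the local $L^1$ condition \eqref{e1.6} on balls $B$ with $r_B \ge \rho(x_B)$, the sharpened kernel decay for $\sqrt t \ge \rho$ in Lemma~\ref{le2.2} and the slow variation of $\rho$ (Lemma~\ref{le2.1}) let us cover $B$ by controlled-many critical balls. On each such ball, Lemma~\ref{le2.6} at the critical scale yields a pointwise bound on $|u(x, \rho(x)^2)|$; comparing $u(x, 1/k)$ to $u(x, \rho(x)^2)$ via the time integral of $\partial_t u$, and using the pointwise bound $|\partial_t u(x,s)| \le C\|u\|_{\mathrm{TMO}_\L}/\sqrt{s}$ obtained by applying Lemma~\ref{le2.6} to the $\mathbb L$-caloric function $\partial_t u$, completes the estimate.

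The main obstacle is the tent-space duality step: the 2-Carleson measure on $|\partial_t u_k|^2 dxdt$ carries parabolic scaling (time-scale $r_B^2$), so one must use a parabolic tent space $T^{\infty,2}_2$ whose predual involves a parabolic area integral. Establishing the $H^1_\L$-equivalence of this parabolic area integral, along with the Calder\'on reproducing identity in the correct weak sense, requires the full gradient estimates on the heat kernel from Lemma~\ref{le3.8}, which is where the hypothesis $V \in B_q$ with $q \ge n$ is crucially used. A direct, non-duality argument runs into trouble because a naive Cauchy-Schwarz bound on $\int_0^{r_B^2}|\partial_t u_k|\,dt$ yields a spurious factor of $r_B$ that cannot be eliminated using the 2-Carleson bound alone; the duality circumvents this by exploiting the bilinear tensor structure of the pairing.
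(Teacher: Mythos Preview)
Your approach differs substantially from the paper's, and in one place your claim is directly refuted by the paper's argument. The paper states explicitly (in the paragraph following Lemma~\ref{le3.5}) that it will give a proof \emph{independent} of the duality $(H^1_\L)^* = \mathrm{BMO}_\L$ and of the tent-space Carleson inequality---precisely the two pieces of machinery you build your argument on. Instead, the paper invokes the equivalent characterization \eqref{e3.9} of $\mathrm{BMO}_\L$ via $\sup_B |B|^{-1}\int_B|f - e^{-r_B^2\L}f|^2\,dx$ and tests this by $L^2$-duality on the fixed ball $B$: for $g$ supported on $B$, Lemma~\ref{le3.7} rewrites $\int f_k\,(I - e^{-r_B^2\L})g\,dx$ as the upper-half-space integral $\tfrac14\int F\,G\,dxdt/t$, and Lemma~\ref{le3.6} bounds this directly by $C|B|^{1/2}\interleave\mu_{\nabla_t,f_k}\interleave_{2car}\|g\|_{L^{2n/(n+2)}(B)}$ via an annular decomposition and the kernel bound \eqref{eq1}. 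Your concluding remark that ``a direct, non-duality argument runs into trouble'' is therefore contradicted by the paper: the factor $(I - e^{-r_B^2\L})$ supplies exactly the cancellation that avoids the naive Cauchy--Schwarz loss you describe, without any appeal to $H^1_\L$ atoms or tent spaces.

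Your duality route is essentially the classical argument from \cite[Theorem~2]{DGMTZ} that the paper cites but chooses to bypass; it can be made to work, but two points in your outline need correction. First, the separate treatment of \eqref{e1.6} is both redundant and gapped: redundant because once $|\langle f_k,a\rangle|\le C$ holds for \emph{all} $H^1_\L$-atoms (including those supported on supercritical balls, which carry no cancellation), Lemma~\ref{le2.5} already yields the full $\mathrm{BMO}_\L$ bound; gapped because Lemma~\ref{le2.6} controls $|u(x,\rho(x)^2)|$ only by an $L^2$-average of $u$ itself, not by $\|u\|_{\mathrm{TMO}_\L}$, so the step ``Lemma~\ref{le2.6} at the critical scale yields a pointwise bound on $|u(x,\rho(x)^2)|$'' does not close without further input. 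Second, your attribution of the hypothesis $q\ge n$ to this lemma is mistaken: the proof of Lemma~\ref{le3.5} uses only the time-derivative kernel bounds of Lemma~\ref{le2.2} and \eqref{eq1}, valid for $q>n/2$. The spatial-gradient estimates of Lemma~\ref{le3.8}, and hence the assumption $q\ge n$, enter only in the proof of part~(1) of Theorem~\ref{th1.1}, not here.
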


 A similar result of Lemma~\ref{le3.5} was given in \cite[Theorem 2]{DGMTZ}; see also  \cite{DY2,   HLMMY, MSTZ}.
These arguments depend  on three  non-trivial results:    the duality theorem that
${\rm BMO}_{{\mathcal{L}}}(\RR)  = \big( H^1_{\L}(\RR)\big)^{\ast}$,
Carleson inequality on tent spaces (see \cite[Theorem 1]{CMS})
 and
some special properties of the space $H^1_{\L}(\RR)$.
In the sequel, we are going to give a direct proof of Lemma~\ref{le3.5} which is independent of these  results such as the duality of
 $ H^1_{\L}(\RR)$ and ${\rm BMO}_{{\mathcal{L}}}(\RR)$ mentioned above.

To prove  Lemma~\ref{le3.5}, we need to  establish the following Lemmas~\ref{le3.6} and \ref{le3.7}.

Given a function
$f\in L^2((1+|x|)^{-2n}dx)$ and an $L^{2}$ function  $g$  supported on a ball $B=B(x_B, r_B)$, for
 any $ (x,t)\in {\mathbb R}^{n+1}_+$, set
\begin{eqnarray}\label{e3.7}
 F(x,t)=  t\partial_t e^{-t \L}   f(x)\ \ \ {\rm and}\ \ \
G(x,t)= t\partial_t e^{-t \L}  (I-e^{-r^2_B \L} )g(x).
\end{eqnarray}


\begin{lem} \label{le3.6}
   Suppose $f, g, F, G$ are as in (\ref{e3.7}).
 If $f$   satisfies
$$
\interleave \mu_{\nabla_t, f}\interleave^2_{2car}=\sup_{x_B, r_B}
 r_B^{-n}\int_0^{r_B^2}\int_{B(x_B, r_B)} | t \partial_t  e^{-t \L}   f(x)|^2 {dxdt\over t }
 <\infty,
$$
  then there exists a constant $C>0$  such that
\begin{eqnarray}
\int_{{\mathbb R}^{n+1}_+}
|F(x,t) G(x,t)|{dxdt\over t}\leq C |B|^{{1\over 2}}\interleave \mu_{\nabla_t, f}\interleave_{2car}
\|g\|_{{ L}^{2}(B)}.
\label{e3.8}
\end{eqnarray}
\end{lem}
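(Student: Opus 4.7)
My plan is to decompose $\Real^{n+1}_+$ into parabolic dyadic tents centred on $B$ and apply Cauchy-Schwarz on each piece, pairing the Carleson-type hypothesis on $F$ with an $L^2$-type square function bound on $G$. Write $\Real^{n+1}_+=T_0\cup\bigcup_{j\ge 1}T_j$, with $T_0=\widehat{2B}$ and $T_j=\widehat{2^{j+1}B}\setminus\widehat{2^j B}$ for $j\ge 1$, and split the measure as $dxdt/t=(dxdt/t^2)^{1/2}(dxdt)^{1/2}$. Cauchy-Schwarz on each $T_j$ yields
$$\int_{T_j}|FG|\,\frac{dxdt}{t}\le\Big(\int_{T_j}|F|^2\,\frac{dxdt}{t^2}\Big)^{1/2}\Big(\int_{T_j}|G|^2\,dxdt\Big)^{1/2}.$$

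The $F$-factor is handled uniformly in $j$ by enlarging $T_j\subset\widehat{2^{j+1}B}$ and invoking the hypothesis on $\mu_{\nabla_t,f}$:
$$\int_{T_j}|F|^2\frac{dxdt}{t^2}\le\int_0^{(2^{j+1}r_B)^2}\!\int_{2^{j+1}B}|\partial_t e^{-t\L}f|^2\,dxdt\le C\,2^{jn}|B|\,\interleave\mu_{\nabla_t,f}\interleave_{2car}^2.$$
For the $G$-factor on the innermost piece $T_0$, the spectral theorem for the self-adjoint operator $\L$, together with the elementary identity $\int_0^\infty t^2\lambda^2 e^{-2t\lambda}\,dt=1/(4\lambda)$, produces
$$\int_{\Real^{n+1}_+}|G|^2\,dxdt=\tfrac14\|\L^{-1/2}(I-e^{-r_B^2\L})g\|_{L^2}^2\le C\|\L^{-1/2}g\|_{L^2}^2,$$
and the Hardy-Littlewood-Sobolev inequality (transferred from $(-\Delta)^{-1/2}$ to $\L^{-1/2}$ via the pointwise domination $\mathcal K_t(x,y)\le h_t(x-y)$) furnishes $\|\L^{-1/2}g\|_{L^2}\le C\|g\|_{L^{2n/(n+2)}(B)}$, yielding the desired bound on $T_0$.

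For $j\ge 1$ the crude global $L^2$ bound is insufficient, since $\sum_j 2^{jn/2}$ diverges. I would extract $j$-decay from the cancellation in $I-e^{-r_B^2\L}$ by writing
$$G(x,t)=-t\int_0^{r_B^2}\!\int_B \partial_s^2 \mathcal K_{t+s}(x,y)\,g(y)\,dy\,ds$$
and invoking the kernel derivative bound \eqref{eq1} of Lemma \ref{le2.2} with large $N$. Every $(x,t)\in T_j$ is parabolically far from $B\times\{0\}$: either $|x-x_B|\ge 2^j r_B$ or $t\ge 4^j r_B^2$. Playing the Gaussian factor $\exp(-c\,4^j r_B^2/(t+s))$ against the size $(t+s)^{-n/2-2}$, and using H\"older on $B$ to convert $\|g\|_{L^1(B)}$ to $|B|^{(n-2)/(2n)}\|g\|_{L^{2n/(n+2)}(B)}$, yields $\int_{T_j}|G|^2\,dxdt\le C\,2^{-(n+2)j}\|g\|_{L^{2n/(n+2)}(B)}^2$. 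Cauchy-Schwarz then leaves a summable $\sum_j 2^{-j}$, and summing produces \eqref{e3.8}.

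The main obstacle is precisely the $G$-factor estimate on the outer tents $T_j$, $j\ge 1$: the spectral identity used for $T_0$ is blind to spatial/temporal localisation, so one must carefully couple the Gaussian off-diagonal decay of $\partial_t^m \mathcal K_t$ with the additional scale $r_B^2$ furnished by $I-e^{-r_B^2\L}$, treating separately the regimes $t\lesssim 4^j r_B^2$ (where the Gaussian factor wins) and $t\gtrsim 4^j r_B^2$ (where the kernel size $(t+s)^{-n/2-2}$ wins) so as to beat the $2^{jn}$ growth coming from the $F$-factor.
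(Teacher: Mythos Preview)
Your proposal is correct and follows essentially the same route as the paper: the dyadic tent decomposition, the Cauchy--Schwarz split $|FG|/t=|\partial_t e^{-t\L}f|\cdot|G|$, the spectral bound $\|\mathcal S(I-e^{-r_B^2\L})g\|_{L^2}\le C\|\L^{-1/2}g\|_{L^2}\le C\|g\|_{L^{2n/(n+2)}}$ on the inner tent, and on the outer tents the expansion $(I-e^{-r_B^2\L})g=\int_0^{r_B^2}\L e^{-s\L}g\,ds$ combined with the pointwise bound \eqref{eq1} on $\partial_t^2\mathcal K_t$. Your explicit separation into spatial and temporal regimes on $T_j$ is in fact a bit more careful than the paper's exposition, which simply asserts $|x-y|\ge 2^k r_B$ for all $(x,t)\in T(2^kB)\setminus T(2^{k-1}B)$ (this holds only when $x\notin 2^{k-1}B$; when instead $t>(2^{k-1}r_B)^2$ the required bound comes from the size $(t+s)^{-n/2-2}$, exactly as you indicate).
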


 \begin{proof}
 Given a ball $B=B(x_B, r_B)\subset{\mathbb R}^n$ with radius $r_B$, we  put
$$
T(B)=\{(x,t)\in{\mathbb R}^{n+1}_+: x\in B, \ 0<t<r_B^2\}.
$$
We then write
\begin{align*}
\int_{{\mathbb R}^{n+1}_+}
&\big| F(x,t) G(x,t)\big|{dxdt\over t} \\&=\int_{T(2B)} \big|F(x,t)
G(x,t)\big|{dxdt\over t}+\sum_{k=2}^{\infty}\int_{T(2^{k}B)\backslash T(2^{k-1}B) }
\big|F(x,t) G(x,t)\big|{dxdt\over t}\\
&={\rm A_1} + \sum_{k=2}^{\infty} {\rm A_k}.
\end{align*}
Using the H\"older inequality, the $L^2$-boundedness of square function $\mathcal S$ (see \eqref{eqsquares}) and the $L^2$-boundedness of the operator $e^{-r_B^2\L}$,   we obtain
\begin{align*}
 {\rm A_1}
&\leq \Big\|\left\{\int_0^{(2r_B)^2} |  t\partial_t e^{-t \L}   f(x)  |^2{dt\over t}\right\}^{1/2}\Big\|_{L^2(2B)}
\norm{{\mathcal S} ({
{I}}- e^{-r^2_B \L} )g}_{{ L}^2(\RR)}\\
&\leq C r_B^{{n \over 2}}\interleave \mu_{\nabla_t, f}\interleave_{2car}
\|({
{I}}- e^{-r_B^2 \L} )g\|_{{ L}^2(\RR)}\\
&\le C r_B^{{n \over 2}}\interleave \mu_{\nabla_t, f}\interleave_{2car}\| g\|_{{ L}^{2}(B)}.
\end{align*}
Let us estimate ${\rm A_k}$ for $k=2,3, \cdots.$
Observe that
\begin{align*}
{\rm A}_k
&\leq \Big\|\left\{\int_0^{(2^kr_B)^2}\big|t\partial_t e^{-t \L} f(x)\big|^2{dt\over t}\right\}^{1/2}\Big\|_{ L^2(2^kB)}\\
&\quad \quad \times \Big\|
\left\{\int_0^{(2^kr_B)^2}\big|t\partial_t e^{-t \L}  (I-e^{-r^2_B \L} )g(x)\chi_{T(2^{k}B)\backslash T(2^{k-1}B) }(x,t)\big|^2
{dt\over t}\right\}^{1/2}\Big\|_{{ L}^{2}(2^kB)}\\
&\leq C (2^kr_B)^{n \over2} \interleave \mu_{\nabla_t, f}\interleave_{2car}\times  {\rm B}_k,
\end{align*}
where
\begin{eqnarray*}
{\rm B}_k =\Big{\|}
\Big\{\int_0^{(2^kr_B)^2}\big|t\partial_t e^{-t \L}  (I-e^{-r^2_B \L} )
g(x)\chi_{T(2^{k}B)\backslash T(2^{k-1}B) }(x,t)\big|^2
{dt\over t}\Big\}^{1/2}\Big{\|}_{{ L}^{2}(2^kB)}.
\end{eqnarray*}
To estimate ${\rm B}_k,$ we set
$$
\Psi_{t,s} (\L)h(y)=(t+s)^2\Big({d^2{e^{-r \L}}\over dr^2}
\Big|_{r=t+s}  h\Big)(y).
$$
Note that
$$
 ({ {I}}-e^{-r^2_B \L} )g=\int_0^{r^2_B} \L e^{-s \L}g{{ds}}.
$$ By \eqref{eq1}, we have
\begin{align*}
 {\rm B}_k
& \leq C\Big{\|}\left\{\int_0^{(2^kr_B)^2 }\Big| t\int_0^{r^2_B}
{1\over (t+s)^2} \Psi_{t,s} (\L)g(x)\chi_{T(2^{k}B)\backslash T(2^{k-1}B) }(x,t){ds} \Big|^2  {dt \over t} \right\}^{1/2}\Big{\|}_{{ L}^{2}(2^kB)}\\
&\leq C\Big{\|}\Big\{\int_0^{(2^kr_B)^2} \Big| t\int_0^{r^2_B} \int_{B(x_B,r_B)}
{1\over (t+s)^{{n\over 2}+2}}e^{-c\frac{\abs{x-y}^2} {t+s}} \\&\quad \quad \quad \quad \quad \quad \quad \quad
\quad \quad \quad \quad \quad \quad \quad  \times |g(y)|\chi_{T(2^{k}B)\backslash T(2^{k-1}B) }(x,t)
{dyds} \Big|^2  {dt\over t} \Big\}^{1/2}\Big{\|}_{{ L}^{2}(2^kB)}.
\end{align*}
Note that for   $(x,t)\in T(2^{k}B)\backslash T(2^{k-1}B)$
and $y\in B$, we have that $|x-y|\geq 2^kr_B$.   So
\begin{align*}
{\rm B}_k &\le C\Big{\|}\Big\{\int_0^{(2^kr_B)^2} \Big| t\int_0^{r^2_B} \int_{B(x_B,r_B)}
{1\over \abs{x-y}^{n+4}}
 |g(y)|\chi_{T(2^{k}B)\backslash T(2^{k-1}B) }(x,t)
{dyds} \Big|^2  {dt\over t } \Big\}^{1/2}\Big{\|}_{{ L}^{2}(2^kB)}\\
&\le C\norm{g}_{L^1(B)}\Big{\|}\Big\{\int_0^{(2^kr_B)^2} \Big| t\int_0^{r^2_B}
{1\over (2^k r_B)^{n+4}}
 \chi_{T(2^{k}B)\backslash T(2^{k-1}B) }(x,t)
{ds} \Big|^2  {dt\over t } \Big\}^{1/2}\Big{\|}_{{ L}^{2}(2^kB)}\\
&\le C\norm{g}_{L^1(B)}\frac{r^2_B}{(2^k r_B)^{n+4}}\Big{\|}\int_0^{(2^kr_B)^2}
 \chi_{T(2^{k}B)\backslash T(2^{k-1}B) }(x,t)
\ \  tdt \Big{\|}^{1/2}_{{ L}^{1}(2^kB)}\\
&\leq  C { 2^{(-2-{n\over 2})k}r_B^{-\frac{n}{2}}\norm{g}_{L^1(B)}\le C2^{(-\frac{n}{2}-2)k}\norm{g}_{L^{2}(B)}.}
\end{align*}
Consequently,
\begin{equation*}
{\rm A}_k
\leq C 2^{-2k}  r_B^{{n\over 2}}   \interleave \mu_{\nabla_t, f}\interleave_{2car}\norm{g}_{L^{2}(B)},
\end{equation*}
which implies
\begin{align*}
\int_{{\mathbb R}^{n+1}_+}
|F(x,t) G(x,t)|{dxdt\over t}
&\leq Cr_B^{{n  \over 2}}\interleave \mu_{\nabla_t, f}\interleave_{2car}\|g\|_{{ L}^{2}(B)}
 +
C\sum_{k=2}^{\infty} 2^{-2k}  r_B^{{n  \over 2}}
\interleave\mu_{\nabla_t, f}\interleave_{2car}\|g\|_{{L}^{2}(B)} \\
&\leq  Cr_B^{{n  \over 2}}
\interleave \mu_{\nabla_t, f}\interleave_{2car}\|g\|_{{L}^{2}(B)}
\end{align*}
as desired.
\end{proof}

\begin{lem} \label{le3.7}   Suppose $B, f, g, F, G $ are
defined as in Lemma~\ref{le3.6}. If  $\interleave \mu_{\nabla_t, f}\interleave_{2car}<\infty$, then
we have the equality:
\begin{eqnarray*}
\int_{{\mathbb R}^n} f(x) ({\mathcal {I}}-e^{-r^2_B{\L}})g(x)dx={1\over 4}\int_{{\mathbb R}^{n+1}_+}
F(x,t) G(x,t){dxdt\over t}.
\end{eqnarray*}
\end{lem}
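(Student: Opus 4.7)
The identity is a Calder\'on-type reproducing formula, linking the two sides through the spectral calculus of the nonnegative self-adjoint operator $\L$. My plan is to derive it formally from the spectral theorem and then promote the formal identity to a rigorous one by using Lemma~\ref{le3.6} to justify Fubini and a truncation-and-limit argument to handle the fact that $f$ is not in $L^{2}(\RR)$.

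The key input is the scalar identity $\int_{0}^{\infty}(t\lambda e^{-t\lambda})^{2}\,dt/t=1/4$ for $\lambda>0$, obtained by the substitution $u=2t\lambda$. By the spectral theorem this upgrades to $\int_{0}^{\infty}(t\L e^{-t\L})^{2}h\,dt/t=\tfrac{1}{4}h$ for every $h$ in the closure of the range of $\L$. Since the spectral multiplier $1-e^{-r_{B}^{2}\lambda}$ vanishes at $\lambda=0$, the vector $h:=(I-e^{-r_{B}^{2}\L})g$ lies in $\overline{\mathrm{Range}(\L)}$ and the identity applies. Pairing with $f$ and using the self-adjointness of $t\L e^{-t\L}$ to redistribute one factor yields
\[
\tfrac{1}{4}\int_{\RR}f(x)(I-e^{-r_{B}^{2}\L})g(x)\,dx=\int_{0}^{\infty}\!\int_{\RR}(t\L e^{-t\L}f)(x)\,(t\L e^{-t\L}h)(x)\,\frac{dx\,dt}{t},
\]
and since $t\partial_{t}e^{-t\L}=-t\L e^{-t\L}$, the right-hand integrand equals $F(x,t)G(x,t)$, delivering the claimed equality.

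The main obstacle is that $f$ only belongs to the weighted space $L^{2}((1+|x|)^{-2n}dx)$ of Lemma~\ref{le3.1}, not to $L^{2}(\RR)$, so one cannot directly apply the spectral theorem after pairing. My plan is to truncate in time: for $0<\varepsilon<N<\infty$, a direct spectral calculation gives the bounded operator identity
\[
4\int_{\varepsilon}^{N}(t\L e^{-t\L})^{2}\,\frac{dt}{t}=(2\varepsilon\L+I)e^{-2\varepsilon\L}-(2N\L+I)e^{-2N\L}.
\]
On the strip $\varepsilon\le t\le N$, Fubini is legitimate because Lemma~\ref{le3.6} furnishes the absolute bound $\iint|FG|\,dx\,dt/t\le C|B|^{1/2}\interleave\mu_{\nabla_{t},f}\interleave_{2car}\|g\|_{L^{2n/(n+2)}(B)}<\infty$, independent of $\varepsilon$ and $N$. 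The limits $\varepsilon\to 0^{+}$ and $N\to\infty$ are then handled by dominated convergence: on the time-integral side the Carleson bound dominates the integrand, while on the $f$-pairing side the kernel estimates of Lemma~\ref{le2.2} combined with the weighted integrability of $f$ from Lemma~\ref{le3.1} allow $(2\varepsilon\L+I)e^{-2\varepsilon\L}g-(2N\L+I)e^{-2N\L}g$ to converge to $(I-e^{-r_{B}^{2}\L})g$ when tested against $f$, producing the identity in the limit.
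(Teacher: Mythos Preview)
Your approach is correct and mirrors the paper's: truncate in $t$, use Lemma~\ref{le3.6} for absolute integrability, apply Fubini plus the self-adjointness of $t\partial_t e^{-t\L}$ to shift one factor onto $h=(I-e^{-r_B^2\L})g$, then pass to the limit (the paper cites \cite{DGMTZ} and \cite{MSTZ} for the last two steps, whereas you make the spectral computation explicit via the closed form $4\int_\varepsilon^N(t\L e^{-t\L})^2\,dt/t=(2\varepsilon\L+I)e^{-2\varepsilon\L}-(2N\L+I)e^{-2N\L}$). One slip: in your last sentence the truncated operator should act on $h$, not on $g$ (so the limit $h$ is correct for the right reason), and the weighted integrability of $f$ is a hypothesis of Lemma~\ref{le3.6} rather than a consequence of Lemma~\ref{le3.1}.
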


\begin{proof}
The technique of this lemma's proof has been used in lots of papers, for example \cite{DXY, DY2, DGMTZ, MSTZ}, but it is notable to state it at here for completeness.

By Lemma \ref{le3.6}, we know that $\displaystyle\int_{\Real_+^{n+1}}\abs{F(x,t)G(x,t)}{dxdt\over t}<\infty$. By dominated convergence theorem, the following integral converges absolutely and satisfies
\begin{equation*}
I=\int_{{\mathbb R}^{n+1}_+}F(x,t) G(x,t){dxdt\over t}=\lim_{\epsilon\rightarrow 0^+}\int_\epsilon^{1\over \epsilon}\int_{\RR}F(x,t)G(x,t){dxdt\over t}.
\end{equation*}
By Fubini's theorem, together with the commutative property of the semigroup $\{e^{-t\L}\}_{t>0}$, we have
\begin{eqnarray*}
\int_{\RR}F(x,t)G(x,t)dx&=&\int_{\RR}\int_{\RR}t\partial_t\K_t(x,y)f(y)t\partial_t\K_t(x,y)(\mathcal I-e^{-r^2_B\L})g(y)dydx\\
&=&\int_{\RR} f(y)\Big(t\partial_te^{-t\L}\Big)^2(\mathcal I-e^{-r^2_B\L})g(y)dy.
\end{eqnarray*}
 Hence,
 \begin{eqnarray*}
I&=&\lim_{\epsilon\rightarrow 0^+}\int_\epsilon^{1\over \epsilon}\int_{\RR} f(x)(t\partial_te^{-t\L})^2(\mathcal I-e^{-r^2_B\L})g(x){dx dt\over t}\\
&=&\lim_{\epsilon\rightarrow 0^+} \int_{\RR}f(x)\int_\epsilon^{1\over \epsilon}(t\partial_te^{-t\L})^2(\mathcal I-e^{-r^2_B\L})g(x){dtdx\over t}.
\end{eqnarray*}
By \cite[Lemma 7]{DGMTZ}, we can pass the limit inside the integral above. And, by a similar computation of \cite[Lemma 3.7]{MSTZ} with $\beta=1$ , we have
 \begin{eqnarray*}
I&=& \int_{\RR}f(x)\int_0^{\infty} (t\partial_te^{-t\L})^2(\mathcal I-e^{-r^2_B\L})g(x){dtdx\over t}=4\int_{\RR}f(x)(\mathcal I-e^{-r^2_B\L})g(x)dx.
\end{eqnarray*}
This completes the proof.
\end{proof}

\begin{proof}[Proof of Lemma~\ref{le3.5}]
First, we note an equivalent  characterization of $ {\rm BMO}_{\L}(\RR)$ that
   $f\in {\rm BMO}_{\L}(\RR) $ if and only if  $f\in L^2((1+|x|)^{-(n+\epsilon)}dx)$ and
  \begin{equation} \label{e3.9}
  \sup_B \Big( |B|^{-1}\int_{B}|f(x)- e^{-r^2_B{\L}}f(x) |^2dx\Big)^{1/2}\leq C<\infty.
  \end{equation}
This has been proved in \cite[Proposition 6.11]{DY2}
  (see  also \cite{DY1, HLMMY}).

 Now if $\|u\|_{{\rm TMO_\L}(\Real_+^{n+1})}<\infty$, then it follows from Lemma \ref{le3.1} that
 $$
 \int_{\Real^{n}} {|f_k(x)|^2\over 1+|x|^{2n}} dx\leq C_k <\infty.
 $$
 Given an $L^{2}$ function $g$ supported on a ball $B=B(x_B, r_B)$, it follows by  Lemma~\ref{le3.7}  that
 we have
\begin{align*}
  \int_{{\mathbb R}^n}
f_k(x) (I-e^{-r^2_B \L} )g(x)dx
 = {1\over 4}\int_{{\mathbb R}^{n+1}_+}  t \partial_t e^{-t \L}   f_k(x)  \
 t\partial_t e^{-t \L}  (\mathcal I-e^{-r^2_B \L} )g(x) {dxdt\over t}.
\end{align*}
By Lemmas~\ref{le3.6} and ~\ref{le3.4},
\begin{align*}
  \abs{\int_{{\mathbb R}^n}
f_k(x) (\mathcal I-e^{-r^2_B\L} )g(x)dx}  &\leq C|B|^{1/2}
 \interleave \mu_{\nabla_t,  f_k}\interleave_{2car} \|g\|_{{L}^{2}(B)}\\
 &\leq C|B|^{1/2}\|u\|_{{\rm TMO_\L}(\Real_+^{n+1})} \|g\|_{{L}^{2}(B)}.
\end{align*}
  Then the duality argument for $L^2$ shows that
\begin{align*}
\Big(|B|^{-1}\int_B |f_k(x)-e^{-r^2_B \L}f_k(x)|^2dx\Big)^{1/2}
&=|B|^{-1/2}\sup\limits_{\|g\|_{{ L}^{2}(B)}\leq 1}\Big|\int_{{\mathbb R}^n}
(\mathcal I-e^{-r^2_B \L})f_k(x)g(x)dx\Big|\nonumber\\
&=|B|^{-1/2}\sup\limits_{\|g\|_{{ L}^{2}(B)}\leq 1}\Big|\int_{{\mathbb R}^n}
f_k(x)\big(\mathcal I-e^{-r^2_B \L}\big)g(x)dx\Big|\nonumber\\
&\leq C
\|u\|_{{\rm TMO_\L}(\Real_+^{n+1})}
\end{align*}
for some $C>0$   independent of $k.$

 It then follows
 that for all $k\in{\mathbb N}$, $f_k$ is uniformly bounded in
${\rm BMO}_{\rm \L}({\mathbb R}^n)$.
\end{proof}


 \begin{proof}[Proof of part (2) of Theorem~\ref{th1.1}]  Letting $f_k(x)=u(x, 1/k)$,
 it follows by Lemma~\ref{le3.2} that $u(x, t+{1/k} )=    e^{-t{\L}}(f_k)(x)$ and so
 \begin{align}\label{e3.10}
    \L u(x, t+{1/k})=  \L e^{-t{\L}}(f_k)(x).
\end{align}
Then we have the following facts:

 \begin{itemize}

\item[(i)]    $H^1_\L(\Real^{n})$ is a Banach space;

\item[(ii)]
 For each $t>0$ and $x\in\RR$, $\partial^2_t {\mathcal K}_t(x, \cdot)\in H^1_{\L}(\RR)$ with $\|\partial^2_t {\mathcal K}_t(x, \cdot)\|_{H^1_{\L}(\RR)}
 \leq C/t^2$ (see \eqref{e2.8});

  \smallskip

\item[(iii)]
 The duality theorem that
 $ (H^1_\L(\Real^{n}) )^{\ast}= {\rm BMO_\L}(\Real^{n})$(see Lemma~\ref{le2.5}).

 \end{itemize}

 \noindent
  From (i), we use Lemma~\ref{le3.5} and pass  to a subsequence, we have that $f_k\rightarrow f$
  (in weak-${\ast}$ convergence) for some $f\in {\rm BMO}_\L(\Real^{n})$
 such that $\|f\|_{{\rm BMO}_\L(\Real^{n})}\leq C\|u\|_{{\rm TMO_\L}(\Real_+^{n+1})}$. Then by (ii) and (iii),
 we conclude that, for each $(x,t)\in {\mathbb R}_+^{n+1}$, the right-hand side
 of \eqref{e3.10} converges to $\L e^{-t{\L}}(f)(x)$ when $k\to \infty$. On the other hand,
  as $k\rightarrow \infty$, the left-hand side of \eqref{e3.10} converges pointwisely
 to $ \L u(x, t)$.   Hence, for a fixed $t>0,$
 $$\L w(x, t) =0\ \ {\rm in}\ \
 {\mathbb R}^{n},
 $$
 where $w(x,t)=u(x, t)- e^{-t{\L}}(f)(x)$.
  The function  $w$ satisfies ${\mathbb L}w =0, w(x,0)=0$.
Define,
 \begin{align*}
{\overline w}(x,t)=\left\{
 \begin{array}{rrl}
  w(x,t),&\ \ \ &t\ge 0,\\[6pt]
  w(x,-t),& \ \ &t<0.
 \end{array}
 \right.
 \end{align*}
Then ${\overline w}$ satisfies
 $$
{\overline {\mathbb L}} {\overline w}(x,t)=0, \ \ \ \ \ \ (x,t)\in  {\mathbb R}^{n+1},
 $$
where  ${\overline {\mathbb L}}$ is an extension operator of ${ {\mathbb L}}$ on $\Real^{n+1}$.
Observe that if $V(x)\in B_q(\Real^{n})$ with $q\ge n,$ then it can be verified
that  $V(x,t)=V(x)\in B_q$ on $\Real^{n+1}$.
Next, let us verify \eqref{e2.9}.
 One writes
\begin{eqnarray*}
\int_{{\mathbb R}^{n+1}} {|{\overline w}(x,t)|^2\over 1+ |(x,t)|^{2(n+4)}}dxdt
& \leq& C\Big(\int_{{\mathbb R}^{n+1}_+} {|u(x, 1/k)|^2\over 1+ (|x|^2+t)^{(n+4)}}dxdt \\
&& +\int_{{\mathbb R}^{n+1}_+} {|u(x, t)-u(x, 1/k)|^2\over 1+ (|x|^2+t)^{(n+4)}}dxdt+ \int_{{\mathbb R}^{n+1}_+} {|e^{-t{\L}}(f)(x)|^2\over 1+ (|x|^2+t)^{(n+4)}}dxdt\Big)\\
& \leq & 2( I+II+III).
\end{eqnarray*}
By Lemma~\ref{le3.1}, we have that $\displaystyle I\leq C_k\int_{\mathbb R^+}\frac{1}{1+t^4}dt<\infty$.   For term $II$, we use  \eqref{e3.2} to obtain that $  | \partial_t u(x, t) |
  \leq   C /\sqrt t,
$ and then
 \begin{eqnarray*}
 | u(x, t)-u(x, 1/k) |  =  \Big| \int_{1/k}^{t}   \partial_s u(x, s) ds \ \Big| \leq C t^{1/2},
 \end{eqnarray*}
which gives
 \begin{eqnarray*}
II
& \leq& C \int_{\mathbb R^+}\int_{\RR}\frac{t}{1+t^4} {1\over 1+ |x|^{2n} }dxdt
\leq C.
\end{eqnarray*}

Consider the term $III$. For a fixed $t>0$ and $x\in R^n$, we set $f_B=t^{-n}\int_{B(x,t)} f(y)dy.$
It can be verified  by  a standard argument (see \cite[Theorem 5]{DGMTZ}) that
$|e^{-t{\L}}f(x)|
 \leq C\|f\|_{{\rm BMO}_\L(\Real^{n})} +|f_B|.$    By Lemma 2 of  \cite{DGMTZ}, we have that
 $|f_B|\leq C(1+{\rm log}[{\rho(x)/t}])\|f\|_{{\rm BMO}_\L(\Real^{n})}$ if $t<\rho(x)$;
  $|f_B|\leq C \|f\|_{{\rm BMO}_\L(\Real^{n})}$ if $t\geq \rho(x)$. It then follows by Lemma~\ref{le2.1}  that
  there is a constant $k_0\geq 1$ such that
 \begin{eqnarray*}
|e^{-t{\L}}f(x)| \leq  C\|f\|_{{\rm BMO}_\L(\Real^{n})}
\left(1+  {\rm log}\left[1 + {C\rho(0)\over t}\left(1+{|x|\over \rho(0)}\right)^{k_0\over k_0+1}\right] \right)
\end{eqnarray*}
 for every $t>0$ and   $x\in\RR,$
which yields
 $$
 III\leq  C\|f\|^2_{{\rm BMO}_\L(\Real^{n})} \int_{\mathbb R^+}\frac{1}{1+t^4}\int_{\RR} {1\over 1+ |x|^{2 n}}
 \left(1+  {\rm log}\left[ 1 + {C\rho(0)\over t}\left(1+{|x|\over \rho(0)}\right)^{k_0\over k_0+1}\right] \right)^2dxdt  \le C<\infty.
 $$
  Estimate \eqref{e2.9} then follows readily with $d=n+8$.

By Lemma~\ref{le2.7}, we have that
 $u(x,t)=e^{-t{\L}}(f)(x)$ with   $f\in {\rm BMO}_\L(\Real^{n}).$  The proof of part (2) of Theorem~\ref{th1.1}
 is complete.
\end{proof}

\medskip

\section{The spaces ${\rm TMO}^\alpha_{\L}({\mathbb R}^{n+1}_+)$ and their characterizations}
\setcounter{equation}{0}

In this section we will extend the method   for the space ${\rm BMO}_{\L}(\RR)$ in Section 3
 to obtain some generalizations to  Lipschitz-type spaces $\Lambda^{\alpha}_{\L}(\RR)$ with
  $0<\alpha<1$ (see \cite{BHS2008}).

 Let us recall that a locally integrable function $f$ in
 $\Lambda^{\alpha}_{\L}(\RR), 0<\alpha<1,$ if  there exists
 a constant $C>0$ such that
 \begin{eqnarray}\label{e4.1}
 |f(x)-f(y)|\leq C |x-y|^{\alpha}
\end{eqnarray}
 and
 \begin{eqnarray}\label{e4.2}
 |f(x)|\leq C \rho(x)^{\alpha}
\end{eqnarray}
 for all $x,y\in\RR.$  The norm of $f$ in  $\Lambda^{\alpha}_{\L}(\RR)$ is given by
 $$
 \|f\|_{\Lambda^{\alpha}_{\L}(\RR)}=\sup_{\substack{ x,y\in\RR\\
 x\not=y}}{|f(x)-f(y)|\over |x-y|^{\alpha}} +\sup_{x\in\RR} |\rho(x)^{-\alpha}f(x)|.
 $$
 Because of \eqref{e4.2}, this space $\Lambda^{\alpha}_{\L}(\RR)$  is in fact a proper subspace of  the classical Lipschitz space
 $\Lambda^{\alpha}(\RR)$ (see \cite{BHS2008, FJN, MSTZ, YSY}).
   Following \cite{BHS2008},  a locally integrable function $f$ in $\Real^n$ is in ${\rm BMO}_{\L}^{\alpha}(\RR)$, $\alpha> 0,$
  if there is a constant $C>0$ such that
\begin{eqnarray}\label{e4.3}
\int_{B}|f(y)-f_B|dy\leq Cr^{n+\alpha}
\end{eqnarray}
for every ball $B=B(x, r)$, and
\begin{eqnarray}\label{e4.4}
\int_{B} |f(y)|~dy\leq Cr^{n+\alpha}
	\end{eqnarray}
 for every  ball  $B=B(x, r)$  with $r\geq \rho(x)$. Define
 \begin{eqnarray*}
 \|f\|_{{\rm BMO}_{\L}^{\alpha}(\RR)} =\inf\Big\{ C: C \ {\rm satisfies }\  \eqref{e4.3} \ {\rm and}\  \eqref{e4.4}  \Big\}.
\end{eqnarray*}
It is proved in \cite[Proposition 4]{BHS2008}  that for $0<\alpha<1$,
$ {\rm BMO}_{\L}^{\alpha}(\RR)=\Lambda^{\alpha}_{\L}(\RR).$ And, we should note that, when $\alpha=0,$ ${\rm BMO}^\alpha_\L(\RR)={\rm BMO}_\L(\RR)=\Lambda^\alpha_\L(\RR).$

\begin{thm}\label{th4.1}
Suppose $V\in B_q$ for some $q\ge n,$  $\alpha\in (0, 1)$. We denote by ${\rm TMO^{\alpha}_\L}(\Real_+^{n+1})$  the class of all $C^1(\Real_+^{n+1})$-functions $u(x,t)$
of the solution of  ${\mathbb L}u=u_{t}+\L u=0$ such that
\begin{equation}\label{e4.5}
\|u\|^2_{{\rm TMO^{\alpha}_\L}(\Real_+^{n+1})}= \sup_{x_B, r_B}     r_B^{-(n+2\alpha)} \int_0^{r_B^2}\int_{B(x_B, r_B)}
\left\{t| \partial_t u(x,t) |^2+| \nabla_x u(x,t) |^2\right\}
{dx dt }  <\infty.
\end{equation}
Then we have
 \begin{itemize}
\item[(1)]  if $u\in {\rm TMO^{\alpha}_\L}(\Real_+^{n+1})$, then    there exist some $f\in \Lambda^{\alpha}_{\L}(\RR) $
and a constant $C>0$ such that $u(x,t)=e^{-t \L}f(x)$,
and
 $
\|f\|_{\Lambda^{\alpha}_{\L}(\RR) }\leq C\|u\|_{{\rm TMO^{\alpha}_\L}(\Real_+^{n+1})}.
 $

\item[(2)]  if $f\in \Lambda^{\alpha}_{\L}(\RR) $, then  $u(x,t)=e^{-t \L}f(x)\in {\rm TMO^{\alpha}_\L}(\Real_+^{n+1})$ with
 $
 \|u\|_{{\rm TMO^{\alpha}_\L}(\Real_+^{n+1})}\le C\|f\|_{\Lambda^{\alpha}_{\L}(\RR) }.
 $
 \end{itemize}
\end{thm}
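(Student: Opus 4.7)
The plan is to adapt the proof of Theorem~\ref{th1.1} to the Lipschitz scale by replacing every bound of the form ``$\lesssim r_B^n$'' with ``$\lesssim r_B^{n+2\alpha}$'' and by using the following semigroup characterization of $\Lambda^\alpha_\L(\RR)$ that should be extractable from \cite{BHS2008}: $f\in\Lambda^\alpha_\L(\RR)$ if and only if $f\in L^2((1+|x|)^{-(n+\epsilon)}dx)$, the oscillation estimate
$$
\sup_{B}\Big(|B|^{-1-2\alpha/n}\int_B |f(x)-e^{-r_B^2\L}f(x)|^2\,dx\Big)^{1/2}\leq C
$$
holds for every ball $B=B(x,r_B)$, and the growth bound $\int_B |f|\,dx\leq Cr_B^{n+\alpha}$ holds whenever $r_B\geq\rho(x)$. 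This is the natural $\alpha$-analogue of the characterization \eqref{e3.9} used in the proof of Lemma~\ref{le3.5}.

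For part (2), I would mimic the proof of part (1) of Theorem~\ref{th1.1}. Fix a ball $B=B(x_B,r_B)$ and split $f=f_1+f_2+f_3$ with $f_1=(f-f_{2B})\chi_{2B}$, $f_2=(f-f_{2B})\chi_{(2B)^c}$, $f_3=f_{2B}$. The Lipschitz bound $|f(x)-f_{2B}|\leq C r_B^\alpha\|f\|_{\Lambda^\alpha_\L}$ on $2B$ gives $\|f_1\|_{L^2(2B)}^2\leq C r_B^{n+2\alpha}\|f\|^2_{\Lambda^\alpha_\L}$, and the Riesz-transform/square-function argument of Section~3.1 then bounds the $f_1$-contribution by $Cr_B^{n+2\alpha}\|f\|^2_{\Lambda^\alpha_\L}$. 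For $f_2$, using \eqref{e3.11} together with telescoping over dyadic annuli $2^kB\setminus 2^{k-1}B$ and the Lipschitz condition gives $|\nabla_x e^{-t\L}f_2(x)|\leq C r_B^{\alpha-1}\|f\|_{\Lambda^\alpha_\L}$ for $x\in B$, $t<r_B^2$; integrating in $(x,t)$ yields the correct scaling. For $f_3$, we use $|f_{2B}|\leq C(r_B+\rho(x_B))^\alpha\|f\|_{\Lambda^\alpha_\L}$ together with \eqref{e3.13}, splitting into the cases $r_B\leq\rho(x_B)$ and $r_B>\rho(x_B)$ exactly as in the BMO proof.

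For part (1), I would set $f_k(x)=u(x,1/k)$ and reproduce Lemmas~\ref{le3.1}--\ref{le3.5} with $\alpha$-scaling. Lemmas~\ref{le3.1}, \ref{le3.2}, \ref{le3.4} carry over nearly verbatim: the interior regularity argument \eqref{e3.2} gives $|\partial_t u(x,t)|\leq Ct^{\alpha-1/2}\|u\|_{\mathrm{TMO}^\alpha_\L}$ (by scaling the Lemma~\ref{le2.6} step to the $\alpha$-Carleson condition), and the rest of those proofs is unchanged. The bilinear identity in Lemma~\ref{le3.7} is algebraic and needs no modification. The analogue of Lemma~\ref{le3.6} requires replacing the $L^2$-based atom normalization with an $L^{2n/(n+2)}$-based one scaled by $r_B^\alpha$, so that one obtains
$$
\int_{\Real^{n+1}_+}|F(x,t)G(x,t)|\,\frac{dxdt}{t}\leq C|B|^{1/2}\,r_B^\alpha\,\|u\|_{\mathrm{TMO}^\alpha_\L}\,\|g\|_{L^{2n/(n+2)}(B)}.
$$
The dyadic-annulus decomposition in the proof of Lemma~\ref{le3.6} carries through, with the extra factor $r_B^\alpha$ coming from the $\alpha$-Carleson bound on $\mu_{\nabla_t,f_k}$. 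Combining with Lemma~\ref{le3.7} and the $L^2$-duality argument at the end of the proof of Lemma~\ref{le3.5} then gives the uniform oscillation estimate for $f_k$. The global growth condition \eqref{e4.4} is verified separately using \eqref{e3.3} (which controls $|u(x,1/k)|$ by $\|u\|_{\mathrm{TMO}^\alpha_\L}|x|^{1/2+\alpha}$ after the Lipschitz-adapted version of \eqref{e3.2}) together with a reverse-H\"older-type averaging on balls of radius $\geq\rho(x)$. Once $\{f_k\}$ is uniformly bounded in $\Lambda^\alpha_\L(\RR)$, weak-$\ast$ compactness (with respect to the appropriate atomic predual of $\Lambda^\alpha_\L$) together with Lemma~\ref{le2.7} identifies a limit $f\in\Lambda^\alpha_\L(\RR)$ with $u=e^{-t\L}f$ exactly as in the proof of part~(2) of Theorem~\ref{th1.1}.

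The main obstacle will be the uniform Lipschitz estimate, i.e.\ the $\alpha$-analogue of Lemma~\ref{le3.5}. Two points require care: first, tracking the exponent $\alpha$ through the dyadic-annulus estimate of Lemma~\ref{le3.6} to obtain the correct $r_B^{1/2+\alpha/n}$ scaling; second, controlling the global growth bound \eqref{e4.2}--\eqref{e4.4} uniformly in $k$, for which one cannot directly appeal to weak-$\ast$ compactness because the semigroup characterization through $(I-e^{-r_B^2\L})$ only recaptures the oscillation condition \eqref{e4.3}. The resolution is to pair the $\alpha$-Carleson control with the pointwise bound on $|f_k(x)|$ derived from the interior regularity estimate \eqref{e3.2} and the weighted $L^2$ estimate of Lemma~\ref{le3.1}, and then average over balls of radius $\geq\rho(x)$ to recover \eqref{e4.4} with constants independent of $k$.
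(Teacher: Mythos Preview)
Your outline is workable but takes a considerably longer route than the paper on both parts.

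For part (2), the paper does not re-run the $f_1+f_2+f_3$ decomposition. It instead proves the pointwise bound $\|\sqrt{t}\,\nabla e^{-t\L}f\|_{L^\infty}\leq Ct^{\alpha/2}\|f\|_{\Lambda^\alpha_\L}$ directly (Proposition~\ref{prop4.2}): write
\[
t\nabla e^{-t\L}f(x)=\int_{\RR} t\nabla \K_t(x,z)\big(f(z)-f(x)\big)\,dz + f(x)\cdot t\nabla e^{-t\L}(1)(x),
\]
use $|f(z)-f(x)|\leq C|x-z|^\alpha$ on the first term and $|f(x)|\leq C\rho(x)^\alpha$ together with \eqref{e3.13} on the second. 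Integrating this pointwise bound over $B\times(0,r_B^2)$ gives the $\alpha$-Carleson estimate in one line, with no square-function or Riesz-transform input.

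For part (1), the key simplification you miss is that for $\alpha>0$ the interior estimate self-improves to $|\partial_t u(x,t)|\leq Ct^{(\alpha-1)/2}\|u\|_{\mathrm{TMO}^\alpha_\L}$ (your exponent $\alpha-\tfrac12$ is off by a factor of $2$, though harmlessly), and integrating in $t$ shows that $\{u(x,1/k)\}_k$ is \emph{pointwise Cauchy}. Hence $f_k\to f$ everywhere, and no weak-$\ast$ compactness or atomic predual for $\Lambda^\alpha_\L$ is needed. For the uniform bound $\|f_k\|_{\mathrm{BMO}^\alpha_\L}\leq C\|u\|_{\mathrm{TMO}^\alpha_\L}$ the paper also does not redo Lemmas~\ref{le3.6}--\ref{le3.7} with $\alpha$-scaling: it simply verifies the time-derivative $\alpha$-Carleson condition for $e^{-t\L}f_k$ uniformly in $k$ (the $\alpha$-analogue of Lemma~\ref{le3.4}) and then invokes \cite[Theorem~1.3]{MSTZ}, which already contains the implication ``time-derivative $\alpha$-Carleson $\Rightarrow\mathrm{BMO}^\alpha_\L$''. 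Pointwise convergence plus Fatou then gives $f\in\Lambda^\alpha_\L$ with the required norm bound, and $u=e^{-t\L}f$. Your route would buy self-containment (no appeal to \cite{MSTZ}), at the cost of carrying the bilinear estimate through the $\alpha$-scaling and of the somewhat delicate separate verification of the growth bound \eqref{e4.4} uniformly in $k$; in the paper's approach both of these come for free.
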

Part (2) of Theorem~\ref{th4.1} is a straightforward result from  the following proposition.

\begin{prop}\label{prop4.2} Suppose $V\in B_q$ for some $q\ge n,$
$\alpha\in (0, 1)$ and  $f$ is a function such that
 $$\int_{\Real^n}\frac{\abs{f(x)}}{(1+\abs{x})^{n+\alpha+\varepsilon}}~dx<\infty$$
 for some $\varepsilon>0$. Then the following statements are equivalent:

 \begin{itemize}
\item[(1)]   $f
\in \Lambda^{\alpha}_{\L}(\RR) $;

 \item[(2)]  there exists a constant $C>0$ such that
 \begin{equation*}
\|t\partial_t e^{-t {\L}}f(x) \|_{L^\infty(\Real^n)}\le Ct^{\alpha\over 2};
 \end{equation*}

\item[(3)]   $u(x,t)=e^{-t {\L}}f(x)\in {\rm TMO^{\alpha}_\L}(\Real_+^{n+1})$
 with
 $
 \|u\|_{{\rm TMO^{\alpha}_\L}(\Real_+^{n+1})}\approx \|f\|_{\Lambda^{\alpha}_{\L}(\RR) }.
 $
 \end{itemize}
\end{prop}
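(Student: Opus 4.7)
The plan is to establish the cyclic chain $(1)\Rightarrow (2) \Rightarrow (3) \Rightarrow (1)$, using the kernel estimates of Lemma~\ref{le3.8} and the critical radius function $\rho$ throughout. The first two implications are essentially calculus on top of the kernel bounds; the third is the genuinely hard direction and would parallel Section~3.2 of this paper, recalibrated from the BMO to the Lipschitz scaling.

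For $(1)\Rightarrow(2)$, I would use the splitting
\begin{equation*}
\nabla_x e^{-t\L}f(x) = \int_{\RR}\nabla_x\K_t(x,y)\bigl[f(y)-f(x)\bigr]dy + f(x)\,\nabla_x e^{-t\L}(1)(x),
\end{equation*}
and similarly for $\partial_t$. The first integral is handled by the H\"older condition $|f(y)-f(x)| \leq \|f\|_{\Lambda^\alpha_\L}|x-y|^\alpha$ together with the Gaussian bound \eqref{e3.11} on $\nabla_x\K_t$, producing a factor $t^{(\alpha-1)/2}$. For the second term I would combine $|f(x)|\leq \|f\|_{\Lambda^\alpha_\L}\rho(x)^\alpha$ with the two-regime estimate \eqref{e3.13}: when $\sqrt{t}\leq \rho(x)$ the factor $(\sqrt t/\rho(x))^\delta$ is used with $\delta>\alpha$, while when $\sqrt{t}>\rho(x)$ the $(\sqrt t/\rho(x))^{-N}$ decay absorbs the growth $\rho(x)^\alpha$; in both cases the resulting bound matches $t^{(\alpha-1)/2}$.

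For $(2)\Rightarrow(3)$, I would simply integrate: the pointwise bound gives
\begin{equation*}
\int_0^{r_B^2}\!\!\int_B |\nabla_x u|^2\,dxdt \leq C|B|\int_0^{r_B^2} t^{\alpha-1}\,dt \leq C\,r_B^{n+2\alpha}\|f\|_{\Lambda^\alpha_\L}^2,
\end{equation*}
which is the TMO$^\alpha_\L$ estimate on the spatial gradient; for the $\partial_t u$ component of the TMO$^\alpha_\L$ norm I would use the equation $\partial_t u = \Delta u - Vu$ together with a parabolic Caccioppoli estimate (as in Lemma 2.6 and \cite{GJ}) to transfer the $L^2$ control of $\nabla_x u$ on a slightly enlarged cube into $L^2$ control of $\partial_t u$ on $B\times(0,r_B^2)$, absorbing the $Vu$ term via \eqref{e2.2}. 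This detour is needed because the naive pointwise bound on $\partial_t u$ alone is of order $t^{(\alpha-2)/2}$, which is not integrable near $t=0$ for $\alpha<1$.

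For $(3)\Rightarrow(1)$, the plan is to follow the architecture of Section~3.2. Setting $f_k(x)=u(x,1/k)$, one first verifies as in Lemmas~\ref{le3.2} and \ref{le3.4} that $u(\cdot,t+1/k)=e^{-t\L}f_k$ and that each $\mu_{\nabla_t,f_k}$ satisfies the Carleson-type bound $\interleave \mu_{\nabla_t,f_k}\interleave \leq C\|u\|_{{\rm TMO}^\alpha_\L}$ after rescaling by $r_B^{-(n+2\alpha)}$. Then I would establish a uniform $\Lambda^\alpha_\L$ bound for $\{f_k\}$ via the Campanato-type characterization
\begin{equation*}
\|g\|_{\Lambda^\alpha_\L}\sim \sup_B r_B^{-\alpha}\left(|B|^{-1}\int_B|g-e^{-r_B^2\L}g|^2dx\right)^{1/2} + (\text{pointwise bound when } r_B\geq\rho(x_B)),
\end{equation*}
obtaining the right-hand side from the $\alpha$-weighted analogues of Lemmas~\ref{le3.6} and \ref{le3.7} (the integration-by-parts identity carries over verbatim, and the off-diagonal estimate is redone with $r_B^{n+2\alpha}$ in place of $r_B^n$). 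A weak-$\ast$ compactness argument then extracts a limit $f\in\Lambda^\alpha_\L$, and Lemma~\ref{le2.7} identifies $u=e^{-t\L}f$ as in the proof of part~(2) of Theorem~\ref{th1.1}. The main obstacle is precisely this last direction: the space $\Lambda^\alpha_\L$ does not come with the clean duality $H^1_\L=\bigl({\rm BMO}_\L\bigr)_*$ that was used in Section~3, so one must work directly with the Campanato norm and carry out the $\alpha$-weighted $L^{2n/(n+2-2\alpha)}$--$L^2$ square-function bookkeeping by hand; a secondary delicacy is the Caccioppoli step in $(2)\Rightarrow(3)$ for the $\partial_t u$ part of the norm.
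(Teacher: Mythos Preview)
Your $(1)\Rightarrow(2)$ matches the paper's argument exactly: the same splitting into the oscillation integral and the $f(x)\nabla e^{-t\L}(1)$ term, with the two regimes $\sqrt t\lessgtr\rho(x)$ handled via Lemma~\ref{le3.8}.

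For $(2)\Rightarrow(3)$ you over-complicate. In the paper's convention $\nabla=(\nabla_x,\partial_t)$, so condition (2) already asserts $|\partial_t u|\le Ct^{(\alpha-1)/2}$, and the paper simply integrates $|\nabla u|^2\le Ct^{\alpha-1}$ over $B\times(0,r_B^2)$ to obtain $Cr_B^{n+2\alpha}$; no Caccioppoli step is invoked. Your worry about a $t^{(\alpha-2)/2}$ blow-up would only arise if (2) were restricted to $\nabla_x$ --- but then you would equally owe the $\partial_t$-half of (2) in the step $(1)\Rightarrow(2)$, which you did not address.

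The main divergence is in $(3)\Rightarrow(1)$. The paper disposes of this in a single line by invoking \cite[Theorem~1.3]{MSTZ}, which is precisely the Carleson-measure characterization of $\Lambda^\alpha_\L$ in terms of $t\partial_t e^{-t\L}f$. Your plan instead re-runs the entire Section~3.2 architecture --- the reproducing identity, $\alpha$-weighted off-diagonal square-function bounds, and a weak-$*$ compactness argument to extract a limit $f$. This last step is misdirected: in Proposition~\ref{prop4.2} the function $f$ is \emph{given} from the outset (condition (3) reads ``$u(x,t)=e^{-t\L}f(x)\in\mathrm{TMO}^\alpha_\L$''), so there is nothing to extract; the task is only to show that this fixed $f$ lies in $\Lambda^\alpha_\L$. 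You are conflating Proposition~\ref{prop4.2}(3)$\Rightarrow$(1) with Theorem~\ref{th4.1}(1), where $u$ is an abstract $\mathbb L$-caloric $\mathrm{TMO}^\alpha_\L$ function and one must first produce $f$. Your Campanato route could be made to work once the extraction is dropped, but it is far longer than the paper's one-line citation.
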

\begin{proof}
{$\rm (1)\Rightarrow (2)$.}
Let $f\in \Lambda^{\alpha}_{\L}(\RR)$. One writes
\begin{align*}
   t\partial_t e^{-t {\L}}f(x)  &=  \int_{\Real^n}t\partial_t {\mathcal K}_t(x,z)\left(f(z)-f(x)\right)~dz
    +f(x)
	t\partial_t e^{-t {\L}}(1)(x)  \\
    & =  I(x)+II(x).
\end{align*}
From Lemma~\ref{le2.2}, we have
 \begin{eqnarray*}
 |I(x)|&\leq & C\|f\|_{\Lambda^{\alpha}_{\L}(\RR)} \int_{\Real^n}|t\partial_t {\mathcal K}_t(x,z)|\abs{x-z}^\alpha
     dz\\
     &\le&C\|f\|_{\Lambda^{\alpha}_{\L}(\RR)}
	 \int_{\Real^n}\frac{t}{t^{n+2\over 2}}e^{{-\frac{\abs{x-z}^2}{ct}}}\abs{x-z}^\alpha~dz\\
	 &\leq& C\|f\|_{\Lambda^{\alpha}_{\L}(\RR)}
	 \int_{\Real^n}\frac{t\abs{x-z}^\alpha}{\left(\sqrt t+\abs{x-z}\right)^{n+2}}~dz\\
 &\leq& Ct^{\alpha\over 2}\|f\|_{\Lambda^{\alpha}_{\L}(\RR)}.
\end{eqnarray*}
To estimate the term $II(x)$, we  consider two
cases.

\smallskip

\noindent
{\it Case 1:}   $\rho(x)\leq \sqrt t$. In this case we use Lemma~\ref{le2.2} (iii) to obtain
  \begin{eqnarray*}
 |II(x)|&\leq & \|f\|_{\Lambda^{\alpha}_{\L}(\RR)} \rho(x)^\alpha\big|t\partial_t e^{-t{\L}}(1)(x)\big|\\
 &\leq& Ct^{\alpha\over 2}\|f\|_{\Lambda^{\alpha}_{\L}(\RR)}.
\end{eqnarray*}

\smallskip

\noindent
{\it Case 2:}   $\rho(x)> \sqrt t$. Since  $\alpha<2-n/q,$ by  Lemma~\ref{le2.2} (iii),
 \begin{eqnarray*}
  |II(x)|&\leq&
C\|f\|_{\Lambda^{\alpha}_{\L}(\RR)}\rho(x)^\alpha \Big({\sqrt t\over \rho(x)}\Big)^{2-n/q}\\
&\leq&
C\|f\|_{\Lambda^{\alpha}_{\L}(\RR)}\rho(x)^\alpha\Big({\sqrt t\over \rho(x)}\Big)^{\alpha}\\
&=&
Ct^{\alpha\over 2}\|f\|_{\Lambda^{\alpha}_{\L}(\RR)},
\end{eqnarray*}
which, together with estimate of $I(x)$, yields
$ \|t\partial_t e^{-t {\L}}f(x) \|_{L^\infty(\Real^n)}\le Ct^{\alpha\over 2}\|f\|_{\Lambda^{\alpha}_{\L}(\RR)}.
$

\smallskip

{$\rm (2)\Rightarrow (3)$.}
For every ball $B=B(x_B,r_B)$,  we have
\begin{align*}
 \int_0^{r_B^2}\int_B  | t\partial_t e^{-t {\L}}f(x)|^2 {dx~dt\over t}
 &\leq C\|f\|^2_{\Lambda^{\alpha}_{\L}(\RR)}
\int_0^{r_B^2}\int_B t^{\alpha-1}~ {dt~dx}\\
&\leq Cr_B^{n+2\alpha}\|f\|^2_{\Lambda^{\alpha}_{\L}(\RR)},
\end{align*}
and hence   $u(x,t)=e^{-t {\L}}f(x)\in {\rm TMO_\L^\alpha}(\Real_+^{n+1}).$

\smallskip

 The proof of {$\rm (3)\Rightarrow (1)$} is a direct consequence of    \cite[Theorem 1.3]{MSTZ}.
 This completes the proof.
 \end{proof}

To prove part (1) of Theorem~\ref{th4.1}, we need some preliminary results.

\begin{lem}\label{le4.3}
Let $\alpha\in (0, 1).$
 For every  $u\in {\rm TMO}^{\alpha}_{\L}(\Real_+^{n+1})$ and
  every $k\in{\mathbb N}$, there exists a constant $C_{k, \alpha}>0$ such that
\begin{equation*}
\int_{\RR}{|u(x,{1/k})|^2\over 1+|x|^{2n+1}}  dx\leq C_{k, \alpha} <\infty,
\end{equation*}
and so $u(x, 1/k)\in L^2((1+|x|)^{-(2n+1)}dx)$. Hence for  all $k\in{\mathbb N}$, $e^{-t{\L}}(u(\cdot, {1/k}))(x)$ exists
everywhere in ${\mathbb R}^{n+1}_+$.
\end{lem}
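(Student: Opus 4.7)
My plan is to mimic the structure of Lemma~\ref{le3.1} with all exponents recalibrated for the Lipschitz order $\alpha \in (0,1)$. The stronger Carleson condition \eqref{e4.5} carries an extra factor $r_B^{2\alpha}$ compared with \eqref{e1.8}, which enters the pointwise gradient bound; to compensate for the resulting polynomial growth of $u(\cdot, 1/k)$, I will work with the weight $(1+|x|)^{-(2n+1)}$ instead of $(1+|x|)^{-2n}$.

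The first step is to derive the $\alpha$-analog of \eqref{e3.2}. Since $V$ is $t$-independent, $\partial_t u$ is also a weak solution of $\mathbb{L}v = 0$, so Lemma~\ref{le2.6} applied to $\partial_t u$ with parabolic radius $r_0 = \sqrt{t}$, combined with \eqref{e4.5} on the ball $B(x,\sqrt{t})$, yields
\[
|\partial_t u(x,t)| \le C\, t^{(\alpha - 1)/2}\, \|u\|_{{\rm TMO^\alpha_\L}(\Real_+^{n+1})}.
\]
I would then split $u(x, 1/k) - u(x/|x|, 1/k) = I(x) + II(x) + III(x)$ exactly as in Lemma~\ref{le3.1}. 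Integrating the pointwise bound from $1/k$ to $|x|$ gives $|I(x)|, |III(x)| \le C|x|^{(\alpha+1)/2}$ for $|x| \ge 1$, so their contributions to $\int |u(x,1/k)|^2 (1+|x|^{2n+1})^{-1} dx$ over the annulus $\{2^{m-1} < |x| \le 2^m\}$ are $O(2^{m(\alpha - n)})$, summable for $n \ge 3$.

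The heart of the argument lies in $II(x)$. Writing $x = \rho\omega$ with $\omega \in S^{n-1}$ and $II(x) = \int_1^\rho \partial_r u(r\omega, \rho)\,dr$, I would apply Cauchy--Schwarz with the weight $r^{-(n-1)/2}$:
\[
|II(x)|^2 \le \Bigl(\int_1^\rho r^{-(n-1)} dr\Bigr)\int_1^\rho r^{n-1} |\partial_r u(r\omega,\rho)|^2 dr \le C \int_1^\rho r^{n-1} |\partial_r u|^2 dr,
\]
the first factor being bounded because $n \ge 3$. Converting $r^{n-1}\,dr\,d\omega = dy$, integrating over $\omega$ and $\rho \in (2^{m-1}, 2^m]$ with Jacobian $\rho^{n-1} \le 2^{m(n-1)}$, and applying \eqref{e4.5} on $B(0, 2^m)$ yields
\[
\int_{\{2^{m-1} < |x| \le 2^m\}} |II(x)|^2 dx \le C\, 2^{m(2n - 1 + 2\alpha)} \|u\|^2_{{\rm TMO^\alpha_\L}(\Real_+^{n+1})},
\]
whose contribution against $(1+|x|^{2n+1})^{-1}$ is $O(2^{-m(2 - 2\alpha)})$, summable because $\alpha < 1$.

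Combining these three annular estimates with the trivial $L^\infty$ bounds of $u(\cdot, 1/k)$ on $\{|x| \le 1\}$ and of $u(x/|x|, 1/k)$ on $S^{n-1}$ (both immediate from $u \in C^1(\Real_+^{n+1})$) produces $\int |u(x, 1/k)|^2 (1+|x|^{2n+1})^{-1} dx \le C_{k,\alpha}$. The existence of $e^{-t\L}(u(\cdot, 1/k))(x)$ everywhere is then immediate from Lemma~\ref{le2.2}: the kernel $\K_t(x, \cdot)$ decays faster than any polynomial in $|x-y|$, so by Cauchy--Schwarz it integrates against any element of $L^2((1+|y|^{2n+1})^{-1} dy)$. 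The main obstacle is the bookkeeping of exponents: the weight $(2n+1)$ must be just heavy enough to dominate the endpoint growth $|x|^{(\alpha+1)/2}$ from $I$ and $III$, yet light enough for the spherical estimate of $II$ to remain summable for every $\alpha \in (0,1)$; the latter forces the $r^{-(n-1)/2}$-weighted Cauchy--Schwarz, since the naive version would lose an extra factor of $2^m$ in $II$ and only permit $\alpha < 1/2$.
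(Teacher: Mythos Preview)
Your proposal is correct and follows essentially the same route as the paper. The paper itself does not spell out the proof of Lemma~\ref{le4.3}, stating only that it is ``analogous to that of Lemma~\ref{le3.1}''; your argument is exactly this analogy, with the pointwise bound $|\partial_t u(x,t)|\le C t^{(\alpha-1)/2}\|u\|_{{\rm TMO}^\alpha_\L}$ replacing \eqref{e3.2} and the weight $(1+|x|)^{-(2n+1)}$ replacing $(1+|x|)^{-2n}$. Your $r^{-(n-1)/2}$-weighted Cauchy--Schwarz for the term $II$ is precisely what produces the factor $2^{m(n-1)}=2^{mn-m}$ appearing in the paper's treatment of $II$ in Lemma~\ref{le3.1}, and your observation that the naive Cauchy--Schwarz would only yield $\alpha<1/2$ correctly identifies why this weighting is needed.
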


\begin{lem} \label{le4.4} Let $\alpha\in (0, 1).$ For every  $u\in {\rm TMO}^{\alpha}_{\L}(\Real_+^{n+1})$ and for every $k\in{\mathbb N}$,
there exists a constant $C>0$ independent of $k$ such that
\begin{eqnarray*}
\|u(\cdot, 1/k)\|_{{\rm BMO}^{\alpha}_{\L}(\RR)}
&\leq& C\|u\|_{{\rm TMO}^{\alpha}_{\L}(\Real_+^{n+1})}.
\end{eqnarray*}
Hence for  all $k\in{\mathbb N}$, $u(x, 1/k)$ is uniformly bounded in  ${\rm BMO}^{\alpha}_{\L}(\RR)$.
\end{lem}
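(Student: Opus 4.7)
The plan is to mirror the proof of Lemma~\ref{le3.5} after setting $f_k(x)=u(x,1/k)$, replacing every scale-$n$ quantity by its $\alpha$-weighted analogue so that the correct ${\rm BMO}^\alpha_\L$ norm emerges. By Lemma~\ref{le4.3} we have $f_k\in L^2((1+|x|)^{-(2n+1)}dx)$, so $e^{-t\L}f_k$ is well-defined and the identity $u(x,t+1/k)=e^{-t\L}f_k(x)$ from Lemma~\ref{le3.2} carries over verbatim to this setting. The first step is to establish an $\alpha$-weighted analogue of the equivalent characterization~\eqref{e3.9}, namely
$$\|f\|_{{\rm BMO}^\alpha_\L(\RR)}\approx \sup_B r_B^{-\alpha}\Bigl(|B|^{-1}\int_B|f(x)-e^{-r_B^2\L}f(x)|^2dx\Bigr)^{1/2}$$
for $f\in L^2((1+|x|)^{-(2n+1)}dx)$. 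This should follow, in the spirit of Proposition~6.11 of \cite{DY2}, from the identification ${\rm BMO}^\alpha_\L=\Lambda^\alpha_\L$ of \cite{BHS2008} together with Lemmas~\ref{le2.2}, \ref{le2.3} and \ref{le3.8}(iii). At the same time, repeating the proof of Lemma~\ref{le3.4} with normalization changed from $r_B^{-n}$ to $r_B^{-(n+2\alpha)}$ yields the uniform-in-$k$ bound
$$\interleave \mu_{\nabla_t,f_k}\interleave^2_{2car,\alpha}:=\sup_B r_B^{-(n+2\alpha)}\int_0^{r_B^2}\int_B|\partial_te^{-t\L}f_k(x)|^2dxdt\leq C\|u\|^2_{{\rm TMO}^\alpha_\L(\Real_+^{n+1})}.$$

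Next I would prove the $\alpha$-analogue of Lemma~\ref{le3.6}. With $F,G$ defined as in~\eqref{e3.7}, the same annular decomposition into tents over $2^kB$ and the same use of the heat-kernel bound~\eqref{eq1} should produce
$$\int_{{\mathbb R}^{n+1}_+}|F(x,t)G(x,t)|\frac{dxdt}{t}\leq Cr_B^{n/2+\alpha}\interleave\mu_{\nabla_t,f_k}\interleave_{2car,\alpha}\|g\|_{L^{2n/(n+2)}(B)}$$
for any $g$ supported in $B$, the extra factor $r_B^\alpha$ emerging precisely from the change of Carleson normalization. Since the Calderón-type identity of Lemma~\ref{le3.7} is $\alpha$-independent, combining it with the above Carleson estimate and the $L^2$-duality argument from the proof of Lemma~\ref{le3.5} gives
$$r_B^{-\alpha}\Bigl(|B|^{-1}\int_B|f_k-e^{-r_B^2\L}f_k|^2dx\Bigr)^{1/2}\leq C\|u\|_{{\rm TMO}^\alpha_\L(\Real_+^{n+1})},$$
which by the equivalent characterization above completes the proof.

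The main obstacle will be the $\alpha$-weighted version of Lemma~\ref{le3.6}: one must verify that the extra $r_B^\alpha$ factor propagates correctly through the local term, where the $L^{2n/(n+2)}$--$L^2$ boundedness of $\L^{-1/2}$ is exploited, and through the geometric sum in the far-field annular estimate, without spoiling the convergence. A secondary but routine point is the verification of the $\alpha$-weighted equivalent characterization of ${\rm BMO}^\alpha_\L$ via $\|f-e^{-r_B^2\L}f\|_{L^2(B)}$. Once these two ingredients are in place, the rest of the argument is a verbatim $\alpha$-rescaled copy of the proof of Lemma~\ref{le3.5}.
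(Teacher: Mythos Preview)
Your approach is correct, but it takes a different route from the paper. After establishing the representation $u(x,t+1/k)=e^{-t\L}f_k(x)$ and the $\alpha$-weighted Carleson bound \eqref{e4.6} (your $\interleave\mu_{\nabla_t,f_k}\interleave_{2car,\alpha}$ estimate), the paper simply invokes \cite[Theorem~1.3]{MSTZ}, which already contains the implication ``$\alpha$-weighted Carleson condition for $\partial_te^{-t\L}f$ $\Rightarrow$ $f\in{\rm BMO}^\alpha_\L$'' as a black box. You instead rerun the entire machinery of Lemmas~\ref{le3.5}--\ref{le3.7} with the extra $r_B^\alpha$ weight inserted, and then appeal to an oscillation characterization of ${\rm BMO}^\alpha_\L$ in the style of \eqref{e3.9}. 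Your route is longer but more self-contained, and your check that the far-field geometric series in the analogue of Lemma~\ref{le3.6} still converges (the summand becomes $2^{(\alpha-2)k}$, harmless since $\alpha<1$) is exactly the point that needs verifying. The paper's route is shorter precisely because \cite{MSTZ} has already carried out this verification; in effect you are reproving the relevant half of that theorem.
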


The proof of Lemma~\ref{le4.3} is   analogous to that  of Lemma~\ref{le3.1}. For Lemma~\ref{le4.4},
similar arguments as in Lemmas~\ref{le3.2} and \ref{le3.4} show  that
$u(x,t+1/k)=e^{-t{\L}}\big( u(\cdot, 1/k)\big)(x)$ satisfies
\begin{eqnarray}\label{e4.6} \hspace{1cm}
\sup_{x_B, r_B}     r_B^{-(n+2\alpha)} \int_0^{r_B^2}\int_{B(x_B, r_B)}  |t \partial_t e^{-t{\L}}\big( u(\cdot, 1/k)\big)(x) |^2
{dx dt\over t } \leq C\|u\|^2_{{\rm TMO}^{\alpha}_{\L}(\Real_+^{n+1})}
\end{eqnarray}
 for all $k\in{\mathbb N}.$
This,  together with  \cite[Theorem 1.3]{MSTZ}, shows that $u(x, 1/k)\in {\rm BMO}^{\alpha}_{\L}(\RR)$, and then
 the norm of $u(x, 1/k)$ in  the space ${\rm BMO}^{\alpha}_{\L}(\RR)$ is less than  the left hand side of \eqref{e4.6}.
 We omit the detail here.

 \smallskip

 \begin{proof}[Proof of part   (1)  Theorem~\ref{th4.1}] By  using Lemma~\ref{le2.6} for   $\partial_{t}u(x, t+{1/k})$ and
 a similar argument as in \eqref{e3.2}  we show that
  \begin{equation*}
 | \partial_{t}u(x, t) | \leq C t^{\alpha-1}  \|u\|_{{\rm TMO}^{\alpha}_{\L}(\Real_+^{n+1})},
  \end{equation*}
and hence if $t_1, t_2>0$,
 \begin{align*}
 |u(x, t_1)-u(x, t_2)|&=\abs{\int_{t_2}^{t_1}\partial_s u(x, s)ds}
 \leq C \|u\|_{{\rm TMO}^{\alpha}_{\L}(\Real_+^{n+1})}
 \abs{\int_{t_2}^{t_1}s^{\alpha-1}ds}\\
 &\leq C|t_1^{\alpha}-t_2^{\alpha}|
  \leq C|t_1-t_2|^{\alpha}
\end{align*}
since $0<\alpha<1.$ The family $\{u(x, t)\}$ is a Cauchy sequence as $t$ tends to zero and hence converges to some function $f(x)$
everywhere.

Now we apply Lemma~\ref{le4.4}, and note that for all $k\in{\mathbb N}$,
\begin{equation*}
\|u(\cdot, 1/k)\|_{{\rm BMO}^{\alpha}_{\L}(\RR)}\leq  C\|u\|_{{\rm TMO}^{\alpha}_{\L}(\Real_+^{n+1})}<\infty.
\end{equation*}
Letting $k$ tend to $\infty$,    we conclude
\begin{equation*}
\|f\|_{\Lambda^{\alpha}_{\L}(\RR)}\leq C
\|f\|_{{\rm BMO}^{\alpha}_{\L}(\RR)} \leq C\|u\|_{{\rm TMO}^{\alpha}_{\L}(\Real_+^{n+1})},
\end{equation*}
and hence
 $u(x,t)=e^{-t{\L}}f(x).$
This completes the proof of part (1) of Theorem~\ref{th4.1}.
 \end{proof}

\bigskip

{\bf Acknowledgments.}  C. Zhang was partly supported by the Natural Science Foundation of Zhejiang Province (Grant No. LY18A010006), the first Class Discipline of Zhejiang - A (Zhejiang Gongshang University- Statistics), and the National Natural Science Foundation of China (Grant No. 11401525). M. Yang was partly supported by the National Natural Science Foundation of China (Grant No. 11801236), Postdoctoral Science Foundation of China (Grant No. 2018M632593), Natural Science Foundation of Jiangxi Province for Young Scholars (Grant No. 20181BAB211001), the Postdoctoral Science Foundation of Jiangxi Province (Grant No. 2017KY23) and Educational Commission Science Programm of Jiangxi Province (Grant No. GJJ170345).

The authors would like to thank Dr. Huang Qiang for helpful discussions.


 \bigskip




\begin{thebibliography}{10}



 \bibitem{BHS2008} B. Bongioanni, E. Harboure and O. Salinas,
 {Weighted inequalities for negative powers of Schr\"odinger operators}.
 \textit{J. Math. Anal. Appl.}
 \textbf{348} (2008), 12--27.

  \bibitem{BHS2009} B. Bongioanni, E. Harboure and O. Salinas,
 {Riesz transforms related to  Schr\"odinger operators acting on BMO type spaces}.
 \textit{J. Math. Anal. Appl.} \textbf{357}(2009), 115--131.


 \bibitem{CMS} R. Coifman, Y. Meyer and E. Stein,
  Some new functions and their applications to harmonic analysis.
  {\it J. Funct. Analysis}, {\bf 62}(1985), 304--335.


 \bibitem {DKP} M. Dindos, C. Kenig and J. Pipher,
  BMO solvability and the $A_{\infty}$ condition for elliptic operators. \textit{J. Geom. Anal. } \textbf{21} (2011), 78--95.




\bibitem{DXY}X. Duong, J. Xiao and L. Yan, Old and new Morrey spaces with heat kernel bounds.
\textit{J. Fourier Anal. Appl.} \textbf{13} (2007), 87--111.

\bibitem   {DY1}  X. Duong and L. Yan, New function spaces of
{\rm BMO} type, the John-Nirenberg inequality, interpolation and applications,
{\it  Comm. Pure Appl. Math.} {\bf 58} (2005), 1375--1420.


\bibitem {DY2} X. Duong and L. Yan, Duality of Hardy and BMO spaces
associated with operators with heat kernel bounds. {\it J. Amer. Math. Soc.}
{\bf 18}(2005), 943--973.

\bibitem {DYZ}X. Duong, L. Yan and C. Zhang,
On characterization of Poisson integrals of Schr\"odinger operators with BMO traces. {\it J. Funct. Anal.} {\bf 266}(2014),  2053-2085.

\bibitem{DGMTZ} J. Dziuba\'nski, G. Garrig\'os, T. Mart\'inez, J.  Torrea and J. Zienkiewicz,
{$BMO$ spaces related to Schr\"odinger operators with potentials satisfying a reverse H\"older inequality}.
\textit{Math. Z.}
\textbf{249} (2005), 329--356.


\bibitem{DZ1999} J. Dziuba\'nski and J. Zienkiewicz, {Hardy space $H^1$ associated to Schr\"odinger operator with
 potential satisfying reverse H\"older inequality}.
\textit{Rev. Mat. Iberoamericana} \textbf{15} (1999), 279--296.

\bibitem{DZ2002} J. Dziuba\'nski and J. Zienkiewicz,
{$H^p$ spaces for Schr\"odinger operators}, in: Fourier Analysis and Related Topics \textbf{56}, Banach Center Publ., Inst. Math., Polish Acad. Sci.,
Warszawa, 2002, 45--53.


\bibitem{FJN}  E. Fabes, R. Johnson and U. Neri,
Spaces of harmonic functions representable by Poisson integrals of functions in BMO and $L_{p, \lambda}$.
\textit{ Indiana Univ. Math. J.}
\textbf{ 25} (1976), 159--170.


\bibitem{FN1} E. Fabes and U. Neri,
 Characterization of temperatures with initial data in BMO. \textit{ Duke Math. J. }
\textbf{42} (1975),  725-734.

\bibitem{FN} E. Fabes and U. Neri,   Dirichlet problem in Lipschitz domains with BMO data.
\textit{Proc. Amer. Math. Soc.}
\textbf{78} (1980), 33--39.



\bibitem{FS} C. Fefferman and E. Stein,   $H^p$ spaces of
 several variables. {\it Acta
Math.} {\bf 129} (1972), 137--195.



\bibitem{GJ} W. Gao and Y. Jiang, $L_p$   estimate for parabolic Schr\"odinger operator with certain potentials. \textit{J. Math. Anal. Appl.}  \textbf{310}  (2005), 128--143.

\bibitem{Ge} F. Gehring, The $L^p$-integrability of the partial derivatives of a quasiconformal mapping.
{\it Acta Math.},  {\bf 130} (1973), 265--277.





\bibitem  {HLMMY} S. Hofmann, G. Lu, D. Mitrea, M. Mitrea and L. Yan,
 Hardy spaces associated to non-negative
self-adjoint  operators satisfying Davies-Gaffney estimates.
{\it Memoirs of the Amer. Math. Soc.}, {\bf 214} (2011), no. 1007.


\bibitem{HMM} S. Hofmann, J. Martell and S. Mayboroda,  Layer potentials and boundary value problems for elliptic equations with complex $L^\infty$
   coefficients satisfying the small Carleson measure norm condition, \textit{Adv. Math.} \textbf{270} (2015), 480--564.



\bibitem{JX} R. Jiang, J. Xiao and D. Yang, Towards spaces of harmonic functions with traces in square Campanato spaces and their scaling invariants, \textit{Anal. Appl. (Singap.)} \textbf{14} (2016), 679--703.



\bibitem{Kurata} K. Kurata,
{An estimate on the heat kernel of magnetic Schr\"odinger operators and uniformly elliptic operators with non-negative potentials},
\textit{J. London Math. Soc. }
\textbf{62} (2000), 885--903.

\bibitem{MSTZ} T.  Ma, P.  Stinga, J.  Torrea and C. Zhang,
{Regularity properties of Schr\"odinger operators},
\textit{J. Math. Anal. Appl.}
\textbf{388} (2012), 817--837.


\bibitem{MSTZ2} T.  Ma, P.  Stinga, J.  Torrea and C. Zhang,
{Regularity estimates in H\"older spaces for Schr\"odinger operators via a T1  theorem},
\textit{Ann. Mat. Pura Appl.}
\textbf{193} (2014), 561--589.







\bibitem{Shen} Z. Shen,
{$L^p$ estimates for Schr\"odinger operators with certain potentials}.
\textit{Ann. Inst. Fourier (Grenoble)}
\textbf{45} (1995), 513--546.


\bibitem{Shen1999} Z. Shen,   On fundamental solution of generalized Schr\"odinger
operators.  {\it J. Funct. Anal.}   {\bf 167}
(1999),  521--564.

\bibitem{Song} L. Song, X. Tian and L. Yan, On characterization of Poisson integrals of Schr\"odinger operators with Morry traces, {\it Acta Math. Sin. (Engl. Ser.)}   {\bf 34} (2018),  787--800.


\bibitem{St1970} E. Stein,
\textit{Topics in Harmonic Analysis Related to the Littlewood-Paley Theory},
{Annals of Mathematics Studies} \textbf{63},
Princeton Univ. Press,
Princeton, NJ, 1970.




\bibitem{SW} E. Stein and G. Weiss,
\textit{Introduction to Fourier Analysis on Euclidean spaces},
Princeton Univ. Press,
Princeton, NJ, 1970.



\bibitem{TH} L. Tang and J. Han, $L_p$ boundedness for parabolic Schr\"odinger type operators with certain nonnegative potentials,
{\it Forum Math.} {\bf 23} (2011), 161--179.


\bibitem{WY} L. Wu and L. Yan, Heat kernels, upper bounds and Hardy spaces associated to the generalized Schr\"odinger operators, {\it J. Funct.
Anal.} {\bf 270} (2016), 3709-3749.

 \bibitem{YSY} W. Yuan, W. Sickel and D. Yang,
\textit{Morrey and Campanato meet Besov, Lizorkin and Triebel},
Lecture Notes in Mathematics, 2005. Springer-Verlag, Berlin, 2010.

  .



\end{thebibliography}
\end{document}